\newcommand\dela[1]{\Green{{\small{14.12.2009}}}}
\newcommand\Rng{\mathrm{Rng}}
\theoremstyle{plain}
\newtheorem{theorem}{Theorem}[section]
\newtheorem{proposition}[theorem]{Proposition}
\newtheorem{defi}[theorem]{Definition}
\newtheorem{lem}[theorem]{Lemma}
\newtheorem{remark}{Remark}[section]
\newtheorem{acknowledgements}{Acknowledgements}
\newtheorem{assu}{Assumption}[section]
\newcommand\E{{\mathbb E}}
\newcommand\mE{{\mathscr{E}}}
\newcommand\cadlag{c\`{a}dl\`{a}g}
\newcommand\N{{\mathbb N}}
\newcommand\R{{\mathbb R}}
\numberwithin{equation}{section}
\title{{\bf  Stochastic nonlinear beam equations driven by compensated Poisson random measures} }
\author{{\bf   Zdzis{\l}aw Brze\'{z}niak$^{a}$ and
 Jiahui Zhu$^{a}$
}\\
{\footnotesize $a.$ Department of Mathematics, University of York, Heslington, York, YO10 5DD, UK}\\
}
\date{}
\begin{document}
\maketitle

\begin{abstract}
We consider a type of stochastic nonlinear beam
equation driven by L\'{e}vy noise. By using a suitable Lyapunov
function and applying the  Khasminskii test we show the nonexplosion of
the mild solutions. In addition, under some additional assumptions we prove the exponential stability of the solutions.

\smallskip

\end{abstract}
\section{Introduction and Motivation}

 The Euler-Bernoulli beam equation
         \begin{align*}
           EI\frac{d^4u}{dx^4}=w
         \end{align*}
as a simplification of linear beam theory was first introduced in
1750 to describe the relationship between the deflection and applied
load. The transversal deflection $u$ of a hinged extensible beam of length
$l$ under an axial force $H$ which satisfies the following form

\begin{align}\label{Woinowsky-Krieger}
     \frac{\partial^2u}{\partial t^2}+\frac{EI}{\rho}\frac{\partial ^4 u}{\partial
     x^4}=\left(\frac{H}{\rho}+\frac{EA}{2\rho l}\int_0^l\left(\frac{\partial u}{\partial
     x}\right)^2dx\right)\frac{\partial^2u}{\partial x^2}.
\end{align}
was studied by S. Woinowsky-Krieger \cite{[Woinowsky-Krieger]}.
See also Eisley \cite{[Eisley]} and Burgreen \cite{[Burgreen]} for
more details.
Chueshov \cite{[Chueshov]} considered a problem of the following form
\begin{align*}
   u_{tt}+\gamma u_t+A^2 u+m(\|A^{\frac{1}{2}}u\|^2)Au+Lu=p(t)
\end{align*}
which arises in the nonlinear theory of oscillations of a plate in a
supersonic gas flow moving along an $x_1$-axis described by
\begin{align*}
   \frac{\partial ^2 u}{\partial t^2}+\gamma \frac{\partial u}{\partial
   t}+\triangle^2u+\left(\alpha-\int_D|\triangledown
   u|^2dx\right)\triangle u+\rho\frac{\partial u}{\partial
   x_1}=p(x,t),\ \ x\in(x_1,x_2)\subset D,
\end{align*}
where $u(x,t)$ measures the plate deflection at the point $x$ and
the moment $t$, $\gamma>0$, $\rho\geq0$ and function $p(x,t)$
describles the transverse load on the plate.
 In \cite{[Patcheu]} Patcheu
considered a model of \eqref{Woinowsky-Krieger} with a nonlinear
friction force. The existence and uniqueness of global solutions of
a nonlinear version of the Euler-Bernoulli with white noise arising
from vibration of an aeroelastic panels
\begin{align}\label{sde-1}
    \frac{\partial^2 u}{\partial t^2}-\left(a+b\int_0^{l}\left(\frac{\partial u}{\partial x}\right)^2dx\right)\frac{\partial^2 u}{\partial x^2}&+\gamma\frac{\partial^4u}{\partial x^4}+f\left(t,x,\frac{\partial u}{\partial t},\frac{\partial u}{\partial x}\right)\nonumber\\
    &+\sigma\left(t,x,\frac{\partial u}{\partial t},\frac{\partial u}{\partial x}\right)\dot W(t)=0
\end{align}
 has been investigated by Chow and Menaldi in \cite{[Chow+Menaldi_1999]}.
The first named authour, Maslowski and Seidler \cite{Brz+Masl+S_2005} proved
the existence of global mild solutions of the following stochastic
beam equations including a white noise type and a nonlinear random
damping term in a Hilbert space $H$
\begin{align}\label{SDE-3}
    u_{tt}&+A^2u+g(u,u_t)+m(\|B^{\frac{1}{2}}u\|^2)Bu=\sigma(u,u_t)\dot{W},
\end{align}
where the operators $A$ and $B$ are self-adjoint and $\mathcal{D}(A)\subset\mathcal{D}(B)$.

 It is of interest
to know whether the theory can be extended to the problems with jump
noise which is in some sense more realistic.
In our paper, we consider a stochastic beam equation in some
Hilbert space $H$ with stochastic jump noise perturbations of the
form
\begin{align}\label{SDE-2}
u_{tt}=-A^2u-f(t,u,u_t)-m(\|B^{\frac{1}{2}}u\|^2)Bu+\int_{Z}g(t,u,u_t,z)\tilde{N}(t,dz),
\end{align}
where $m$ is a nonnegative function in $\mathcal{C}^1([0,\infty))$,
$A,B$ are self-adjoint operators and $\tilde{N}$ is a compensated
Poisson random measure. We will show that under some suitable locally
Lipschitz continuity and linear growth assumptions of the coefficients $f$ and $m$, the stochastic
beam equation \eqref{SDE-2} has a unique maximal local mild solution
$\mathfrak{u}$ which satisfies
    \begin{align}\label{intro_locally mild solution}
     \mathfrak{u}(t\wedge\tau_n)=e^{t\mathcal{A}}\mathfrak{u}_0+\int_0^{t\wedge\tau_n}e^{(t\wedge\tau_n-s)
     \mathcal{A}}F(s,\mathfrak{u}(s))\,ds
     +I_{\tau_n}(G(\mathfrak{u}))(t\wedge\tau_n)\ \ \mathbb{P}\text{-a.s.}\  t\geq0,
  \end{align}
  where $\{\tau_n\}_{n\in\N}$ is a sequence of stopping times and $I_{\tau_n}(G(\mathfrak{u}))$ is a process defined by
  \begin{align*}
       I_{\tau_n}(G(\mathfrak{u}))(t)=\int_0^{t}\int_Z1_{[0,\tau_n]}e^{(t-s)\mathcal{A}}G(s,\mathfrak{u}(s-),z)\tilde{N}(ds,dz),\ t\geq0.
  \end{align*}
 We also show the nonexplosion of the local maximal solution. The basic
method that we shall use in showing the nonexplosion is the Khasminskii test. For this aim, the essence is to be able to
construct an appropriate Lyapunov function.  As we all known the It\^{o} formula can not be applied to the mild solution directly, a standard method of solving this problem which was used in \cite{Brz+Gat_1999} is to approximate the equation \eqref{SDE-3} by a sequence of equations with the operator $A$ replaced by the Yosida approximations $A_n$ of $A$. However, since the factorization method used in showing the uniform-$L^p$-convergence of the Yosida approximating stochastic convolutions w.r.t. the Wiener noise may not be applicable to our case, in contrast to \cite{Brz+Gat_1999} we follow the approximating procedure introduced in \cite{[Tu-1]} and \cite{[Tubaro]}. We first derive some estimates when $u$ is in $
\mathcal{D}(\mathcal{A})$, where $\mathcal{D}(\mathcal{A})$ is the
domain of the generator $\mathcal{A}$. In fact, we can always
approximating $u$ by such functions in $\mathcal{D}(\mathcal{A})$
and pass the limit as  in \cite{[Tubaro]} to get the desired
estimate of Lyapunov function. Moreover, the asymptotic stability
and uniform boundedness of the solution can also be established
in the same manner by a suitable choice of another Lyapunov
function. We also show that under some natural conditions all the
results in this paper we've achieved for \eqref{SDE-2} can be
applied to a wide class of models including the following problem
\begin{align}\label{eq-1-intro}
    \frac{\partial^2 u}{\partial t^2}-m\left(\int_D|\triangledown u|^2dx\right)\triangle u +\gamma\triangle^2u&+G\left(t,x,u,\frac{\partial u}{\partial t},\triangledown u\right)\nonumber
    \\
    &=\int_Z\Pi(t,x,u,\frac{\partial u}{\partial t},\triangledown u,z)\tilde{N}(t,du)
\end{align}
with either the clamped boundary conditions
\begin{align}
      u=\frac{\partial u}{\partial n}=0\ \text{on }\partial D,
\end{align}
or the hinged boundary conditions
\begin{align}
      u=\triangle u=0\ \text{on }\partial D.
\end{align}
In the above $\frac{\partial}{\partial
n}$ denotes the outer normal derivative.

In light of the results achieved in this paper together with \cite{Brz+Gat_1999}, it is straight forward to extend our problem
to L\'{e}vy noise. Moreover, the Feller property of the solution to the problem \eqref{SDE-2} can also be obtained and this makes it possible to define an invariant probability measure for the process \eqref{intro_locally mild solution}.  The existence of invariant measure for \eqref{intro_locally mild solution}, in contrary to the finite dimensional case e.g. in \cite{[ABW]}, is still an open problem.

Stochastic PDEs driven by discontinuous noise is a very new subject. So far mainly problems with Lipschitz coefficients have been investigated, see the recent monograph \cite{Peszat+Zabczyk_2007}. A type of stochastic PDEs with monotone and coercive coefficients, which is weaker than the usual Lipschitz and linear growth assumptions, driven by some discontinuous perturbations were studied by Gy\"{o}ngy and Krylov in \cite{Gyongy+K_1980} for the finite-dimensional case and extended by Gy\"{o}ngy to infinite-dimensional spaces in \cite{Gyongy_1982}.
Stochastic reaction diffusion equations driven by Levy noise have been a subject of a recent paper \cite{Brzez+Haus_reaction} by one of the authours and Hausenblas, where also some comments on the existing literature can be found.
The approach of the current paper is different as it does not use any compactness methods but instead follow a more natural route of contracting maximal local solution and then proving that its lifespan  is equal to infinity. To our best  knowledge the present paper is the first one in which this approach is applied to SPDEs with non-Lipschitz coefficients.

\section{Main results}

Throughout the whole paper we assume that $H$ is a real separable
Hilbert space with inner product $\langle\cdot,\cdot\rangle$ and
corresponding norm $\Vert\cdot\|_H$.
By $\mathcal{B}(H)$ we denote the Borel $\sigma$-field on $H$, i.e. the $\sigma$-field generated by the family of all open subsets of $H$. Let $B:\mathcal{D}(B)\rightarrow H,$ $\mathcal{D}(B)\subset H$, 
 be a self-adjoint operator.
Suppose that $A:\mathcal{D}(A)\rightarrow H$, where $\mathcal{D}(A)\subset\mathcal{D}(B)$, is a self-adjoint (unbounded)
operator and
 $A\geq\mu I$ for some $\mu>0$. Moreover, we assume that $B\in\mathcal{L}(\mathcal{D}(A),H)$.
 Here $\mathcal{D}(A)$ is the domain of $A$ endowed with the graph norm $\|x\|_{\mathcal{D}(A)}:=\|Ax\|$. 
  Let $m$ be a nonnegative function of class $C^1$.
  Let $(\Omega,\mathcal{F},\mathbb{P})$ be a probability space with the filtration $\mathfrak{F}=(\mathcal{F}_t)_{t\geq0}$
  satisfying the usual hypotheses and $(Z,\mathcal{Z},\nu)$ be a measure space, where $\nu$ is a $\sigma$-finite measure. We denote by 
  $$\tilde{N}((0,t]\times B)=N((0,t]\times B)-t\nu(B),\ t\geq0,\ B\in\mathcal{Z},$$
  the compensated Poisson random measure on $[0,T]\times\Omega\times Z$ with the intensity measure $\nu(\cdot)$.
       Let $\mathcal{B}\mathcal{F}$ denote the $\sigma$-field of the progressively measurable sets on $[0,T]\times\Omega$, i.e.
      $$\mathcal{B}\mathcal{F}= \{ A\subset[0,T]\times\Omega:\forall\ t\in[0,T], A\cap([0,t]\times\Omega)\in\mathcal{B}([0,t])\otimes\mathcal{F}_t\}.$$

  \begin{defi}[\bf{Predictability}]\label{defi: predictablility}
  Let $\mathcal{P}$ denote the $\sigma$-field on $[0,\infty)\times \Omega$ generated by all real-valued left-continuous and $\mathfrak{F}$-adapted processes.

Let $\hat{\mathcal{P}}$ denote the $\sigma$-field on $\mathbb{R}_+\times\Omega\times Z$ generated all real-valued functions $g:\mathbb{R}_+\times\Omega\times Z\rightarrow \mathbb{R}$ satisfying the following properties
\begin{enumerate}
    \item[(1)] for every $t>0$, the mapping $\Omega\times Z\ni (\omega,z)\mapsto g(t,\omega,z)\in \mathbb{R}$ is $\mathcal{Z}\otimes\mathcal{F}_t/\mathcal{B}(\mathbb{R})$-measurable;
    \item[(2)] for every $(\omega,z)\in\Omega\times Z$, the path $\mathbb{R}_+\ni t\mapsto g(t,\omega,z)\in \mathbb{R}$ is left-continuous.
\end{enumerate}
Let $(E,\mathcal{B}(H))$ be a measurable space. We say that an $E$-valued process $g=(g(t))_{t\geq 0}$ is \textbf{predictable} if the mapping $[0,\infty)\times\Omega\ni(t,\omega)\mapsto g(t,\omega)\in E$ is $\mathcal{P}/\mathcal{B}(E)$-measurable.\\
We say that an $E$-valued function $g:\mathbb{R}_+\times\Omega\times Z\rightarrow E$ is \textbf{$\mathfrak{F}$-predictable} if it is $\hat{\mathcal{P}}/\mathcal{B}(E)$-measurable.
\end{defi}

 In this paper, our main aim is to consider the following stochastic evolution equation
 \begin{align}\label{SDE}
 \begin{split}
                &u_{tt}=-A^2u-f(t,u,u_t)-m(\|B^{\frac{1}{2}}u\|^2)Bu+\int_{Z}g(t,u(t-),u_t(t-),z)\tilde{N}(t,dz),\\
        &u(0)=u_0,\ u_t(0)=u_1.
        \end{split}
 \end{align}
Here $f:\mathbb{R}_+\times \mathcal{D}(A)\times H\ni
(t,\xi,\eta)\mapsto f(t,\xi,\eta)\in H,$ is a
$\mathcal{B}(\mathbb{R}_+)\otimes\mathcal{B}(\mathcal{D}(A))\otimes\mathcal{B}(H)/\mathcal{B}(H)$-measurable
function and $g:\mathbb{R}_+\times \mathcal{D}(A)\times H\times
Z\ni(t,\xi,\eta,z)\mapsto g(t,\xi,\eta,z)\in H,$ is a
$\mathcal{B}(\mathbb{R}_+)\otimes\mathcal{B}(\mathcal{D}(A))\otimes\mathcal{B}(H)\otimes
\mathcal{Z}/\mathcal{B}(H)$-measurable function. One can transform
Equation \eqref{SDE1} into the following first order system
\begin{align}\label{eq1}
\begin{split}
     &du=u_tdt\\
     &du_t=-A^2udt-f(u,u_t)dt-m(\|B^{\frac{1}{2}}u\|^2)Budt+\int_{Z}g(t,u(t-),u_t(t-),z)\tilde{N}(dt,dz).
     \end{split}
\end{align}
Or equivalently, we can rewrite it in the form
\begin{align*}
  \left( \begin{array}{c} du \\ du_t \end{array} \right)
                 =\left(\begin{array}{cc} 0 & I\\ -A^2 & 0 \end{array}\right)\left( \begin{array}{c} u \\ u_t \end{array} \right)dt&+\left( \begin{array}{c} 0 \\ -f(t,u,u_t)-m(\|B^{\frac{1}{2}}u\|)Bu \end{array} \right)dt\\
                 &+\left( \begin{array}{c} 0 \\ \int_Zg(t,u(t-),u_t(t-),z)\tilde{N}(dt,dz) \end{array} \right).
\end{align*}
Now we introduce a new space $\mathcal{H}:=\mathcal{D}(A)\times H$
with the product norm
\begin{align*}
    \left\|\left( \begin{array}{c} x \\ y \end{array} \right)\right\|
    ^2_{\mathcal{H}}:=\|Ax\|^2_H+\|y\|^2_H.
\end{align*}
It is easy to see that $\mathcal{H}$ is a Hilbert space with norm
$\|\cdot\|_{\mathcal{H}}$. We also define functions
\begin{align}
     &F:\mathbb{R}_+\times \mathcal{D}(A)\times H\ni(t,\xi,\eta)\mapsto\left( \begin{array}{c} 0 \\ -f(t,\xi,\eta)-m(\|B^{\frac{1}{2}}\xi\|^2)B\xi \end{array} \right)\in\mathcal{H}\label{function F}\\
     &G:\mathbb{R}_+\times \mathcal{D}(A)\times H\times Z\ni(t,\xi,\eta,z)\mapsto\left( \begin{array}{c} 0 \\ g(t,\xi,\eta,z) \end{array}\right)\in\mathcal{H} .
\end{align}
Put
\begin{align*}
 \mathcal{A}=\left( \begin{array}{cc} 0 & I \\ -A^2 & 0 \end{array}
 \right),\ \mathcal{D}(\mathcal{A})=\mathcal{D}(A^2)\times H.
\end{align*}
Set $\mathfrak{u}=(u,u_t)^{\top}$ and
$\mathfrak{u}_0=(u_0,u_1)^{\top}$.
Then Equation \eqref{SDE} allows the following form
\begin{align}\label{SDE1}
\begin{split}
   &d\mathfrak{u}=\mathcal{A}\mathfrak{u}dt+F(t,\mathfrak{u}(t))dt+\int_ZG(t,\mathfrak{u}(t-),z)\tilde{N}(dt,dz),\ \ t\geq0\\
   &\mathfrak{u}(0)=\mathfrak{u}_0.
   \end{split}
\end{align}

\begin{remark}\label{rem-9}
(1) The operator $\mathcal{A}$ generates a $C_0$-unitary group, denoted by $e^{t\mathcal{A}}$, $-\infty<t<\infty$,  on $\mathcal{H}$, see also Chapter V in \cite{[Lax]}.

(2) The functions $f$ and $g$ appearing in the equation \eqref{SDE} can also be assume to be random, namely,
$$f:\mathbb{R}_+\times\Omega\times \mathcal{D}(A)\times H\ni
(t,\xi,\eta)\mapsto f(t,\omega,\xi,\eta)\in H,$$ is a
$\mathcal{B}\mathcal{F}\otimes\mathcal{B}(\mathcal{D}(A))\otimes\mathcal{B}(H)/\mathcal{B}(H)$-measurable
function and $$g:\mathbb{R}_+\times\Omega\times \mathcal{D}(A)\times H\times
Z\ni(t,\omega,\xi,\eta,z)\mapsto g(t,\omega,\xi,\eta,z)\in H,$$ is a
$\mathcal{P}\otimes\mathcal{B}(\mathcal{D}(A))\otimes\mathcal{B}(H)\otimes
\mathcal{Z}/\mathcal{B}(H)$-measurable function. But due to \cite{[Metivier_1982]}, the functions $f$ and $g$ need to satisfy the following additional property:

if $X$ and $Y$ are two $\mathcal{H}$-valued c\`{a}dl\`{a}g processes and $\tau$ is a stopping time such that
\begin{align*}
     X1_{[0,\tau)}=Y1_{[0,\tau)},
\end{align*}
then we have
\begin{align*}
      1_{[0,\tau]}f(\cdot,\cdot,X)=1_{[0,\tau]}f(\cdot,\cdot,Y) \ \text{and }1_{[0,\tau]}g(\cdot,\cdot,X,\cdot)=1_{[0,\tau]}g(\cdot,\cdot,Y,\cdot).
      \end{align*}

\end{remark}
Let $\mathcal{M}^2_{loc}(\mathcal{B}\mathcal{F})$ be the space of all $\mathcal{H}$-valued
progressively measurable processes $\phi:\mathbb{R}_+\times\Omega\rightarrow \mathcal{H}$ such that for all $T\geq 0$,
 \begin{align*}
     \E\int_0^{T}\|\phi(t)\|^2\,dt<\infty.
\end{align*}
 Let $\mathcal{M}^2_{loc}(\hat{\mathcal{P}})$ be the space of all $\mathcal{H}$-valued $\mathfrak{F}$-predictable processes $\varphi:\mathbb{R}_+\times\Omega\times Z\rightarrow \mathcal{H}$ such that for all $T\geq0$,
 \begin{align*}
     \E\int_0^{T}\int_Z\|\varphi(t,z)\|^2\nu(dz)dt<\infty.
\end{align*}
\begin{defi}\label{defi: strong solution}A \textbf{strong solution} to Equation \eqref{SDE1} is a $\mathcal{D}(\mathcal{A})$-valued $\mathfrak{F}$-adapted stochastic process $(X(t))_{t\geq0}$ with c\`{a}dl\`{a}g paths such that
\begin{enumerate}
   \item[(1)]$X(0)=\mathfrak{u}_0$ a.s.,
   \item[(2)]the processes $\phi$, $\varphi$ defined by
   \begin{align*}
   \phi(t,\omega)&=F(t,X(t,\omega))\ \ (t,\omega)\in\mathbb{R}_+\times\Omega;\\
   \varphi(t,\omega,z)&=G(t,X(t-,\omega),z)\ \ (t,\omega,z)\in\mathbb{R}_+\times\Omega\times Z
   \end{align*}
   belong to the spaces $\mathcal{M}^2_{loc}(\mathcal{B}\mathcal{F})$ and $\mathcal{M}^2_{loc}(\hat{\mathcal{P}})$ respectively.

   \item[(3)]for any $t\geq 0$, the equality
   \begin{align}\label{strong solution}
     X(t)=\mathfrak{u}_0+\int_0^t\mathcal{A}X(s)\,ds+\int_0^tF(s,X(s))\,ds+\int_0^{t}\int_Z G(s,X(s-),z)\tilde{N}(ds,dz)
   \end{align}
   holds $\mathbb{P}$-a.s.
\end{enumerate}
\end{defi}
\begin{remark}Note that if a process $X(t)$, $t\geq0$ is adapted and has c\`{a}dl\`{a}g paths, then the left-limit process $X(t-)$, $t\geq0$ is left continuous and adapted, hence the process $X(t-)$, $t\geq0$ is predictable. In such a case, the definition \ref{defi: strong solution} is reasonable.
\end{remark}

\begin{defi}\label{defi: mild solution} A \textbf{mild solution} to Equation \eqref{SDE1} is
an $\mathcal{H}$-valued $\mathfrak{F}$-adapted stochastic process $(X(t))_{t\geq 0}$ with c\`{a}dl\`{a}g paths defined on
$(\Omega,\mathcal{F},\mathfrak{F},\mathbb{P})$ such
that the conditions (1) and (2) in the definition of \ref{defi: strong solution} are satisfied and

   for any $t\geq0$, the equality
   \begin{align}\label{mild solution}
     X(t)=e^{t\mathcal{A}}\mathfrak{u}_0+\int_0^te^{(t-s)\mathcal{A}}F(s,X(s))\,ds+\int_0^{t}\int_Ze^{(t-s)\mathcal{A}}G(s,X(s-),z)\tilde{N}(ds,dz)
   \end{align}
   holds $\mathbb{P}$-a.s.
\end{defi}

We say that a solution $(X(t))_{t\geq0}$ to the Equation \eqref{SDE1} is \textbf{pathwise unique} (or \textbf{up to distinguishable}) if for any other solution $(Y(t))_{t\geq0}$, we have
\begin{align*}
    \mathbb{P}(X(t)=Y(t), \text{ for all }t\geq0)=1.
\end{align*}
\begin{defi} We say that $X$ is a mild solution on a closed stochastic interval $[0,\sigma]$ if the integral on the right of \eqref{mild solution} is defined on $[0,\sigma]$ and it equals to $X$ on $[0,\sigma]$, $\mathbb{P}$-a.s., namely
\begin{align}\label{eq-340}
X(t)=e^{t\mathcal{A}}\mathfrak{u}_0+\int_0^te^{(t-s)\mathcal{A}}F(s,X(s))\,ds+\int_0^{t}\int_Ze^{(t-s)\mathcal{A}}G(s,X(s-),z)\tilde{N}(ds,dz)
\end{align}
holds on $[0,\tau]$, $\mathbb{P}\text{-a.s.}$.
\end{defi}

\begin{remark}\label{rem-101}
Alternatively, we may rewrite \eqref{eq-340} in the following equivalent form
    \begin{align}
     X(t\wedge\tau)=e^{t\mathcal{A}}\mathfrak{u}_0+\int_0^{t\wedge\tau}e^{(t\wedge\tau-s)\mathcal{A}}F(s,X(s))\,ds
     +I_{\tau}(G(X))(t\wedge\tau)\ \ t\geq0,\ \mathbb{P}\text{-a.s., }
  \end{align}
  where $I_{\tau}(G(X))$ is a process defined by
  \begin{align*}
       I_{\tau}(G(X))(t)=\int_0^{t}\int_Z1_{[0,\tau]}(s)e^{(t-s)\mathcal{A}}G(s,X(s-),z)\tilde{N}(ds,dz),\ t\geq0.
  \end{align*}
\end{remark}

\begin{remark}\label{rem-102}
According to  Colollary 13.7  in the monograph \cite{[Metivier_1982]} every predictable
and right-continuous martingale is continuous, so   if we impose both
properties on a process, it turns out that we are assuming nothing but the
continuity of the process. In our definition, the reason why we need the
predictability of the process $X$ is to get the predictability of the integrand
$e^{(t-s)A}G(t,X(s),z)$. But since we assume that the process is \cadlag, we can get
around this difficulty by taking the left-limit process.
\end{remark}

We first deal with a simple case in which the function $F$ is given
by
\begin{align}\label{condition on F}
    F:[0,\infty)\times \mathcal{D}(A)\times H\ni(t,\xi,\eta)\mapsto\left( \begin{array}{c} 0 \\ -f(t,\xi,\eta) \end{array} \right)\in\mathcal{H}.
\end{align}
In order to show the existence and uniqueness of a mild solution to
problem \eqref{SDE1}, we impose certain growth conditions and global
Lipschitz conditions on the functions $f$ and $g$.
\begin{assu}\label{assu: growth condition}
There exist constants $K_f$ and $K_g$ such that for all $t\geq 0$
and all $x=(x_1,x_2)^{\top}\in\mathcal{H}$,
\begin{align}
      \|f(t,x_1,x_2)\|^2_H&\leq K_f(1+\|x\|^2_{\mathcal{H}})\label{growth condition of f}\\
      \int_Z\|g(t,x_1,x_2,z)\|^2_H\nu(dz)&\leq K_g(1+\|x\|^2_{\mathcal{H}}).\label{growth condition of g}
\end{align}
\end{assu}

\begin{assu}\label{assu: globally Lipschtiz condition}
There exist constant $L_f$ such that for all $t\geq0$ and
all $x=(x_1,x_2)^{\top}\in\mathcal{H}$, $y=(y_1,y_2)^{\top}\in\mathcal{H}$,
\begin{align}
      \|f(t,x_1,x_2)-f(t,y_1,y_2)\|_{H}\leq L_f\|x-y\|_{\mathcal{H}},\label{globally Lipschitz condition of f}
\end{align}
\end{assu}  
\begin{assu}\label{assu-g}
	        There exist constant $L_g$ such that for all $t\geq0$ and
			all $x=(x_1,x_2)^{\top}\in\mathcal{H}$, $y=(y_1,y_2)^{\top}\in\mathcal{H}$,
			\begin{align}  		      \int_Z\|g(t,x_1,x_2,z)-g(t,y_1,y_2,z)\|_{H}^2\nu(dz)\leq L_g\|x-y\|_{\mathcal{H}}^2.\label{globally Lipschitz condition of g}
			\end{align}
\end{assu}

\begin{theorem}\label{theo: exsitence and uniqueness of mild solution}

   Suppose that functions $f,g$ satisfy Assumptions \ref{assu: growth condition}, \ref{assu: globally Lipschtiz condition} and \ref{assu-g}. Then there exists a unique (up to distinguishable) mild solution of
   Equation \eqref{SDE1}. 
 In particular, if $$\mathfrak{u}_0\in\mathcal{D}(\mathcal{A}), \ F(\cdot,\mathfrak{u}(\cdot))\in\mathcal{M}^2_{loc}(\mathcal{B}\mathcal{F};\mathcal{D}(\mathcal{A}))\ 
 \text{and}\   G(\cdot,\mathfrak{u}(\cdot))\in\mathcal{M}^2_{loc}(\hat{\mathcal{P}};\mathcal{D}(\mathcal{A})),$$
 then the mild solution coincides with
probability $1$ with a strong solution at all the points over
$\mathbb{R}_+$. More precisely, the mild solution satisfying \eqref{mild solution} is $\mathbb{P}$-equivalent to the strong solution satisfying \eqref{strong solution}.

\end{theorem}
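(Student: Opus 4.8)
The plan is to obtain the mild solution through a Banach fixed point argument on a space of \cadlag\ processes, and then, under the stronger hypotheses, to identify it with a strong solution by means of a (stochastic) Fubini theorem. Fix $T>0$ and let $\mathcal{M}_T$ be the Banach space of $\mathcal{H}$-valued $\mathfrak{F}$-adapted \cadlag\ processes $X$ on $[0,T]$ with norm $\|X\|_{\mathcal{M}_T}^2:=\sup_{t\in[0,T]}\E\|X(t)\|_{\mathcal{H}}^2<\infty$, and define $\Phi:\mathcal{M}_T\to\mathcal{M}_T$ by
\begin{align*}
(\Phi X)(t)=e^{t\mathcal{A}}\mathfrak{u}_0+\int_0^te^{(t-s)\mathcal{A}}F(s,X(s))\,ds+\int_0^{t}\int_Ze^{(t-s)\mathcal{A}}G(s,X(s-),z)\,\tilde{N}(ds,dz).
\end{align*}
The first step is to check that $\Phi$ is well defined, which amounts to verifying that $\Phi X$ is again \cadlag, adapted and square integrable. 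The deterministic convolution is norm-continuous in $t$ by strong continuity of $e^{t\mathcal{A}}$ and the linear growth bound \eqref{growth condition of f}. For the stochastic convolution I would exploit that, by Remark \ref{rem-9}(1), $\mathcal{A}$ generates a $C_0$-\emph{group}: setting $M(t):=\int_0^t\int_Ze^{-s\mathcal{A}}G(s,X(s-),z)\,\tilde{N}(ds,dz)$, the process $M$ is a \cadlag\ $\mathcal{H}$-valued square-integrable martingale (because $\|e^{-s\mathcal{A}}\|_{\mathcal{L}(\mathcal{H})}=1$ and, by \eqref{growth condition of g}, the integrand lies in $\mathcal{M}^2_{loc}(\hat{\mathcal{P}})$ whenever $X\in\mathcal{M}_T$), and the stochastic convolution is nothing but $e^{t\mathcal{A}}M(t)$, which is \cadlag\ since $\|e^{t_n\mathcal{A}}M(t_n)-e^{t\mathcal{A}}M(t)\|_{\mathcal{H}}\le\|M(t_n)-M(t)\|_{\mathcal{H}}+\|(e^{t_n\mathcal{A}}-e^{t\mathcal{A}})M(t)\|_{\mathcal{H}}\to0$. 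This is exactly the place where the group structure of $e^{t\mathcal{A}}$ substitutes for the factorization method mentioned in the Introduction.

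Next I would establish the contraction estimate. Using $\|a+b\|_{\mathcal{H}}^2\le2\|a\|_{\mathcal{H}}^2+2\|b\|_{\mathcal{H}}^2$, the fact that $e^{t\mathcal{A}}$ is unitary so that $\|e^{(t-s)\mathcal{A}}\|_{\mathcal{L}(\mathcal{H})}=1$, the Cauchy--Schwarz inequality for the Bochner integral, the It\^{o} isometry for the compensated Poisson integral, and Assumptions \ref{assu: globally Lipschtiz condition}--\ref{assu-g} (observe that in the present case \eqref{condition on F} one has $\|F(s,x)-F(s,y)\|_{\mathcal{H}}=\|f(s,x_1,x_2)-f(s,y_1,y_2)\|_H\le L_f\|x-y\|_{\mathcal{H}}$, and that left limits may be dropped since they differ only on a Lebesgue-null set of times), one obtains, for $X,Y\in\mathcal{M}_T$ and $t\in[0,T]$,
\begin{align*}
\E\|(\Phi X)(t)-(\Phi Y)(t)\|_{\mathcal{H}}^2\le 2(TL_f^2+L_g)\int_0^t\E\|X(s)-Y(s)\|_{\mathcal{H}}^2\,ds.
\end{align*}
Iterating this bound yields $\E\|(\Phi^nX)(t)-(\Phi^nY)(t)\|_{\mathcal{H}}^2\le\frac{\bigl(2(TL_f^2+L_g)T\bigr)^n}{n!}\|X-Y\|_{\mathcal{M}_T}^2$, so $\Phi^n$ is a strict contraction on $\mathcal{M}_T$ for $n$ large; hence $\Phi$ has a unique fixed point $X\in\mathcal{M}_T$, which is the mild solution on $[0,T]$ (the required $\mathcal{M}^2_{loc}$ memberships of $F(\cdot,X(\cdot))$ and $G(\cdot,X(\cdot-),\cdot)$ following from Assumption \ref{assu: growth condition}). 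Because the restriction to $[0,T']$ of the solution on $[0,T]$ is, by the same estimate, the solution on $[0,T']$ for $T'<T$, these processes are consistent and paste into a mild solution on $[0,\infty)$. For uniqueness among \emph{all} mild solutions in the sense of Definition \ref{defi: mild solution} — which need not a priori lie in $\mathcal{M}_T$ — given two such solutions $X,Y$ I would introduce the stopping times $\sigma_R:=\inf\{t\ge0:\|X(t)\|_{\mathcal{H}}\vee\|Y(t)\|_{\mathcal{H}}\ge R\}$, run the estimate above on the stochastic interval $[0,t\wedge\sigma_R]$, apply Gronwall's lemma to get $\E\|X(t\wedge\sigma_R)-Y(t\wedge\sigma_R)\|_{\mathcal{H}}^2=0$, and let $R\to\infty$; the \cadlag\ property then upgrades this to indistinguishability.

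Finally, for the equivalence with a strong solution, assume $\mathfrak{u}_0\in\mathcal{D}(\mathcal{A})$, $F(\cdot,\mathfrak{u}(\cdot))\in\mathcal{M}^2_{loc}(\mathcal{B}\mathcal{F};\mathcal{D}(\mathcal{A}))$ and $G(\cdot,\mathfrak{u}(\cdot))\in\mathcal{M}^2_{loc}(\hat{\mathcal{P}};\mathcal{D}(\mathcal{A}))$. Since $e^{t\mathcal{A}}$ leaves $\mathcal{D}(\mathcal{A})$ invariant and $\mathcal{A}$ is closed, each of the three terms in \eqref{mild solution} takes values in $\mathcal{D}(\mathcal{A})$, so $X(t)\in\mathcal{D}(\mathcal{A})$. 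I would then insert the identity $e^{(t-s)\mathcal{A}}y-y=\int_s^t\mathcal{A}e^{(r-s)\mathcal{A}}y\,dr$, valid for $y\in\mathcal{D}(\mathcal{A})$, into each of the three terms of \eqref{mild solution}, interchange the order of integration in the drift term by the classical Fubini theorem and in the noise term by the stochastic Fubini theorem for compensated Poisson integrals — both legitimate thanks to the $\mathcal{D}(\mathcal{A})$-valued $\mathcal{M}^2_{loc}$ integrability together with the commutation $\mathcal{A}e^{(r-s)\mathcal{A}}=e^{(r-s)\mathcal{A}}\mathcal{A}$ on $\mathcal{D}(\mathcal{A})$ — and observe that the resulting $dr$-integrand is precisely $\mathcal{A}X(r)$; this gives
\begin{align*}
X(t)=\mathfrak{u}_0+\int_0^t\mathcal{A}X(s)\,ds+\int_0^tF(s,X(s))\,ds+\int_0^{t}\int_ZG(s,X(s-),z)\,\tilde{N}(ds,dz),
\end{align*}
which is \eqref{strong solution}; reading the computation backwards shows conversely that every strong solution is a mild solution, so the two notions coincide up to indistinguishability. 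The two points I expect to require the most care are the \cadlag\ modification and predictability of the stochastic convolution, handled via the group property as above, and the justification of the stochastic Fubini theorem for the compensated Poisson integral in the mild-to-strong passage, where the hypothesis that $F$ and $G$ are $\mathcal{D}(\mathcal{A})$-valued is essential.
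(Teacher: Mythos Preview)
Your approach is essentially the paper's: a Banach fixed-point argument for existence/uniqueness of the mild solution, followed by the (stochastic) Fubini identity $e^{(t-s)\mathcal{A}}y-y=\int_s^t\mathcal{A}e^{(r-s)\mathcal{A}}y\,dr$ to pass between mild and strong forms. The differences are cosmetic: where the paper obtains the contraction by switching to an equivalent weighted norm $\|X\|_\lambda=\sup_te^{-\lambda t}(\E\|X(t)\|^2)^{1/2}$, you iterate $\Phi^n$; and where the paper cites an external result for the \cadlag\ modification of the Poisson stochastic convolution, you derive it directly from the unitary-group identity $\int_0^te^{(t-s)\mathcal{A}}\varphi\,\tilde N=e^{t\mathcal{A}}\int_0^te^{-s\mathcal{A}}\varphi\,\tilde N$, which is more self-contained. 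Your localisation argument for uniqueness via $\sigma_R$ is in fact slightly more careful than the paper's, which only argues uniqueness within $\mathcal{M}_T^2$.

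One technical point to fix: the space $\mathcal{M}_T$ of \cadlag\ adapted processes equipped with $\sup_{t}(\E\|X(t)\|_{\mathcal{H}}^2)^{1/2}$ is not obviously complete---a Cauchy sequence in this norm converges only in $L^2(\Omega)$ at each fixed $t$, and the pointwise limit need not admit a \cadlag\ modification. The paper avoids this by running the fixed point in the complete space of \emph{progressively measurable} processes with the same norm and then observing a posteriori that the fixed point, being equal to $\Phi$ of itself, inherits a \cadlag\ version from the regularity of $\Phi$. You can patch your argument the same way with no change to the estimates; alternatively, work with $(\E\sup_t\|X(t)\|_{\mathcal{H}}^2)^{1/2}$ and invoke a maximal inequality for the stochastic convolution in the contraction step.
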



Another generalization is to the situation where the Lipschitz condition \eqref{globally Lipschitz condition of f} is released to be the locally Lipschitz condition below.
\begin{assu}\label{assu: locally Lipschitz condition}
Assume that for every $R>0$, there exists $L_R>0$ such that for all
$t\geq0$ and for every $x=(x_1,x_2)^{\top}$,
$y=(y_1,y_2)^{\top}\in\mathcal{H}$ satisfying
$\|x\|_{\mathcal{H}},\|y\|_{\mathcal{H}}\leq R$,
\begin{align}\label{locally Lipschitz condition}
      \|f(t,x_1,x_2)-f(t,y_1,y_2)\|_{H}\leq L_R\|x-y\|_{\mathcal{H}}.
\end{align}

\end{assu}
Now we shall examine stochastic equation \eqref{SDE1} of a more
general type than the equation with $F$ defined by \eqref{condition
on F} in the preceding Theorem. Note that the function
$\mathcal{H}\ni x=(x_1,x_2)\mapsto
m(\|B^{\frac{1}{2}}x_1\|^2)Bx_1\in H$, is locally Lipschitz
continuous. Hence if we suppose that $f$ satisfies Assumption
\eqref{locally Lipschitz condition}, then the function $F$ given by
\eqref{function F} satisfies the locally Lipschitz condition as well.

For future reference we specifically state the following important observations.
\begin{remark}\label{rem-main}

    (1) Because of the continuity of the function $F(t,x)$ with respect to the  $x$ variable, since   $F$ is  integrated  with respect to $t$ variable, the equation \eqref{SDE1} can be rewritten in the following equivalent form
    \begin{align*}
        d\mathfrak{u}=\mathcal{A}\mathfrak{u}dt+F(t,\mathfrak{u}(t-))dt+\int_ZG(t,\mathfrak{u}(t-),z)\tilde{N}(dt,dz),\ \ t\geq0.
    \end{align*}

    (2) Suppose that $X$ and $Y$ are two c\`{a}dl\`{a}g processes and $\tau$ is a stopping time. If $X$ and $Y$ coincide on the open interval $[0,\tau)$, i.e.
       \begin{align*}
              X(s,\omega)1_{[0,\tau)}(s)=Y(s,\omega)1_{[0,\tau)}(s),\ (s,\omega)\in\R_+\times\Omega,
       \end{align*}
       then we have
       \begin{align*}
              G(s,X(s-),z)1_{[0,\tau]}=G(s,Y(s-),z)1_{[0,\tau]}.
       \end{align*}
       This is because, the function $G(s,X(s-),z)$ depends only on the values of $X$ on $[0,\tau)$. However, if $G(t,\omega,x,z)$ itself is a stochastic process rather than a deterministic function, the above fact may no longer hold. In such a case, we require the condition introduced in Remark \ref{rem-9}.

\end{remark}


\begin{defi}
   A stopping time $\tau$ is called accessible if there exists an
   increasing sequence $\{\tau_n\}_{n\in\mathbb{N}}$ of stopping
   times such that $\tau_n<\tau$ and
   $\lim_{n\rightarrow\infty}\tau_n=\tau$ a.s. We call such
   sequence $\{\tau_n\}_{n\in\mathbb{N}}$ the approximating sequence
   for $\tau$.
 A local mild solution to \eqref{SDE1} is an $\mathcal{H}$-valued,
   adapted, c\`{a}dl\`{a}g local process $X=(X(t))_{0\leq
   t<\tau}$, where $\tau$ is an accessible stopping time with an
   approximating sequence $\{\tau_n\}_{n\in\mathbb{N}}$, such that
   for any $n\in\mathbb{N}$ and $t>0$, the stopped process
   $X_t^{\tau_n}:=X(t\wedge\tau_n)$, $t\geq 0$ satisfies,
    \begin{align}\label{locally mild solution}
     X(t\wedge\tau_n)=e^{t\mathcal{A}}\mathfrak{u}_0+\int_0^{t\wedge\tau_n}e^{(t\wedge\tau_n-s)\mathcal{A}}F(s,X(s))\,ds
     +I_{\tau_n}(G(X))(t\wedge\tau_n)\ \mathbb{P}\text{-a.s.}\ t\geq0.
  \end{align}
  where $I_{\tau_n}(G(X))$ is a process defined by
  \begin{align}\label{eqn-I_tau_n}
       I_{\tau_n}(G(X))(t)=\int_0^{t}\int_Z1_{[0,\tau_n]}(s)e^{(t-s)\mathcal{A}}G(s,X(s-),z)\tilde{N}(ds,dz),\ t\geq0.
  \end{align}
  Here we call $\tau$ a life span of the local mild solution $X$.

A local mild solution $X=(X(t))_{0\leq t\leq \tau}$  to equation
\eqref{SDE1} is pathwise unique if for any other local mild solution
$\tilde{X}=\{\tilde{X}_{0\leq t<\tilde{\tau}}\}$ to equation
\eqref{SDE1},
\begin{align*}
      X(t,\omega)=\tilde{X}(t,\omega),\ \ (t,\omega)\in [0,\tau\wedge\tilde{\tau})\times\Omega.
\end{align*}

 A local mild solution $X=(X(t))_{0\leq t<\tau}$ is called a maximal mild solution if for any other local mild
   solution $\tilde{X}=(\tilde{X}(t))_{0\leq t<\tilde{\tau}}$
   satisfying $\tilde{\tau}\geq \tau$ a.s. and $\tilde{X}|_{[0,\tau)\times\Omega}\sim
   X$, then $\tilde{X}=X$.   Furthermore, if $\mathbb{P}(\tau<\infty)>0$, the stopping time $\tau$ is called an \textbf{explosion time} and if $\mathbb{P}(\tau=+\infty)=1$, the local mild solution $X$ have no explosion and it is called a global mild
   solution to Equation \eqref{SDE1}.
\end{defi}
\begin{remark}\label{SDE_pp_lemma_1}

(1) There is an alternative way to define a local mild solution. We say that an $\mathcal{H}$-valued c\`{a}dl\`{a}g process $X$ defined on an open interval $[0,\tau)$ is a local mild solution if there exists an increasing sequence $\{\tau_n\}$ of stopping times such that $\tau_n\nearrow\tau$, or in other words $[0,\tau)=\cup_n[0,\tau_n]$, and $X$ is a mild solution to problem \eqref{SDE1} on every closed interval $[0,\tau_n]$, $n\in\mathbb{N}$, see Remark \ref{rem-101}.

(2) If the Equation \eqref{SDE1} has the property of uniqueness for local
solutions, then the uniqueness of local maximal solution holds as
well.

\end{remark}

\begin{theorem}\label{theo: existence and uniqueness of locally mild solution} Suppose that Assumptions \ref{assu: growth condition}, \ref{assu-g} and \ref{assu: locally Lipschitz condition} are satisfied. Then there exists a unique maximal local mild solution to Equation \eqref{SDE1}.

\end{theorem}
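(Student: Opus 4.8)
The plan is to obtain the maximal local mild solution by a standard truncation-and-patching argument, using Theorem \ref{theo: exsitence and uniqueness of mild solution} as a black box on truncated coefficients. First I would fix $R>0$ and introduce a cut-off function $\theta_R\in C^\infty([0,\infty);[0,1])$ with $\theta_R(r)=1$ for $r\le R$ and $\theta_R(r)=0$ for $r\ge R+1$, and set $F_R(t,x):=\theta_R(\|x\|_{\mathcal H})F(t,x)$. Using Assumption \ref{assu: locally Lipschitz condition} together with the local Lipschitz continuity of $x\mapsto m(\|B^{1/2}x_1\|^2)Bx_1$ noted just before Remark \ref{rem-main}, one checks that $F_R$ is globally Lipschitz and of linear growth on $\mathcal H$: on the ball $\{\|x\|_{\mathcal H}\le R+1\}$ the factor $\theta_R$ kills everything outside, and the Lipschitz constant of the product is controlled by $L_{R+1}$, $\sup|\theta_R'|$, and the sup of $\|F\|$ on that ball, which is finite by linear growth. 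The noise coefficient $g$ already satisfies the global conditions in Assumptions \ref{assu: growth condition} and \ref{assu-g}, so it is left untouched. By Theorem \ref{theo: exsitence and uniqueness of mild solution} the truncated equation with data $(\mathcal A, F_R, G)$ has a unique (up to indistinguishability) global mild solution, call it $X_R$.

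Next I would define the exit time $\tau_R:=\inf\{t\ge 0:\|X_R(t)\|_{\mathcal H}\ge R\}$ (with $\inf\emptyset=+\infty$); since $X_R$ is c\`adl\`ag and adapted this is a stopping time. The key consistency step is to show that if $R'>R$ then $X_{R'}=X_R$ on $[0,\tau_R)$ and $\tau_{R'}\ge\tau_R$ a.s. This follows from pathwise uniqueness: on $[0,\tau_R)$ the process $\|X_R\|_{\mathcal H}<R$, so $\theta_R(\|X_R\|)=\theta_{R'}(\|X_R\|)=1$ there, hence $F_R(\cdot,X_R)=F_{R'}(\cdot,X_R)=F(\cdot,X_R)$ up to the stopping time, and by Remark \ref{rem-main}(2) the same holds for the $G$-terms evaluated at $X_R(\cdot-)$ on $[0,\tau_R]$; so $X_R$ stopped at $\tau_R$ solves the $X_{R'}$-equation stopped at $\tau_R$, and the uniqueness assertion of Theorem \ref{theo: exsitence and uniqueness of mild solution} applied to the stopped equations (together with Remark \ref{rem-101}) forces $X_{R'}\equiv X_R$ on $[0,\tau_R)$, whence also $\tau_{R'}\ge \tau_R$. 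Consequently the sequence $\tau_R$ is nondecreasing in $R$; set $\tau:=\lim_{R\to\infty}\tau_R=\sup_R\tau_R$ and define $X$ on $[0,\tau)$ by $X:=X_R$ on $[0,\tau_R)$, which is unambiguous by consistency. Taking a strictly increasing integer sequence $R_n\uparrow\infty$ and $\tau_n:=\tau_{R_n}$, one gets an approximating sequence with $\tau_n<\tau$ on $\{\tau_n<\tau\}$ — the strict inequality needed for accessibility of $\tau$ is obtained, as usual, by passing to $\tau_n':=\tau_n\wedge(\tau-\tfrac1n)$ or by the standard observation that on $\{\tau_n=\tau<\infty\}$ the path would have a finite left limit, contradicting $\|X(t)\|_{\mathcal H}\to R_n$; I would make this precise in the write-up. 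By construction $X$ restricted to $[0,\tau_n]$ is a mild solution in the sense of Remark \ref{rem-101}, so $X$ is a local mild solution with life span $\tau$.

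For maximality, suppose $\tilde X=(\tilde X(t))_{0\le t<\tilde\tau}$ is another local mild solution with approximating sequence $\{\tilde\tau_k\}$. Pathwise uniqueness of the truncated equations gives $\tilde X=X_{R}=X$ on $[0,\tilde\tau_k\wedge\tau_R)$ for each $k,R$: indeed on $[0,\tilde\tau_k\wedge\tau_R)$ both processes stay in the ball of radius $R$ (after possibly enlarging $R$ along $\tilde\tau_k$, using that $\tilde X$ is c\`adl\`ag hence locally bounded on $[0,\tilde\tau_k]$), so both solve the $F_R$-equation up to that stopping time, and Theorem \ref{theo: exsitence and uniqueness of mild solution} applies. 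Letting $k,R\to\infty$ yields $\tilde X=X$ on $[0,\tilde\tau\wedge\tau)$ and $\tilde\tau\le\tau$ a.s.; if in addition $\tilde\tau\ge\tau$ a.s. and $\tilde X\sim X$ on $[0,\tau)$, then $\tilde\tau=\tau$ and $\tilde X=X$, which is precisely the definition of maximality. Uniqueness of the maximal solution then follows as in Remark \ref{SDE_pp_lemma_1}(2).

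The main obstacle I anticipate is not the algebra of the cut-off but the care needed around the jump term and the stopping-time bookkeeping: one must verify that stopping the mild-solution identity \eqref{mild solution} at $\tau_R$ produces exactly the identity \eqref{eqn-I_tau_n}--\eqref{locally mild solution} (commuting the stopping with the stochastic convolution against $\tilde N$), that the exit times are genuinely stopping times in the c\`adl\`ag setting with a possible jump across the level $R$, and that $\tau$ is accessible with a strictly-increasing approximating sequence. All of these are handled by the remarks already recorded in the excerpt (Remarks \ref{rem-101}, \ref{rem-main}, \ref{SDE_pp_lemma_1}) plus routine properties of stochastic integrals against compensated Poisson measures, so the proof is essentially a careful assembly rather than a new estimate.
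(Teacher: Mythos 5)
Your proposal is correct and follows essentially the same route as the paper: truncate the locally Lipschitz drift to get globally Lipschitz coefficients, solve globally via Theorem \ref{theo: exsitence and uniqueness of mild solution}, stop at the exit times from balls, glue the solutions by pathwise uniqueness, and take the supremum of the exit times as the lifespan. The only differences are minor implementation choices: the paper truncates by radial retraction (Lemma \ref{lem: globally cont.}) rather than a smooth cutoff, it proves maximality by contradiction using the blow-up $\lim_{t\nearrow\tau_{\infty}}\|X(t)\|_{\mathcal{H}}=\infty$ rather than your direct bound $\tilde{\tau}\leq\tau$, and the localized uniqueness on stochastic intervals that you attribute to Theorem \ref{theo: exsitence and uniqueness of mild solution} is justified in the paper by an explicit Gronwall/Davis-inequality estimate.
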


 Now we shall apply Khas'minski's test to show that $\tau_{\infty}=+\infty$ a.s. That is $\mathfrak{u}$ is a unique global mild solution.

\begin{theorem}\label{theo: lifespan} Suppose that Assumptions \ref{assu: growth condition}, \ref{assu-g} and \ref{assu: locally Lipschitz condition} are satisfied and $\mathfrak{u}_0$ is $\mathcal{F}_0$-measurable. Let $\mathfrak{u}$ be the unique maximal local mild solution to the Equation \eqref{SDE1} with life span $\tau_{\infty}$. Then $\tau_{\infty}=+\infty$ $\mathbb{P}$-a.s.
\end{theorem}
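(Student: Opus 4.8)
The plan is to build the natural energy (Lyapunov) functional of the beam equation, show that its expectation along the solution grows at most exponentially up to each exit time, and then conclude by the Khas'minski non-explosion test. Put $\widehat M(r):=\int_0^r m(s)\,ds$ for $r\ge 0$; since $m\ge 0$ and $m\in C^1$, $\widehat M$ is nonnegative, nondecreasing and of class $C^2$. On $\mathcal H=\mathcal D(A)\times H$ define
\[
   \Lambda(\mathfrak u)=\Lambda(u,v):=1+\|Au\|_H^2+\|v\|_H^2+\widehat M\!\left(\|B^{1/2}u\|_H^2\right),\qquad \mathfrak u=(u,v)^\top .
\]
Because $B\in\mathcal L(\mathcal D(A),H)$ and $\widehat M$ is continuous, $\Lambda:\mathcal H\to[1,\infty)$ is continuous, and $\Lambda(\mathfrak u)\ge 1+\|\mathfrak u\|_{\mathcal H}^2$, so $\Lambda(\mathfrak u)\to\infty$ as $\|\mathfrak u\|_{\mathcal H}\to\infty$. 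The decisive feature of $\Lambda$ is the cancellation of the two unbounded contributions: applying It\^o's formula to $\Lambda(\mathfrak u(t))$ formally, $\tfrac{d}{dt}\|Au\|_H^2=2\langle A^2u,u_t\rangle_H$ cancels the term $2\langle u_t,-A^2u\rangle_H$ coming from $du_t=-A^2u\,dt+\cdots$ (self-adjointness of $A$, i.e. skew-adjointness of $\mathcal A$), and $\tfrac{d}{dt}\widehat M(\|B^{1/2}u\|_H^2)=2m(\|B^{1/2}u\|_H^2)\langle Bu,u_t\rangle_H$ cancels $2\langle u_t,-m(\|B^{1/2}u\|_H^2)Bu\rangle_H$. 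Since $G$ perturbs only the second coordinate, on which $\Lambda$ is quadratic, the compensated-jump contribution reduces to $\int_Z\|g(t,u,u_t,z)\|_H^2\,\nu(dz)$. Hence, modulo a local martingale,
\begin{align*}
   d\Lambda(\mathfrak u(t))=\left(-2\langle u_t,f(t,u,u_t)\rangle_H+\int_Z\|g(t,u,u_t,z)\|_H^2\,\nu(dz)\right)dt+d(\text{loc.\ mart.}),
\end{align*}
and by Assumption \ref{assu: growth condition}, $|2\langle u_t,f\rangle_H|\le\|u_t\|_H^2+\|f\|_H^2\le(1+K_f)\Lambda(\mathfrak u)$ and $\int_Z\|g\|_H^2\nu(dz)\le K_g\Lambda(\mathfrak u)$, so the drift is dominated by $C\Lambda(\mathfrak u)$ with $C:=1+K_f+K_g$.

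The crux, and the main obstacle, is that this It\^o computation is not legitimate for the mild solution $\mathfrak u$, which need not take values in $\mathcal D(\mathcal A)=\mathcal D(A^2)\times H$. As announced in the Introduction, I would avoid the Yosida-in-$A$ route and instead follow \cite{[Tu-1],[Tubaro]}: fix $n\in\N$ and work on $[0,\tau_n]$, where $\tau_n:=\inf\{t\ge 0:\|\mathfrak u(t)\|_{\mathcal H}\ge n\}$; by maximality of the solution of Theorem \ref{theo: existence and uniqueness of locally mild solution} (blow-up alternative) one has $\tau_n\nearrow\tau_\infty$, and on $\{\tau_\infty<\infty\}$, $\|\mathfrak u(\tau_n)\|_{\mathcal H}\ge n$. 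Regularise by $\mathfrak u^{\lambda}(t):=\lambda R(\lambda,\mathcal A)\mathfrak u(t)$, which is $\mathcal D(\mathcal A)$-valued and, since $\mathcal A\,\lambda R(\lambda,\mathcal A)$ is bounded, solves a version of \eqref{SDE1} in the strong sense with $F,G$ replaced by $\lambda R(\lambda,\mathcal A)F$ and $\lambda R(\lambda,\mathcal A)G$; for $\mathfrak u^{\lambda}$ the It\^o formula above is rigorous, the cancellations being genuine up to an error produced by the non-commutation of $\lambda R(\lambda,\mathcal A)$ with the block structure of $\mathcal A$. Because $\|\lambda R(\lambda,\mathcal A)\|_{\mathcal L(\mathcal H)}\le 1$ ($\mathcal A$ generates a unitary group) and $\lambda R(\lambda,\mathcal A)\to I$ strongly as $\lambda\to\infty$, one passes to the limit exactly as in \cite{[Tubaro]}, using the continuity of $\Lambda$ on $\mathcal H$ together with the linear-growth bounds of Assumptions \ref{assu: growth condition} and \ref{assu-g} to justify dominated convergence of the drift and compensator terms. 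This yields the energy inequality
\begin{align*}
   \E\,\Lambda\!\left(\mathfrak u(t\wedge\tau_n)\right)\ \le\ \E\,\Lambda(\mathfrak u_0)+C\int_0^t\E\,\Lambda\!\left(\mathfrak u(s\wedge\tau_n)\right)ds,\qquad t\ge 0,\ n\in\N .
\end{align*}
Since $\mathfrak u_0$ is only $\mathcal F_0$-measurable, one first runs the argument on each event $\Omega_k=\{k-1\le\|\mathfrak u_0\|_{\mathcal H}<k\}$, on which $\Lambda(\mathfrak u_0)$ is bounded (using $\|B^{1/2}u_0\|_H^2=\langle Bu_0,u_0\rangle_H\le\|B\|_{\mathcal L(\mathcal D(A),H)}\|u_0\|_{\mathcal D(A)}^2$ and the continuity of $\widehat M$), and then sums over $k$; equivalently one conditions everything on $\mathcal F_0$.

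It then remains to apply Gronwall's lemma and invoke the Khas'minski test. Gronwall's inequality gives, uniformly in $n$,
\begin{align*}
   \E\,\Lambda\!\left(\mathfrak u(t\wedge\tau_n)\right)\ \le\ e^{Ct}\,\E\,\Lambda(\mathfrak u_0),\qquad t\ge 0 .
\end{align*}
Fix $T<\infty$. On $\{\tau_n\le T\}$ one has $T\wedge\tau_n=\tau_n$ and $\Lambda(\mathfrak u(\tau_n))\ge 1+n^2$, so by Chebyshev's inequality
\begin{align*}
   \mathbb P(\tau_n\le T)\ \le\ \frac{\E\,\Lambda\!\left(\mathfrak u(T\wedge\tau_n)\right)}{1+n^2}\ \le\ \frac{e^{CT}\,\E\,\Lambda(\mathfrak u_0)}{1+n^2}\ \longrightarrow\ 0\quad(n\to\infty),
\end{align*}
again arguing on each $\Omega_k$ and summing when $\E\,\Lambda(\mathfrak u_0)=\infty$. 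Since $\tau_n\nearrow\tau_\infty$ we have $\{\tau_\infty\le T\}=\bigcap_n\{\tau_n\le T\}$, hence $\mathbb P(\tau_\infty\le T)=0$ for every $T<\infty$, and letting $T\to\infty$ gives $\tau_\infty=+\infty$ $\mathbb P$-a.s. The only genuinely delicate step is the regularisation/limit argument ensuring that the energy identity — in particular the cancellation of the $A^2u$ and $m(\|B^{1/2}u\|_H^2)Bu$ terms — survives the passage from the $\mathcal D(\mathcal A)$-valued approximations to the mild solution; everything else is a routine Gronwall–Chebyshev estimate.
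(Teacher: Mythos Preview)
Your proposal is correct and follows essentially the same route as the paper. The paper uses the Lyapunov function $V(\mathfrak u)=\tfrac12\|\mathfrak u\|_{\mathcal H}^2+\tfrac12 M(\|B^{1/2}u\|_H^2)$ (your $\Lambda=1+2V$), exploits exactly the two cancellations you identify, and establishes the energy inequality by the same resolvent regularisation $mR(m;\mathcal A)$ \`a la Tubaro before concluding via Khas'minski\u{\i}--Gronwall--Chebyshev. The only presentational difference is that the paper first freezes the coefficients at $\mathfrak u(\cdot\wedge\tau_n)$, solves the resulting \emph{global} linear problem for an auxiliary process $v$ (so that $v(\cdot\wedge\tau_n)=\mathfrak u(\cdot\wedge\tau_n)$), and then regularises the coefficients to obtain strong solutions $v_m$; since in fact $v_m=mR(m;\mathcal A)v$, this is literally your $\mathfrak u^{\lambda}$ on $[0,\tau_n]$, so the two arguments coincide. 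Your remark about localising on $\Omega_k=\{k-1\le\|\mathfrak u_0\|_{\mathcal H}<k\}$ is a useful addition: the paper's Lemma~\ref{lem: Khasminskii test} assumes $\E V(\mathfrak u_0)<\infty$, which is not explicitly hypothesised in the theorem.
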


Now we shall consider the stability of the solution to Equation
\eqref{SDE1}. To simplify our problem, we will impose the following extra
assumptions.

\begin{assu}\label{assu: stability}
    \begin{enumerate}
         \item[1).] Suppose that function $f$ is given by $f(x)=\beta x_1$ for some $\beta\geq0$, where $x=(x_1,x_2)^{\top}\in\mathcal{H}$;
         \item[2).] Assumptions \eqref{assu: growth condition} and \eqref{assu: locally Lipschitz condition} hold;
         \item[3).] There exist nonnegative constants $R_g$ and $K$ such that
                     \begin{align*}
                             \int_Z\|g(x,z)\|^2_H\nu(dz)\leq R_g^2\|x\|_{\mathcal{H}}^2+K.
                     \end{align*}
         \item[4).] There exists $\alpha>0$ such that for all nonnegative real number $y$
                         \begin{align*}
                            ym(y)\geq\alpha M(y).
                         \end{align*}

    \end{enumerate}
\end{assu}

Before starting the main theorem for stability, we establish an auxiliary lemma, the proof of which can be found in
\cite{Brz+Masl+S_2005}.

\begin{lem}\label{lem: property of operator P} Define an operator $P:\mathcal{H}\rightarrow\mathcal{H}$ by
      \begin{align*}
        P:=\left(\begin{array}{cc} \beta^2 A^{-2}+2I & \beta A^{-2}\\ \beta I & 2I \end{array}\right).
      \end{align*}
      Then $P$ is self-adjoint isomorphism of $\mathcal{H}$ and satisfies the following
          \begin{align*}
          &(1)\ \ \ \|P\|_{\mathcal{L}(\mathcal{H})}^{-1}\langle Px,x\rangle_{\mathcal{H}}\leq \|x\|^2_{\mathcal{H}}\leq \langle Px,x\rangle_{\mathcal{H}},\ \ \ \ x\in\mathcal{H};\\
          &(2)\ \ \ \langle \left(\begin{array}{c} 0 \\ -\beta x_2 \end{array}\right),Px\rangle_{\mathcal{H}}=-\beta^2\langle x_1,x_2\rangle=-2\beta\|x_2\|^2\ \ \ \ x=(x_1,x_2)^{\top}\in\mathcal{H};\\
          &(3)\ \ \ \langle \mathcal{A}x,Px\rangle_{\mathcal{H}}=-\beta\|Ax_1\|^2_H+\beta^2\langle x_1,x_2\rangle+\beta\|x_2\|^2.
          \end{align*}

\end{lem}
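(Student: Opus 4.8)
The plan is to verify every assertion by a direct block computation, taking care of the domains of the unbounded operators involved. Since $A\ge\mu I$ with $\mu>0$, the inverse $A^{-1}$ is bounded and self-adjoint on $H$ with $\|A^{-1}\|_{\mathcal L(H)}\le\mu^{-1}$; moreover $A^{-2}$ maps $H$ into $\mathcal D(A^2)\subset\mathcal D(A)$ and maps $\mathcal D(A)$ into $\mathcal D(A)$. Hence for $x=(x_1,x_2)^{\top}\in\mathcal H=\mathcal D(A)\times H$ the first component $\beta^2A^{-2}x_1+2x_1+\beta A^{-2}x_2$ of $Px$ lies in $\mathcal D(A)$ and the second component $\beta x_1+2x_2$ lies in $H$, so $Px\in\mathcal H$. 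Boundedness of $P$ follows from $\|A(\beta^2A^{-2}x_1+2x_1+\beta A^{-2}x_2)\|_H\le\beta^2\mu^{-1}\|x_1\|_H+2\|Ax_1\|_H+\beta\mu^{-1}\|x_2\|_H$, together with $\|x_1\|_H\le\mu^{-1}\|Ax_1\|_H$, and from $\|\beta x_1+2x_2\|_H\le\beta\mu^{-1}\|Ax_1\|_H+2\|x_2\|_H$.

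For self-adjointness I would expand $\langle Px,y\rangle_{\mathcal H}$ using the definition $\langle(u_1,u_2)^{\top},(v_1,v_2)^{\top}\rangle_{\mathcal H}=\langle Au_1,Av_1\rangle_H+\langle u_2,v_2\rangle_H$ together with the elementary identity $\langle A^{-1}u,Av\rangle_H=\langle u,v\rangle_H$, valid for $v\in\mathcal D(A)$ (a consequence of self-adjointness of $A$ and of $A^{-1}$). A short calculation gives
\begin{align*}
\langle Px,y\rangle_{\mathcal H}=\beta^2\langle x_1,y_1\rangle_H+2\langle Ax_1,Ay_1\rangle_H+\beta\langle x_2,y_1\rangle_H+\beta\langle x_1,y_2\rangle_H+2\langle x_2,y_2\rangle_H,
\end{align*}
which is symmetric in $x$ and $y$; hence $P=P^{\ast}$.

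Claims (1)--(3) then come out by specializing this bilinear form. Putting $y=x$,
$$\langle Px,x\rangle_{\mathcal H}-\|x\|_{\mathcal H}^2=\|Ax_1\|_H^2+\beta^2\|x_1\|_H^2+2\beta\langle x_1,x_2\rangle_H+\|x_2\|_H^2=\|Ax_1\|_H^2+\|\beta x_1+x_2\|_H^2\ge0,$$
which is the right-hand inequality of (1) (and in particular $\langle Px,x\rangle_{\mathcal H}\ge0$); the left-hand inequality is Cauchy--Schwarz, $\langle Px,x\rangle_{\mathcal H}\le\|Px\|_{\mathcal H}\|x\|_{\mathcal H}\le\|P\|_{\mathcal L(\mathcal H)}\|x\|_{\mathcal H}^2$. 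From $\langle Px,x\rangle_{\mathcal H}\ge\|x\|_{\mathcal H}^2$ one gets $\|Px\|_{\mathcal H}\ge\|x\|_{\mathcal H}$, so $P$ is injective with closed range; being self-adjoint, its range is also dense, hence $P$ is onto, and by the open mapping theorem a topological isomorphism. (Alternatively one may exhibit $P^{-1}$ explicitly: the four entries of $P$ mutually commute, the formal $2\times2$ determinant is $\beta^2A^{-2}+4I\ge4I$, which is boundedly invertible, and the cofactor formula yields a bounded two-sided inverse.) Finally, (2) follows from $\langle(0,-\beta x_2)^{\top},Px\rangle_{\mathcal H}=\langle-\beta x_2,\beta x_1+2x_2\rangle_H$, and (3) from $\langle\mathcal Ax,Px\rangle_{\mathcal H}=\langle Ax_2,\beta^2A^{-1}x_1+2Ax_1+\beta A^{-1}x_2\rangle_H+\langle-A^2x_1,\beta x_1+2x_2\rangle_H$; here, for (3), one uses $x\in\mathcal D(\mathcal A)=\mathcal D(A^2)\times H$ so that $\langle A^2x_1,x_2\rangle_H=\langle Ax_1,Ax_2\rangle_H$ and $\langle A^2x_1,x_1\rangle_H=\|Ax_1\|_H^2$, and collecting terms gives $-\beta\|Ax_1\|_H^2+\beta^2\langle x_1,x_2\rangle_H+\beta\|x_2\|_H^2$.

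The manipulations are routine; the only points that need attention are the domain bookkeeping that makes each step with $A$, $A^2$, $A^{-1}$ legitimate (in particular that $Px\in\mathcal H$ for all $x\in\mathcal H$, and that identity (3) is only asserted on $\mathcal D(\mathcal A)$), and the short functional-analytic argument promoting the lower bound $\|Px\|_{\mathcal H}\ge\|x\|_{\mathcal H}$ plus self-adjointness to bijectivity. A complete proof may also be found in \cite{Brz+Masl+S_2005}.
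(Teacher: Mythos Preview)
Your proof is correct. The paper does not give its own argument for this lemma; it simply refers the reader to \cite{Brz+Masl+S_2005}. Your direct block computation of $\langle Px,y\rangle_{\mathcal H}$, the completion-of-squares for the lower bound in (1), and the standard self-adjoint plus coercive implies isomorphism argument are exactly what that reference does, so there is nothing to compare. One small remark: in part (2) the displayed chain $-\beta^2\langle x_1,x_2\rangle=-2\beta\|x_2\|^2$ in the statement is a typographical slip for $-\beta^2\langle x_1,x_2\rangle-2\beta\|x_2\|^2$, which is precisely what your computation of $\langle -\beta x_2,\beta x_1+2x_2\rangle_H$ yields; and for (3) the identity $\langle A^2x_1,x_2\rangle_H=\langle Ax_1,Ax_2\rangle_H$ you use tacitly requires $x_2\in\mathcal D(A)$, i.e.\ the correct domain $\mathcal D(\mathcal A)=\mathcal D(A^2)\times\mathcal D(A)$ rather than the $\mathcal D(A^2)\times H$ printed in the paper.
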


We define for $x=(x_1,x_2)^{\top}\in\mathcal{H}$
\begin{align*}
       \mathscr{E}(x)&=\E\big{[}\|x\|^2_{\mathcal{H}}+M(\|B^{\frac{1}{2}}x_1\|_H^2)\big{]}.
\end{align*}
\begin{theorem}\label{theo: stability}Suppose that Assumption \eqref{assu: stability} is satisfied and $\mathscr{E}(\mathfrak{u}_0)<\infty$. Let $\mathfrak{u}$ be the unique mild global solution to Equation \eqref{SDE1}. Let $K$ be the constant given in Part $(3)$ of Assumption \eqref{assu: stability}. If $K=0$, then the solution is exponentially mean-square stable, that is there exist constants $C<\infty$, $\lambda>0$ such that for all $t\geq 0$,
\begin{align*}
      \E\|\mathfrak{u}(t)\|^2_{\mathcal{H}}\leq Ce^{-\lambda t}\mathscr{E}(\mathfrak{u}_0).
\end{align*}
If $K>0$, then $$\sup_{t\geq
0}\E\|\mathfrak{u}(t)\|^2_{\mathcal{H}}<\infty.$$

\end{theorem}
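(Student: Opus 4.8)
The plan is to run a Lyapunov argument with the quadratic form twisted by the operator $P$ of Lemma \ref{lem: property of operator P}. Writing $M(y)=\int_0^y m(r)\,dr$, so that $M'=m$ and $M\ge 0$, set
\begin{align*}
   V(x)=\langle Px,x\rangle_{\mathcal H}+2M(\|B^{1/2}x_1\|_H^2),\qquad x=(x_1,x_2)^\top\in\mathcal H .
\end{align*}
Part (1) of Lemma \ref{lem: property of operator P} together with $M\ge 0$ gives, for suitable $0<c_1\le c_2<\infty$,
\begin{align*}
   c_1\big(\|x\|_{\mathcal H}^2+M(\|B^{1/2}x_1\|^2)\big)\le V(x)\le c_2\big(\|x\|_{\mathcal H}^2+M(\|B^{1/2}x_1\|^2)\big),
\end{align*}
in particular $\|x\|_{\mathcal H}^2\le V(x)$ and $\E V(\mathfrak u_0)\le c_2\,\mathscr E(\mathfrak u_0)<\infty$, so it suffices to control $t\mapsto\E V(\mathfrak u(t))$.

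The next step is to apply the It\^o formula to $V(\mathfrak u(t))$. Since $\mathfrak u$ is only a mild solution, this is not directly legitimate; as in the proof of Theorem \ref{theo: lifespan} it is done by replacing $\mathcal A$ with its Yosida approximations $\mathcal A_n$ (for which the corresponding equation has a strong solution), applying It\^o to $V$ along the approximate solutions, and passing to the limit using that they converge to $\mathfrak u$. Granting this, the computation runs as follows. The jump part of $\mathfrak u$ acts only on the second coordinate, so $M(\|B^{1/2}u(t)\|^2)$ has no jump term and, since $du=u_t\,dt$, its drift is $2m(\|B^{1/2}u\|^2)\langle Bu,u_t\rangle$. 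For the quadratic part, with $G(\mathfrak u,z)=(0,g(\mathfrak u,z))^\top$ one has $P(0,g)^\top=(\beta A^{-2}g,\,2g)^\top$, hence the second-order jump term equals $2\int_Z\|g(\mathfrak u,z)\|_H^2\,\nu(dz)$; the drift coming from $\mathcal A\mathfrak u+F(\mathfrak u)$ is evaluated from parts (2)--(3) of Lemma \ref{lem: property of operator P} together with $2\langle P\mathfrak u,(0,-m(\|B^{1/2}u\|^2)Bu)^\top\rangle_{\mathcal H}=-2\beta\, m(\|B^{1/2}u\|^2)\|B^{1/2}u\|^2-4m(\|B^{1/2}u\|^2)\langle Bu,u_t\rangle$. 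The $\langle u,u_t\rangle$-terms from (2) and (3) cancel, and the $m(\|B^{1/2}u\|^2)\langle Bu,u_t\rangle$-terms cancel against the drift of $2M$ (the factor $2$ in front of $M$ is chosen exactly for this). Taking expectations to kill the local martingale, one is left with
\begin{align*}
   \frac{d}{dt}\E V(\mathfrak u(t))=\E\Big[-2\beta\|\mathfrak u(t)\|_{\mathcal H}^2-2\beta\, m(\|B^{1/2}u(t)\|^2)\,\|B^{1/2}u(t)\|^2+2\int_Z\|g(\mathfrak u(t),z)\|_H^2\,\nu(dz)\Big].
\end{align*}

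Now I would use part (4) of Assumption \ref{assu: stability}, $y\,m(y)\ge\alpha M(y)$, to bound $-2\beta\, m(\|B^{1/2}u\|^2)\|B^{1/2}u\|^2\le-2\beta\alpha M(\|B^{1/2}u\|^2)$, and part (3), $\int_Z\|g(x,z)\|^2\nu(dz)\le R_g^2\|x\|_{\mathcal H}^2+K$. Combined with the two-sided comparison $V\asymp\|x\|_{\mathcal H}^2+M(\|B^{1/2}x_1\|^2)$ this yields a differential inequality
\begin{align*}
   \frac{d}{dt}\E V(\mathfrak u(t))\le-\lambda\,\E V(\mathfrak u(t))+2K
\end{align*}
for a suitable $\lambda>0$; here one uses that the dissipation produced by the damping term (via $P$) dominates the noise intensity $R_g$, which is the point at which the constants $\|P\|_{\mathcal L(\mathcal H)}$, $\alpha$ and the damping enter. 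If $K=0$, a Gronwall-type estimate gives $\E V(\mathfrak u(t))\le e^{-\lambda t}\E V(\mathfrak u_0)$, whence $\E\|\mathfrak u(t)\|_{\mathcal H}^2\le e^{-\lambda t}\E V(\mathfrak u_0)\le c_2 e^{-\lambda t}\mathscr E(\mathfrak u_0)$, i.e. the first assertion with $C=c_2$. If $K>0$, the same inequality gives $\E V(\mathfrak u(t))\le e^{-\lambda t}\E V(\mathfrak u_0)+2K/\lambda$, bounded uniformly in $t\ge0$, hence $\sup_{t\ge0}\E\|\mathfrak u(t)\|_{\mathcal H}^2\le\sup_{t\ge0}\E V(\mathfrak u(t))<\infty$.

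The algebra above is routine once the formulas of Lemma \ref{lem: property of operator P} are in hand; the genuine obstacle is justifying the It\^o formula for $V$ along the mild solution. This requires carrying out the Yosida-approximation and passage-to-the-limit scheme of Theorem \ref{theo: lifespan}, controlling the stochastic convolution and the nonlinear term $m(\|B^{1/2}u\|^2)Bu$ uniformly in the approximation parameter and keeping track of the error terms introduced when $\mathcal A$ is replaced by $\mathcal A_n$; the integrability needed for the expectations of the stochastic integrals to vanish is supplied by the a priori bounds already available for the global solution $\mathfrak u$ from Theorems \ref{theo: existence and uniqueness of locally mild solution} and \ref{theo: lifespan}.
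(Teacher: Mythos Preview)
Your strategy and Lyapunov function coincide with the paper's: the authors take $\Phi(x)=\tfrac12\langle Px,x\rangle_{\mathcal H}+M(\|B^{1/2}x_1\|^2)$ (so $V=2\Phi$), apply It\^o to $e^{\lambda t}\Phi$ along an approximating sequence of strong solutions, and arrive at the same differential inequality and Gronwall conclusion. Your drift computation is in fact sharper than theirs: you notice that the $\beta^2\langle u,u_t\rangle$ contribution from part~(3) of Lemma~\ref{lem: property of operator P} cancels \emph{exactly} against the one from the damping via part~(2), leaving the clean term $-\beta\|\mathfrak u\|_{\mathcal H}^2$. The paper instead bounds $\langle P\mathfrak u,\mathcal A\mathfrak u\rangle$ in isolation by Cauchy--Schwarz and $A\ge\mu I$, picking up an extra $2C\beta^2$ with $C=\max\{\tfrac1{2\mu^2},\tfrac12\}$; this forces the implicit smallness condition $R_g^2<\beta-2C\beta^2$ when choosing $\lambda$, whereas your route only needs $R_g^2<\beta$.

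One point needs correcting. The approximation in Theorem~\ref{theo: lifespan} (reproduced verbatim in the paper's proof of this theorem) is \emph{not} Yosida approximation of $\mathcal A$. As the introduction explains, replacing $\mathcal A$ by $\mathcal A_n$ runs into trouble for compensated Poisson integrals because the factorization argument for uniform convergence of the approximating stochastic convolutions is unavailable. What the paper actually does is freeze the solution on $[0,\tau_n]$, form bounded data $\tilde F,\tilde G$, and regularize the \emph{data} by $mR(m;\mathcal A)$, so that the linear inhomogeneous problem with the \emph{same} generator $\mathcal A$ has a global strong solution $v_m$; It\^o is applied to $v_m$ and one passes $m\to\infty$ via $\E\sup_{[0,T]}\|v_m-v\|_{\mathcal H}^2\to 0$ and the Davis-type maximal inequality for Poisson stochastic convolutions. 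If by ``the scheme of Theorem~\ref{theo: lifespan}'' you mean this procedure, your argument goes through; but taken literally, Yosida approximation of $\mathcal A$ is not what is done and would require a separate justification here.
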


\section{Stochastic nonlinear beam
equations}\label{section_application}

In this section we will examine that all the results achieved in the
preceding section can be applied to the following problem
\begin{align}\label{eq-1-section-beam}
    \frac{\partial^2 u}{\partial t^2}-m\left(\int_D|\triangledown u|^2dx\right)\triangle u &+\gamma\triangle^2u+\Upsilon\left(t,x,u,\frac{\partial u}{\partial t},\triangledown u\right)\nonumber
   \\
   &=\int_Z\Pi(t,x,u,\frac{\partial u}{\partial t},\triangledown
    u,z)\tilde{N}(t,dz)
\end{align}
with the hinged boundary condition
\begin{align}\label{eq-cod-1}
u=\triangle u=0\ \text{on}\ \partial D.
\end{align}
Here $\Upsilon,\Pi:[0,T]\times D\times \mathbb{R}\times
\mathbb{R}^n\times\mathbb{R}\rightarrow\mathbb{R}$ are Borel
functions, $m\in\mathcal{C}^1(\mathbb{R}_+)$ is a nonnegative function, $\gamma>0$ and $D\subset\R^n$ is a bounded domain with a $\mathcal{C}^{\infty}$- boundary $\partial D$.

We shall also make the following standing assumptions on the functions $\Upsilon$ and $\Pi$ under considerations.
\begin{enumerate}
\item
For every $n\in\N$, there exist constants $L_N$ and $L$ such that for all $t\in [0,T]$,
 $x\in D$, $c_1,c_2\in\mathbb{R}$ and for all
$a_1,a_2\in\mathbb{R}$, $b_1,b_2\in\mathbb{R}^n$ satisfying
$|a_1|,|a_2|\leq N$ and $|b_1|,|b_2|\leq N$,
\begin{align}\label{assu_sec_beam-Lip. con._1}
    |\Upsilon(t,x,a_1,b_1,c_1)-\Upsilon(t,x,a_2,b_2,c_2)|\leq
    L_N|a_1-a_2|+L_N|b_1-b_2|+L|c_1-c_2|.
\end{align}
\item There exist constant $L_{\Upsilon}$ such that for all $t\in [0,T]$,
$x\in D$, $a\in\mathbb{R}$, $b\in\mathbb{R}^n$ and $c\in\mathbb{R}$,
\begin{align}\label{assu_sec_beam-growth-cond.1}
        |\Upsilon(t,x,a,b,c)|^2\leq
    L_{\Upsilon}(1+|c|^2).
\end{align}
\item There exist constant $L'$ such that for all $t\in [0,T]$,
 $x\in D$, $c_1,c_2\in\mathbb{R}$
 $a_1,a_2\in\mathbb{R}$ and $b_1,b_2\in\mathbb{R}^n$,
\begin{align}\label{assu_sec_beam-lip.con.2}
    &\int_Z|\Pi(t,x,a_1,b_1,c_1,z)-\Pi(t,x,a_2,b_2,c_2,z)|^2\nu(dz)\nonumber\\
    &\hspace{4cm}\leq
    L'|a_1-a_2|^2+L'|b_1-b_2|^2+L'|c_1-c_2|^2.
\end{align}
\item There exist constant $L_{\Pi}$ such that for all $t\in [0,T]$,
$x\in D$, $a\in\mathbb{R}$, $b\in\mathbb{R}^n$ and $c\in\mathbb{R}$,
\begin{align}\label{assu_sec_beam-growth-cond.2}
        \int_Z|\Pi(t,x,a,b,c,z)|^2\nu(dz)\leq
    L_{\Pi}(1+|c|^2).
\end{align}
\end{enumerate}

Let $H=L^2(D)$. Let $A$ and $B$ be both the Laplacian with Dirichlet
boundary conditions. That is
\begin{align*}
       &A \psi=-\Delta\psi,\ \ \psi\in \mathcal{D}(A),\\
       &\mathcal{D}(A)=H^2(D)\cap H^1_0(D).
\end{align*}
Then $A\geq \mu I$, for some $\mu>0$. To see this, since
$\mathcal{D}(A)\subset H^1_0(\Omega)$, on the basis of Poincar\'{e}
inequality, we have
\begin{align*}
      \langle A\psi, \psi\rangle_{L^2(D)}=-\int_D \Delta\psi\cdot\psi
      dx=\int_D|\nabla \psi|^2dx\geq C |\psi|_{L^2(D)},\ \ \text{for
      }\psi\in\mathcal{D}(A).
\end{align*}
Note that our results are thus valid also for unbounded domains satisfying Poincar\'{e} inequality.
 Let us set
\begin{align}\label{sec_beam_function_f}
   f: [0,T]\times\mathcal{D}(A)\times L^2(D)\ni (t,\psi,\phi)\mapsto
   \Upsilon(t,\cdot,\psi(\cdot),\nabla\psi(\cdot),\phi(\cdot))\in
   L^2(D)
\end{align}
and
\begin{align}\label{sec_beam_function_g}
    g:[0,T]\times\mathcal{D}(A)\times L^2(D)\ni
    (t,\psi,\phi)\mapsto
    \Pi(t,\cdot,\psi(\cdot),\nabla\psi(\cdot),\phi(\cdot))\in
    L^2(D).
\end{align}
In such case, one can easily see that equation \eqref{eq-1-section-beam} is a
particular case of equation \eqref{SDE}. In order to make use of the results
presented in the preceding section, one also need to verify that all the
assumptions \ref{growth condition of f}, \ref{growth condition of
g}, \ref{globally Lipschitz condition of g} and \ref{locally
Lipschitz condition} given in the preceding section on the functions
$f$ and $g$ are fulfilled. To prove the local lipschitz continuity
of the function $f$, we first notice first that
$\mathcal{D}(A)\subset H^2(D)$. Hence by the Sobolev embedding
theorem, when $n=1$, we have $H^2(D)\hookrightarrow \mathcal{C}^1(D)$, so
there exists a constant $M$ such that
$|\psi|_{L^{\infty}(D)}+|\nabla\psi|_{L^{\infty}(D)}\leq
M|\psi|_{H^2(D)}.$ Take $\phi_i\in H$ and
$\psi_i\in\mathcal{D}(A)\subset H^2(D)$, $i=1,2$ such that
$|\psi_i|_{H^2(D)}\leq N$. It follows that
$|\psi|_{L^{\infty}(D)}\leq MN$ and $|\nabla\psi|_{L^{\infty}(D)}\leq
MN$ which gives that $|\psi(x)|\leq MN$ and $|\nabla\psi(x)|\leq MN$
for almost all $x\in D$. We obtain on the basis of the first
assumption \ref{assu_sec_beam-Lip. con._1} and the boundedness assumption of the domain $D$ that
\begin{align}\label{eq-lip-v-1}
   &|f(t,\psi_1,\phi_1)-f(t,\psi_2,\phi_2)|_{L^2(D)}\nonumber\\
   &=\int_D|\Upsilon(t,\psi_1(x),\nabla\psi_1(x),\phi_1(x))-
   \Upsilon(t,\psi_2(x),\nabla\psi_2(x),\phi_2(x))|^2dx\nonumber\\
   &\leq \int_D
   L_{MN}|\psi_1(x)-\psi_2(x)|^2+L_{MN}|\nabla\psi_1(x)-\nabla\psi_2(x)|^2\\
   &\hspace{2cm}+L|\phi_1(x)-\phi_2(x)|^2dx\nonumber\\
   &\leq
   L_{MN}|D||\psi_1-\psi_2|^2_{L^{\infty}(D)}+L_{MN}|D||\nabla\psi_1-\nabla\psi_2|^2_{L^{\infty}(D)}+L|\phi_1-\phi_2|^2_{L^2(D)}\nonumber\\
   &\leq M^2|D|L_{MN}|\psi_1-\psi_2|^2_{H^2(D)}+L|\phi_1-\phi_2|^2_{L^2(D)},\nonumber
\end{align}
In particular, if the function $\Upsilon$ doesn't depend on the third variable, that is $f(t,\psi,\phi)=\Upsilon(t,\cdot,\psi(\cdot),\phi(\cdot))$. The Sobolev embedding theorem tells us that $H^2(D)\hookrightarrow\mathcal{C}(D)$, for $n\leq 3$, which implies that there exists $K$ such that
$|\psi|_{L^{\infty}(D)}\leq
K|\psi|_{H^2(D)}.$ Take $\phi_i\in H$ and
$\psi_i\in\mathcal{D}(A)\subset H^2(D)$, $i=1,2$ such that
$|\psi_i|_{H^2(D)}\leq N$. Again, in view of the assumption \ref{assu_sec_beam-Lip. con._1}, we infer that
\begin{align}\label{eq-lip-v-2}
   |f(t,\psi_1,\phi_1)-f(t,\psi_2,\phi_2)|_{L^2(D)}
   &=\int_D|\Upsilon(t,\psi_1(x),\phi_1(x))-
   \Upsilon(t,\psi_2(x),\phi_2(x))|^2dx\nonumber\\
   &\leq \int_D
   L_{KN}|\psi_1(x)-\psi_2(x)|^2+L|\phi_1(x)-\phi_2(x)|^2dx\\
   &\leq K^2|D|L_{KN}|\psi_1-\psi_2|^2_{H^2(D)}+L|\phi_1-\phi_2|^2_{L^2(D)}.\nonumber
\end{align}

From \eqref{eq-lip-v-1} and \eqref{eq-lip-v-2}, we see that the function $f$ defined by \ref{sec_beam_function_f} is locally Lipschitz continuous which verifies Assumption \ref{locally Lipschitz condition}.

For the growth condition \ref{growth condition of f} of $f$, by
making use of Assumption \ref{assu_sec_beam-growth-cond.1}, it can
be easily achieved as follows
\begin{align*}
    |f(t,\psi,\phi)|^2_{L^2(D)}&=\int_D|\Upsilon(t,x,\psi(x),\nabla\psi(x),\phi(x))|^2dx\\
    &\leq \int_D L_{\Upsilon}(1+|\phi(x)|^2)dx\\
    &\leq L_{\Upsilon}|D|(1+|\phi|^2_{L^2(D)})\\
    &\leq L_{\Upsilon}|D|(1+|\psi|^2_{H^2(D)}+|\phi|^2_{L^2(D)}).
\end{align*}
Let us now show that the global Lipschitz condition \eqref{assu: globally
Lipschtiz condition} are satisfied for the function $g$ defined by
\eqref{sec_beam_function_g}. Take $\phi_i\in L^2(D)$ and
$\psi_i\in\mathcal{D}(A)$. By using Assumption
\ref{assu_sec_beam-lip.con.2}, an analogous calculation as verifying the Lipschitz continuity of $f$ before,
shows that if $n=1$, then
\begin{align*}
    & \int_Z|g(t,\psi_1,\phi_1)-g(t,\psi_2,\phi_2)|^2_{L^2(D)}\nu(dz)\\
     &=\int_Z\int_D|\Pi(t,x,\psi_1(x),\nabla\psi_1(x),\phi_1(x))-\Pi(t,x,\psi_2(x),\nabla\psi_2(x),\phi(x))|^2dx\nu(dz)\\
&=\int_D\int_Z|\Pi(t,x,\psi_1(x),\nabla\psi_1(x),\phi_1(x))-\Pi(t,x,\psi_2(x),\nabla\psi_2(x),\phi(x))|^2\nu(dz)dx\\\
&\leq
\int_DL'|\psi_1(x)-\psi_2(x)|^2+L'|\nabla\psi_1(x)-\nabla\psi_2(x)|^2+L'|\phi_1(x)-\phi_2(x)|^2dx\\
&=L'|\psi_1-\psi_2|^2_{L^2(D)}+L'|\nabla\psi_1-\nabla\psi_2|^2_{L^2(D)}+L'|\phi_1-\phi_2|^2_{L^2(D)}\\
&\leq
L'|D||\psi_1-\psi_2|^2_{L^{\infty}(D)}+L'|D||\nabla\psi_1-\nabla\psi_2|^2_{L^{\infty}(D)}+L'|\phi_1-\phi_2|^2_{L^2(D)}\\
&\leq
L'|D|M^2|\psi_1-\psi_2|^2_{H^2(D)}+L'|\phi_1-\phi_2|^2_{L^2(D)},
\end{align*}
and if $n\leq 3$ and $\Pi$ does depends on the third variable, then
 \begin{align*}
     \int_Z|g(t,\psi_1,\phi_1)&-g(t,\psi_2,\phi_2)|^2_{L^2(D)}\nu(dz)\\
     &=\int_Z\int_D|\Pi(t,x,\psi_1(x),\phi_1(x))-\Pi(t,x,\psi_2(x),\phi(x))|^2dx\nu(dz)\\
&\leq
\int_DL'|\psi_1(x)-\psi_2(x)|^2+L'|\phi_1(x)-\phi_2(x)|^2dx\\
&=L'|\psi_1-\psi_2|^2_{L^2(D)}+L'|\nabla\psi_1-\nabla\psi_2|^2_{L^2(D)}+L'|\phi_1-\phi_2|^2_{L^2(D)}\\
&\leq
L'|D|K^2|\psi_1-\psi_2|^2_{H^2(D)}+L'|\phi_1-\phi_2|^2_{L^2(D)},
\end{align*}
which verifies the global Lipschitz condition \eqref{assu: globally
Lipschtiz condition} of the function $g$. In exactly the same
manner, we have
\begin{align*}
     \int_Z|g(t,\psi,\phi)|^2_{L^2(D)}\nu(dz)
     &=\int_Z\int_D|\Pi(t,x,\psi(x),\nabla\psi(x),\phi(x))|^2dx\nu(dz)\\
     &=\int_D\int_Z|\Pi(t,x,\psi(x),\nabla\psi(x),\phi(x))|^2\nu(dz)dx\\
     &\leq L_{\Pi}\int_D(1+|\phi(x)|^2)dx\\
     &\leq L_{\Pi}|D|(1+|\psi|^2_{H^2(D)}+|\phi|^2_{L^2(D)}).
\end{align*}
To deal with the Equation \eqref{eq-1-section-beam} with the clamped
boundary condition
$$u=\frac{\partial u}{\partial n}=0\ \text{on}\ \partial D,$$
we define an operator $C$ by
\begin{align*}
\mathcal{D}(C)&=\{\varphi\in H^4(D):\ \varphi=\frac{\partial
\varphi}{\partial
n}=0\ \text{on}\ \partial D\}\\
 C\varphi &=\triangle^2\varphi, \text{ for }\varphi\in \mathcal{D}(C).
\end{align*}
It is easy to observe that the operator $C$ is positive. To see
this, take $\varphi\in\mathcal{D}(C)$. Then the Green formula tells
us that
\begin{align*}
   \langle C\varphi,\varphi\rangle_H=\int_D
   \triangle^2\varphi\cdot\varphi\
   dx=\int_D(\triangle\varphi,\triangle\varphi) dx=\|\triangle
   \varphi\|_H^2\geq0.
\end{align*}
Further, by Lemma 9.17 in \cite{[Gilbarg]} , since
$\mathcal{D}(C)\subset H^2(D)\cap H^2_0(D)$, we have
\begin{align*}
 \langle C\varphi,\varphi\rangle_H=\|\triangle
   \varphi\|_H^2\geq \frac{1}{K}\|u\|^2_H,\ \ \phi\in\mathcal{D}(C),
\end{align*}
where the constant $K$ is independent of $\varphi$. This part also
shows that the operator $C$ is uniformly positive with $C\geq
\frac{1}{K}$. In this case, we set $$A=C^{\frac{1}{2}}.$$ Then by the
uniqueness of positive square root operator, we find out that
$A=-\triangle$ and $\mathcal{D}(A)=\{\varphi\in H^2(D):\
\varphi=\frac{\partial \phi}{\partial n}=0\text{ on }\partial D\}$.
Since $\mathcal{D}(A)\subset H^1_0(D)$, by the Poincar\'{e}
inequality, we infer that $A\geq \mu I$, for some $\mu>0$.
Analogously, we define
\begin{align*}
       &B \psi=-\Delta\psi,\ \ \psi\in \mathcal{D}(A),\\
       &\mathcal{D}(B)=H^2(D)\cap H^1_0(D).
\end{align*}
By adapting the definitions \eqref{sec_beam_function_f},
\eqref{sec_beam_function_g} of the functions $f$ and $g$ and
assumptions \eqref{assu_sec_beam-Lip.
con._1}-\eqref{assu_sec_beam-growth-cond.2} of the functions
$\Upsilon$ and $\Pi$, all the requirements on the functions $f$ and
$g$ are fulfilled in the same way as above.

\section{Proofs}
In order to prove Theorem \ref{theo: exsitence and uniqueness of mild
solution}, we will first establish the following auxiliary Lemma.

\begin{proposition}\label{Lem:predictability}Suppose that $Z:\mathbb{R}_+\time\Omega\time \Omega\rightarrow\mathcal{H}$ is a progressively measurable process.
Let $X(t)=e^{t\mathcal{A}}Z(t)$, $t\geq0$ and
$Y(t)=e^{-t\mathcal{A}}Z(t)$ Then $X(t)$ and $Y(t)$, $t\geq0$ are
progressively measurable processes.
\end{proposition}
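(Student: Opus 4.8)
$\textbf{Proof proposal.}$
The plan is to exploit the fact that $e^{t\mathcal{A}}$ is a $C_0$-group on $\mathcal{H}$ (Remark \ref{rem-9}(1)), so that the map $(t,x)\mapsto e^{t\mathcal{A}}x$ is jointly continuous from $[0,\infty)\times\mathcal{H}$ into $\mathcal{H}$; combined with the progressive measurability of $Z$, this should give progressive measurability of $X$ and $Y$ by a routine composition argument. Concretely, fix $T\geq 0$; I must show that the restriction of $X$ to $[0,T]\times\Omega$ is $\mathcal{B}([0,T])\otimes\mathcal{F}_T$-measurable. Since $Z$ is progressively measurable, the map $\Phi:[0,T]\times\Omega\ni(t,\omega)\mapsto(t,Z(t,\omega))\in[0,T]\times\mathcal{H}$ is $\mathcal{B}([0,T])\otimes\mathcal{F}_T/\mathcal{B}([0,T])\otimes\mathcal{B}(\mathcal{H})$-measurable. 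The map $\Psi:[0,T]\times\mathcal{H}\ni(t,x)\mapsto e^{t\mathcal{A}}x\in\mathcal{H}$ is continuous, hence Borel measurable, and $X|_{[0,T]\times\Omega}=\Psi\circ\Phi$, which is therefore $\mathcal{B}([0,T])\otimes\mathcal{F}_T$-measurable. The same argument with $\Psi'(t,x)=e^{-t\mathcal{A}}x$ handles $Y$.

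The one point that needs a little care is the joint continuity of $\Psi$. This follows from the strong continuity of the group together with uniform boundedness of $\|e^{t\mathcal{A}}\|_{\mathcal{L}(\mathcal{H})}$ on compact time intervals: writing $e^{t\mathcal{A}}x-e^{s\mathcal{A}}y=e^{s\mathcal{A}}(e^{(t-s)\mathcal{A}}x-x)+e^{s\mathcal{A}}(x-y)$, the second term is controlled by $\sup_{|s|\leq T}\|e^{s\mathcal{A}}\|_{\mathcal{L}(\mathcal{H})}\,\|x-y\|_{\mathcal{H}}$ and the first by $\sup_{|s|\leq T}\|e^{s\mathcal{A}}\|_{\mathcal{L}(\mathcal{H})}\,\|e^{(t-s)\mathcal{A}}x-x\|_{\mathcal{H}}\to 0$ as $t\to s$ by strong continuity. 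In fact, since $e^{t\mathcal{A}}$ is a unitary group here, $\|e^{s\mathcal{A}}\|_{\mathcal{L}(\mathcal{H})}=1$ for all $s$, which simplifies the bound further, but the argument does not rely on unitarity, only on the local boundedness guaranteed by the uniform boundedness principle for $C_0$-semigroups.

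I do not expect a genuine obstacle here; the statement is essentially a measurability bookkeeping lemma. The only mild subtlety is to make sure ``progressively measurable'' is being used in the standard sense (measurability of the restriction to $[0,T]\times\Omega$ with respect to $\mathcal{B}([0,T])\otimes\mathcal{F}_T$ for every $T$), and to note that one should work interval by interval in $T$ rather than trying to treat $[0,\infty)$ at once — this is harmless since the defining property of progressive measurability is itself phrased for each finite $T$. With that convention fixed, the composition-of-measurable-maps argument closes the proof.
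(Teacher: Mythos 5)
Your proposal is correct and follows essentially the same route as the paper: compose the progressively measurable map $(t,\omega)\mapsto(t,Z(t,\omega))$ with the map $(t,x)\mapsto e^{t\mathcal{A}}x$ and use measurability of compositions, then repeat with $-\mathcal{A}$ for $Y$. If anything, your verification of \emph{joint} continuity of $(t,x)\mapsto e^{t\mathcal{A}}x$ (via local boundedness plus strong continuity) is tighter than the paper's appeal to separate continuity, so no gap to report.
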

\begin{proof}[Proof of Proposition \ref{Lem:predictability}]
   Define a function $\alpha:\mathbb{R}_+\times\mathcal{H}\ni(t,x)\mapsto e^{t\mathcal{A}}x\in\mathcal{H}$.  Since $e^{t\mathcal{A}}$, $t\geq0$ is a contraction $C_0$-semigroup, so $\|e^{t\mathcal{A}}\|_{\mathcal{L}(\mathcal{H})}\leq1$ and for every $x\in\mathcal{H}$, $\alpha(\cdot,x)$ is continuous. Also, for every $t\geq0$, $\alpha(t,\cdot)$ is continuous. Indeed, let us fix $x_0\in\mathcal{H}$. Then for every $x\in\mathcal{A}$
   \begin{align*}
          \|\alpha(t,x)-\alpha(t,x_0)\|_{\mathcal{H}}=\|e^{t\mathcal{A}}(x-x_0)\|_{\mathcal{H}}\leq\|x-x_0\|_{\mathcal{H}}.
   \end{align*}
   Thus $\alpha(t,\cdot)$ is continuous. This shows that the function $\alpha$ is separably continuous. Since by the assumption the process $Z$ is progressively measurable, one can see that the mapping
   \begin{align*}
    \mathbb{R}_+\times\Omega\ni (s,\omega)\mapsto (s,Z(s,\omega))\in\mathbb{R}_+\times\mathcal{H}
   \end{align*}
    is progressively measurable as well. So the composition mapping
   $$    \mathbb{R}_+\times\Omega\ni(s,\omega)\mapsto (s,Z(s,\omega))\mapsto \alpha(s,Z(s,\omega))\in\mathcal{H}$$
   is progressively measurable, 
and hence, the process $X(t)$, $t\geq0$ is progressively measurable. The
progressively measurability of process $Y(t)$, $t\geq0$ follows from the above
proof with $\mathcal{A}$
 replaced by $-\mathcal{A}$.
\end{proof}

\begin{proof}[Proof of Theorem \ref{theo: exsitence and uniqueness of mild solution}]
   Given $T\geq0$, we denote by $\mathcal{M}_T^2$ the set of all $\mathcal{H}$-valued progressively measurable processes $X:\mathbb{R}_+\times\Omega\rightarrow\mathcal{H}$ such that
   \begin{align*}
         \|X\|_T:=\sup_{0\leq t\leq T}(\E\|X(t)\|^2_{\mathcal{H}})^{\frac{1}{2}}<\infty.
   \end{align*}
   Then the space $\mathcal{M}^2_T$ endowed with the norm $\|X\|_{\lambda}:=\sup_{0\leq t\leq T}e^{-\lambda t}(\E\|X(t)\|^2_{\mathcal{H}})^{\frac{1}{2}}$, $\lambda>0$, is a Banach space. Note that the norms $\|\cdot\|_{\lambda}$, $\lambda\geq0$, are equivalent.
   Let us define a map $\Phi_T:\mathcal{M}^2_T\rightarrow\mathcal{M}_T^2$ by
   \begin{align*}
      (\Phi_TX)(t)=e^{t\mathcal{A}}\mathfrak{u}_0+\int_0^te^{(t-s)\mathcal{A}}F(s,X(s))\,ds+\int_0^{t}\int_Ze^{(t-s)\mathcal{A}}G(s,X(s),z)\tilde{N}(ds,dz).
   \end{align*}
   We shall show that the operator $\Phi_T$ is a contraction operator on $\mathcal{M}^2_T$ for sufficiently large values of $\lambda$. We first verify that if $X\in\mathcal{M}^2_T$, then $\Phi_TX\in\mathcal{M}^2_T$.\\
   \textbf{\textit{Claim 1}}. The process $\int_0^te^{(t-s)\mathcal{A}}F(s,X(s))\,ds$, $t\in[0,T]$, is progressively measurable.\\
   Proof of Claim $1$:
   Since $F$ is $\mathcal{B}(\mathbb{R}_+)\otimes\mathcal{B}(\mathcal{H})/\mathcal{B}(\mathcal{H})$-measurable and the process $X(t),\ t\in[0,T]$ is progressively measurable,
   so the mapping
   $$[0,T]\times\Omega\ni(t,\omega)\mapsto (t,X(t,\omega))\mapsto F(t,X(t,\omega))\in\mathcal{H}$$
   is progressively measurable as well.

    By Lemma \ref{Lem:predictability} we know that $e^{(-s)\mathcal{A}}F(s,X(s))$ is also progressively measurable. It then follows from the Fubini Theorem that the integral $\int_0^te^{(-s)\mathcal{A}}F(s,X(s))\,ds$ is $\mathcal{F}_t$-measurable.

   Since the process $[0,T]\ni t\mapsto\int_0^te^{(-s)\mathcal{A}}F(s,X(s))\,ds\in\mathcal{H}$ is continuous in $t$, this together with the adaptedness assert the progressively measurability of the process $\int_0^te^{(-s)\mathcal{A}}F(s,X(s))\,ds$, $t\in[0,T]$.    Again, by Proposition \ref{Lem:predictability}, we infer that the process
   \begin{align*}
           \int_0^te^{(t-s)\mathcal{A}}F(s,X(s))\,ds=e^{t\mathcal{A}}\int_0^te^{-s\mathcal{A}}F(s,X(s))\ ds,\ \ t\in[0,T],
   \end{align*}
    is also progressively measurable.\\
    \textbf{\textit{Claim 2}}. The process $\int_0^{t}\int_Z e^{(t-s)\mathcal{A}}G(s,X(s),z)\tilde{N}(ds,dz)$, $t\in[0,T]$ has a progressively measurable version.\\
 Proof of Claim $2$: First of all, we show that the process $\int_0^{t}\int_Ze^{(t-s)\mathcal{A}}G(s,X(s),z)\tilde{N}(ds,dz)$,
$0\leq t\leq T$ is $\mathfrak{F}$-adapted. Let us fix $t\in[0,T]$. Since by assumption the process $X$ is progressively measurable, a similar argument as in the proof of claim $1$ shows that the integrand function $e^{(t-s)\mathcal{A}}G(s,X(s),z)$ is progressively measurable.
Hence by assumption \ref{growth condition of g}, the integral process
\begin{align*}
       \int_0^r\int_Z1_{(0,t]}e^{(t-s)\mathcal{A}}G(s,X(s),z)\tilde{N}(ds,dz),\ \ r\in[0,T]
\end{align*}
is well defined. Moreover, we know from \cite{[Zhu_2010]} that
this process
is none but a martingale. In particular, for each $r\in[0,T]$, the integral
$\int_0^r\int_Z1_{(0,t]}e^{(t-s)\mathcal{A}}G(s,X(s),z)\tilde{N}(ds,dz)$ is
$\mathcal{F}_r$-measurable. By taking $r=t$, we infer that
$\int_0^t\int_Z1_{(0,t]}e^{(t-s)\mathcal{A}}G(s,X(s),z)\tilde{N}(ds,dz)$ is
$\mathcal{F}_t$-measurable.

Also, notice that the stochastic convolution process $\int_0^{t}\int_Z e^{(t-s)\mathcal{A}}G(s,X(s),z)\tilde{N}(ds,dz)$, $t\in[0,T]$ has a c\`{a}dl\`{a}g modification, see \cite{[Brzezniak_Hau_Zhu]}. Therefore, we infer that the process     $$\int_0^{t}\int_Z e^{(t-s)\mathcal{A}}G(s,X(s),z)\tilde{N}(ds,dz),\ t\in[0,T]$$
  has a progressively measurable version.

  In conclusion, the process $(\Phi_TX)(t)$, $t\geq 0$ is progressively measurable. So it remains to show that $\|\Phi_TX\|_{\lambda}^2<\infty$.

First, we find out that
  \begin{align*}
        \|\Phi_T X\|_{\lambda}&\leq \|e^{\cdot\mathcal{A}}\mathfrak{u}_0\|_{\lambda}+\left\|\int_0^{\cdot}e^{(\cdot-s)\mathcal{A}}F(s,X(s))\,ds\right\|_{\lambda}\\
        &\hspace{2cm}+\left\|\int_0^{\cdot}\int_Ze^{(\cdot-s)\mathcal{A}}G(s,X(s),z)\tilde{N}(ds,dz)\right\|_{\lambda}\\
        &=I_1+I_2+I_3.
  \end{align*}
  For the first term $I_1$, by the definition of the norm $\|\cdot\|_{\lambda}$, we have
  \begin{align*}
         I_1=\|e^{\cdot\mathcal{A}}\mathfrak{u}_0\|_{\lambda}
         =\sup_{0\leq t\leq T}e^{-\lambda t}\left(\E\left\| e^{t\mathcal{A}}\mathfrak{u}_0\right\|_{\mathcal{H}}^2\right)^{\frac{1}{2}}\leq \|\mathfrak{u}_0\|_{\mathcal{H}}.
  \end{align*}
  where we used the fact that $e^{t\mathcal{A}}$ is a contraction $C_0$-semigroup.
  Also, by using the Cauchy-Schwartz inequality and the growth conditions \eqref{growth condition of f} and \eqref{growth condition of g}, for the second term $I_2$, we obtain
  \begin{align*}
       I_2&=
           \sup_{0\leq t\leq T}e^{-\lambda t}\left(\E\left\|\int_0^t e^{(t-s)\mathcal{A}}F(s,X(s))\,ds\right\|_{\mathcal{H}}^2\right)^{\frac{1}{2}}\\
           &\leq \sup_{0\leq t\leq T}e^{-\lambda t}T^{\frac{1}{2}}\left(\E\int_0^t \|F(s,X(s))\|^2_{\mathcal{H}}ds\right)^{\frac{1}{2}}\\
           &\leq \sup_{0\leq t\leq T}e^{-\lambda t}T^{\frac{1}{2}}K_f^{\frac{1}{2}}\left(\E\int_0^t (1+ \|X(s)\|^2_{\mathcal{H}})\,ds\right)^{\frac{1}{2}}\\
           &\leq TK_f^{\frac{1}{2}}+\sup_{0\leq t\leq T}T^{\frac{1}{2}}K_f^{\frac{1}{2}}\left(\int_0^t e^{-2\lambda(t-s)}ds\sup_{0\leq s\leq T}\E\ e^{-2\lambda s}\|X(s)\|_{\mathcal{H}}^2\right)^{\frac{1}{2}}\\
           &\leq TK_f^{\frac{1}{2}}+\frac{1}{2\lambda}T^{\frac{1}{2}}K_f^{\frac{1}{2}}\sup_{0\leq s\leq T}e^{-\lambda s}\left(\E \|X(s)\|_{\mathcal{H}}^2\right)^{\frac{1}{2}}.
  \end{align*}

In the same way, we have
  \begin{align*}
       I_3
          &=\sup_{0\leq t\leq T}e^{-\lambda t}\left(\E\left\|\int_0^t\int_Ze^{(t-s)\mathcal{A}}G(s,X(s),z)\tilde{N}(ds,dz)\right\|_{\mathcal{H}}^2\right)^{\frac{1}{2}}\\
          &=\sup_{0\leq t\leq T}e^{-\lambda t}\left(\E\int_0^t\int_Z\|e^{(t-s)\mathcal{A}}G(s,X(s),z)\|_{\mathcal{H}}^2\nu(dz)\,ds\right)^{\frac{1}{2}}\\
           &\leq K_g^{\frac{1}{2}}\sup_{0\leq t\leq T}e^{-\lambda t}\left(\E\int_0^t(1+\|X(s)\|^2_{\mathcal{H}})\,ds\right)^{\frac{1}{2}}\\
           &\leq K_g^{\frac{1}{2}}T^{\frac{1}{2}}+K_g^{\frac{1}{2}}\left(\int_0^t e^{-2\lambda(t-s)}ds\sup_{0\leq s\leq T}\E\ e^{-2\lambda s}\|X(s)\|_{\mathcal{H}}^2\right)^{\frac{1}{2}}\\
           &\leq K_g^{\frac{1}{2}}T^{\frac{1}{2}}+\frac{1}{2\lambda}K_g^{\frac{1}{2}}\|X(s)\|_{\lambda},
  \end{align*}
  where the second equality follows from the isometry property of It\^{o} integral w.r.t. compensated Poisson random measures and the second inequality follows from the growth condition \eqref{growth condition of g} of the function $g$.
  Combining the above three estimates, we get
  \begin{align}
       \|\Phi_T(X)\|_{\lambda}^2\leq\|\mathfrak{u}_0\|_{\mathcal{H}}^2+TK_f^{\frac{1}{2}}+K_g^{\frac{1}{2}}T^{\frac{1}{2}}
       +\frac{1}{2\lambda}(T^{\frac{1}{2}}K_f^{\frac{1}{2}}+K_g^{\frac{1}{2}})\|X(s)\|_{\lambda}<\infty,
  \end{align}
  which implies that $\Phi_T(X)\in\mathcal{M}_T^2$.\\
   Now we shall show that $\Phi_T$ is a contraction provided $\lambda$ is chosen to be large enough. For this we take $X_1,X_2\in\mathcal{M}_T^2$. Then we obtain the following inequlaity
  \begin{align}\label{eq-12}
    \|\Phi_T(X_1)-\Phi_T(X_2)\|_{\lambda}
    &=\Big{\|}\int_0^{\cdot} e^{(\cdot-s)\mathcal{A}}\Big{(}F(s,X_1(s))-F(s,X_2(s))\Big{)}ds\nonumber\\
    &\hspace{1cm}+\int_0^\cdot\int_Ze^{(\cdot-s)\mathcal{A}}\Big{(}G(s,X_1(s),z)-G(s,X_2(s),z)\Big{)}\tilde{N}(ds,dz)\Big{\|}_{\lambda}^2\nonumber\\
    &\leq \left\|\int_0^\cdot e^{(\cdot-s)\mathcal{A}}\Big{(}F(s,X_1(s))-F(s,X_2(s))\Big{)}ds\right\|_{\lambda}\\
    &\hspace{1cm}+\left\|\int_0^\cdot\int_Ze^{(\cdot-s)\mathcal{A}}\Big{(}G(s,X_1(s),z)-G(s,X_2(s),z)\Big{)}\tilde{N}(ds,dz)\right\|_{\lambda}\nonumber\\
    &=I_4+I_5.\nonumber
  \end{align}
  Observe first that, similarly to the estimates on $I_2$ before, we have
    \begin{align*}
      I_4
      &=\sup_{0\leq t\leq T}e^{-\lambda t}\left(\E\left\|\int_0^te^{(t-s)\mathcal{A}}\Big{(}F(s,X_1(s))-F(s,X_2(s))\Big{)}ds\right\|_{\mathcal{H}}^2\right)^{\frac{1}{2}}\\
      &\leq T^{\frac{1}{2}}\sup_{0\leq t\leq T}e^{-\lambda t}\left(\E\int_0^t\left\|e^{(t-s)\mathcal{A}}\Big{(}F(s,X_1(s))-F(s,X_2(s))\Big{)}\right\|_{\mathcal{H}}^2ds\right)^{\frac{1}{2}}\\
      &\leq T^{\frac{1}{2}}L_f\sup_{0\leq t\leq T}e^{-\lambda t}\left(\E\int_0^t\left\|X_1(s)-X_2(s)\right\|_{\mathcal{H}}^2ds\right)^{\frac{1}{2}}\\
     &\leq T^{\frac{1}{2}}L_f\left(\int_0^Te^{-2\lambda(t-s)}ds\right)^{\frac{1}{2}}\left(\sup_{0\leq s\leq T}\E e^{-2\lambda s}\|X_1(s)-X_2(s)\|_{\mathcal{H}}^2\right)^{\frac{1}{2}}\\
      &\leq \frac{T^{\frac{1}{2}}L_f}{2\lambda}\|X_1(s)-X_2(s)\|_{\lambda},
  \end{align*}
  where we used the Cauchy-Schwartz inequality and the globally Lipschitz condition \eqref{growth condition of f} on $f$.
  Also on the basis of the It\^{o} isometry property, see \cite{[Zhu_2010]}, and the global Lipschitz condition \eqref{growth condition of g} on $g$, we find out that
  \begin{align*}
     I_5     &=\sup_{0\leq t\leq T}e^{-\lambda t}\left(\E\left\|\int_0^t\int_Ze^{(t-s)\mathcal{A}}\Big{(}G(s,X_1(s),z)-G(s,X_2(s),z)\Big{)}\tilde{N}(ds,dz)\right\|_{\mathcal{H}}^2\right)^{\frac{1}{2}}\\
          &=\sup_{0\leq t\leq T}e^{-\lambda t}\left(\E\int_0^t\int_Z\left\|e^{(t-s)\mathcal{A}}\Big{(}G(s,X_1(s),z)-G(s,X_2(s),z)\Big{)}\right\|_{\mathcal{H}}^2\nu(dz)\,ds\right)^{\frac{1}{2}}\\
          &\leq \E\int_0^T\left(e^{-\lambda t}\int_0^t\int_Z\|G(s,X_1(s),z)-G(s,X_2(s),z)\|_{\mathcal{H}}^2\nu(dz)\,ds\right)dt\\
          &\leq L_g\sup_{0\leq t\leq T}e^{-\lambda t}\left(\E\int_0^t\left\|X_1(s)-X_2(s)\right\|_{\mathcal{H}}^2ds\right)^{\frac{1}{2}}\\
 &\leq L_g\left(\int_0^Te^{-2\lambda(t-s)}ds\right)^{\frac{1}{2}}\left(\sup_{0\leq s\leq T}\E e^{-2\lambda s}\|X_1(s)-X_2(s)\|_{\mathcal{H}}^2\right)^{\frac{1}{2}}\\
      &\leq \frac{L_g}{2\lambda}\|X_1(s)-X_2(s)\|_{\lambda}.
           \end{align*}
By substituting above estimates into the right-side of inequality
\eqref{eq-12}, we get that
\begin{align}\label{proof_theo_eq_1}
      \|\Phi_T(X_1)-\Phi_T(X_2)\|_{\lambda}^2\leq \frac{T^{\frac{1}{2}}L_f+L_g}{2\lambda}\|X_1-X_2\|_{\lambda}^2.
\end{align}
Therefore, if
$\frac{T^{\frac{1}{2}}L_f+L_g}{2\lambda}\leq\frac{1}{2}$,
 then $\Phi_T$ is a strict contraction in $\mathcal{M}_T^2$. We then apply the Banach Fixed Point Theorem to
infer that $\Phi_T$ has a unique fixed point in $\mathcal{M}_T^2$.
This implies that for any $0<T<\infty$, there exists a unique (up to modification) process
$(\bar{\mathfrak{u}}(t))_{0\leq t\leq T}\in\mathcal{M}_T^2$ such that
$\bar{\mathfrak{u}}=\Phi_T(\bar{\mathfrak{u}})$ in $\mathcal{M}_T^2$. \\
Notice that
we can always find a c\`{a}d\`{a}g version satisfying \eqref{defi: mild solution}. Indeed, we know that
the uniqueness holds in the sense that if there exists another process $\mathfrak{v}\in\mathcal{M}_T^2$ satisfying $\mathfrak{v}=\Phi_T\mathfrak{v}$, then for every $t\in[0,T]$, $\bar{\mathfrak{u}}(t)=\mathfrak{v}(t)$, $\mathbb{P}$-a.s. Let $\mathcal{N}:=\{X\in\mathcal{M}^2_T: X=\Phi_T X\}$. By the uniqueness, the set $\mathcal{N}$ contains all stochastically equivalent processes of the process $\bar{\mathfrak{u}}$. Among those stochastically equivalent processes in $\mathcal{N}$, we are trying to find a version $(\mathfrak{u}(t))$ of $(\bar{\mathfrak{u}}(t))$ such that $(\mathfrak{u}(t))$ is c\`{a}dl\`{a}g and $(\mathfrak{u}(t))$ satisfies \eqref{defi: mild solution}. For this, we define
\begin{align*}
      \mathfrak{u}(t)&=(\Phi_T\bar{\mathfrak{u}})(t)\\
      &=e^{t\mathcal{A}}\mathfrak{u}_0+\int_0^te^{(t-s)\mathcal{A}}F(s,\bar{\mathfrak{u}}(s))\,ds+\int_0^t\int_Ze^{(t-s)\mathcal{A}}G(s,\bar{\mathfrak{u}}(s),z)\tilde{N}(ds,dz),\  t\in[0,T],
\end{align*}
Note that the process $\mathfrak{u}$ is, by definition, c\`{a}dl\`{a}g, see \cite{[Brzezniak_Hau_Zhu]}.  Hence, we may define
\begin{align*}
      \mathfrak{v}(t)&=(\Phi_T\mathfrak{u})(t)\\
      &=e^{t\mathcal{A}}\mathfrak{u}_0+\int_0^te^{(t-s)\mathcal{A}}F(s,\mathfrak{u}(s))\,ds+\int_0^t\int_Ze^{(t-s)\mathcal{A}}G(s,\mathfrak{u}(s-),z)\tilde{N}(ds,dz),\  t\in[0,T].
\end{align*}
We observe by the definition of two processes $\mathfrak{u}$ and $\bar{\mathfrak{u}}$ that for all $t\in[0,T]$, $\E\|\mathfrak{u}(t)-\bar{\mathfrak{u}}(t)\|_{\mathcal{H}}^2=0$. This implies that $\mathfrak{u}$ is a c\`{a}dl\`{a}g version of $\bar{\mathfrak{u}}$. From this, we also find out that $\E\int_0^T \|\mathfrak{u}(t)-\bar{\mathfrak{u}}(t)\|_{\mathcal{H}}^2 dt=0$.
It follows form the continuity of functions $F(t,x)$ and $G(t,x,z)$ in the variable $x$ that for all $t\in[0,T]$,
\begin{align*}
    \E\|\mathfrak{u}(t)-\mathfrak{v}(t)\|^2_{\mathcal{H}} \leq& 2\E\left\|\int_0^te^{(t-s)\mathcal{A}}\Big{(}F(s,\bar{\mathfrak{u}}(s))-F(s,\mathfrak{u}(s))\Big{)}ds\right\|^2_{\mathcal{H}}\\
    &+ 2\E\Big{\|}\int_0^t\int_ZG(s,\bar{\mathfrak{u}}(s),z)-G(s,\mathfrak{u}(s-),z)\tilde{N}(ds,dz)\Big{\|}^2_{\mathcal{H}}\\
=& 2\E\left\|\int_0^te^{(t-s)\mathcal{A}}\Big{(}F(s,\bar{\mathfrak{u}}(s))-F(s,\mathfrak{u}(s))\Big{)}ds\right\|^2_{\mathcal{H}}\\
    &+ 2\E\int_0^t\int_Z\Big{\|}G(s,\bar{\mathfrak{u}}(s),z)-G(s,\mathfrak{u}(s),z)\Big{\|}^2_{\mathcal{H}}\nu(dz)\,ds=0.
\end{align*}
Hence, we infer that for all $t\in[0,T]$,
\begin{align}\label{eq-701}
\begin{split}
    \mathfrak{u}(t)&=\mathfrak{v}(t)\\
    &=e^{t\mathcal{A}}\mathfrak{u}_0+\int_0^te^{(t-s)\mathcal{A}}F(s,\mathfrak{u}(s))\,ds+\int_0^t\int_Ze^{(t-s)\mathcal{A}}G(s,\mathfrak{u}(s-),z)\tilde{N}(ds,dz),\ \mathbb{P}\text{-a.s.},
    \end{split}
\end{align}
which shows that $\mathfrak{u}$ satisfies \eqref{defi: mild solution}.
Since both sides of above equality are c\`{a}d\`{a}g, the stochastically equivalence becomes $\mathbb{P}$-equivalence.  More precisely,
we obtain a pathwise uniqueness c\`{a}dl\`{a}g process in $\mathcal{M}^2_T$ such that for all $t\in[0,T]$, the equality \eqref{defi: mild solution} holds. However, if we release the c\`{a}dl\`{a}g property, the pathwise uniqueness no longer holds and we could only have stochastically uniqueness instead.\\
 Now the
uniqueness feature of a solution on any given priori time interval $[0,T]$
allows us to amalgamate them into a solution $(u(t))_{t\geq0}$ to
problem \eqref{SDE1} on the positive real half-line. Moreover, this
solution $(u(t))_{t\geq0}$ to problem \eqref{SDE1} is unique up to distinguishable.

 In other words, for $t\geq 0$,
\begin{align}\label{eq32}
      \mathfrak{u}(t)=e^{t\mathcal{A}}\mathfrak{u}_0+\int_0^te^{(t-s)\mathcal{A}}F(s,\mathfrak{u}(s))\,ds+\int_0^t\int_Ze^{(t-s)\mathcal{A}}G(s,\mathfrak{u}(s-),z)\tilde{N}(ds,dz)\ \ \mathbb{P}\text{-a.s.}.
\end{align}
Note also that since $\mathfrak{u}\in\mathcal{M}^2_T$, for every
$T>0$,
\begin{align*}
     &\E\int_0^T\|F(s,\mathfrak{u}(s))\|^2_{\mathcal{H}}ds\leq L_f^2 \E\int_0^T(1+\|\mathfrak{u}(s)\|^2_{\mathcal{H}})\,ds\leq L_f^2T(1+\|\mathfrak{u}\|_T^2)<\infty;\\
     &\E\int_0^T\int_Z\|G(s,\mathfrak{u}(s),z)\|^2_{\mathcal{H}}\nu(dz)\,ds\leq L_g^2 \E\int_0^T(1+\|\mathfrak{u}(s)\|^2_{\mathcal{H}})\,ds\leq L_g^2T(1+\|\mathfrak{u}\|_T^2)<\infty;
\end{align*}
which shows that
$F(\cdot,\mathfrak{u}(\cdot))\in\mathcal{M}^2_{loc}(\mathcal{B}\mathcal{F})$
and
$G(\cdot,\mathfrak{u}(\cdot),z)\in\mathcal{M}^2_{loc}(\hat{\mathcal{P}})$.
In conclusion, Problem \eqref{SDE1} has a unique mild solution.

Now let us suppose that $\mathfrak{u}_0\in\mathcal{D}(\mathcal{A})$,
$F(\cdot,\mathfrak{u}(\cdot))\in\mathcal{M}^2_{loc}(\mathcal{B}\mathcal{F};\mathcal{D}(\mathcal{A}))$
and
$G(\cdot,\mathfrak{u}(\cdot))\in\mathcal{M}^2_{loc}(\hat{\mathcal{P}};\mathcal{D}(\mathcal{A}))$,
where $\mathcal{D}(\mathcal{A})$ is endowed with the graph norm. We
observe that $\mathfrak{u}(t)\in\mathcal{D}(\mathcal{A})$ for every $t\geq0$.
To see this, let us us fix $t\geq0$.
 Let $R(\lambda,\mathcal{A})=(\lambda I-\mathcal{A})^{-1}$, $\lambda>0$, be the resolvent of $\mathcal{A}$.
 Since
 $\mathcal{A}R(\lambda,\mathcal{A})=\lambda R(\lambda,\mathcal{A})-I_E$, $\mathcal{A}R(\lambda,\mathcal{A})$ is
 bounded. Hence, since $G(\cdot,\mathfrak{u}(\cdot))\in\mathcal{M}^2_{loc}(\hat{\mathcal{P}};\mathcal{D}(\mathcal{A}))$,
we obtain
\begin{align*}
     R(\lambda,\mathcal{A})\int_0^t\int_Z\mathcal{A}&e^{(t-s)\mathcal{A}}G(s,\mathfrak{u}(s-),z)\tilde{N}(ds,dz)\\
     &=\int_0^t\int_ZR(\lambda,\mathcal{A})\mathcal{A}e^{(t-s)\mathcal{A}}G(s,\mathfrak{u}(s-),z)\tilde{N}(ds,dz)\\
              &=\lambda
              R(\lambda,A)\int_0^t\int_Ze^{(t-s)\mathcal{A}}G(s,\mathfrak{u}(s-),z)\tilde{N}(ds,dz)\\
              &\hspace{1cm}-\int_0^t\int_Ze^{(t-s)\mathcal{A}}G(s,\mathfrak{u}(s-),z)\tilde{N}(ds,dz).
\end{align*}
Thus, it follows that
\begin{eqnarray*}
   &&\hspace{-2truecm}\lefteqn{\int_0^t\int_Ze^{(t-s)\mathcal{A}}G(s,\mathfrak{u}(s-),z)\tilde{N}(ds,dz)} \\ &=&R(\lambda,A)\Big[\lambda\int_0^t\int_Ze^{(t-s)\mathcal{A}}G(s,\mathfrak{u}(s-),z)\tilde{N}(ds,dz)\\
   &&\hspace{2truecm}\lefteqn{-\int_0^t\int_Z\mathcal{A}e^{(t-s)\mathcal{A}}G(s,\mathfrak{u}(s-),z)\tilde{N}(ds,dz)\Big].}
\end{eqnarray*}
Since $\Rng(R(\lambda,A))=\mathcal{D}(A)$, we infer that
$\int_0^t\int_Ze^{(t-s)\mathcal{A}}G(s,\mathfrak{u}(s-),z)\tilde{N}(ds,dz)\in \mathcal{D}(\mathcal{A})$. Here $\Rng$ denotes the range. In a similar manner, we can show that\\ $\int_0^te^{(t-s)\mathcal{A}}F(s,\mathfrak{u}(s))\,ds\in \mathcal{D}(\mathcal{A})$. Hence, $\mathfrak{u}(t)\in\mathcal{D}(\mathcal{A})$.
\\
Now we are in a position to show that $\mathbb{P}\text{-a.s.}$, for all $t\geq0$,
\begin{eqnarray}
\label{eq-1-proof}
     \nonumber
     \mathcal{A}\int_0^t\int_Ze^{(t-s)\mathcal{A}}G(s,\mathfrak{u}(s-),z)\tilde{N}(ds,dz)&=&
     \int_0^t\int_Z\mathcal{A}e^{(t-s)\mathcal{A}}G(s,\mathfrak{u}(s-),z)\tilde{N}(ds,dz)\,,\\
          \mathcal{A}\int_0^te^{(t-s)\mathcal{A}}F(s,\mathfrak{u}(s))\,ds&=&
          \int_0^t\mathcal{A}e^{(t-s)\mathcal{A}}F(s,\mathfrak{u}(s))\,ds\,.
\end{eqnarray}
  For this, let us take $h \in (0,t)$. Since $\frac{e^{h\mathcal{A}}-I}{h}$ is a bounded operator, we have the following
    \begin{align*}
     &\E\left\|\mathcal{A}\int_0^t\int_Ze^{(t-s)\mathcal{A}}G(s,\mathfrak{u}(s-),z)\tilde{N}(ds,dz)-\int_0^t\int_Z\mathcal{A}e^{(t-s)\mathcal{A}}G(s,\mathfrak{u}(s-),z)\tilde{N}(ds,dz)\right\|^2\\
     &\leq 2\E\left\|\left(\frac{e^{h\mathcal{A}}-I}{h}-\mathcal{A}\right)\int_0^t\int_Ze^{(t-s)\mathcal{A}}G(s,\mathfrak{u}(s-),z)\tilde{N}(ds,dz)\right\|^2\\
     &\hspace{1cm}+2\E\left\|\int_0^t\int_Z\left(\frac{e^{h\mathcal{A}}-I}{h}-\mathcal{A}\right)e^{(t-s)\mathcal{A}}G(s,\mathfrak{u}(s-),z)\tilde{N}(ds,dz)\right\|^2\\
     &=2\E\left\|\left(\frac{e^{h\mathcal{A}}-I}{h}-\mathcal{A}\right)\int_0^t\int_Ze^{(t-s)\mathcal{A}}G(s,\mathfrak{u}(s-),z)\tilde{N}(ds,dz)\right\|^2
           \end{align*}
      \begin{align*}
     &\hspace{1cm}+2\E\int_0^t\int_Z\left\|\left(\frac{e^{h\mathcal{A}}-I}{h}-\mathcal{A}\right)e^{(t-s)\mathcal{A}}G(s,\mathfrak{u}(s),z)\right\|^2\nu(dz)\,ds\\
     &:=I(h)+II(h).
  \end{align*}
  Since we showed that $\int_0^t\int_Ze^{(t-s)\mathcal{A}}G(s,\mathfrak{u}(s-),z)\tilde{N}(ds,dz)\in \mathcal{D}(\mathcal{A})$,
  we infer that the term $I(h)$ converges to $0$ a.s. as $h\downarrow0$.

It is easy to see that the integrand
$$\left\|\left(\frac{e^{h\mathcal{A}}-I}{h}-\mathcal{A}\right)e^{(t-s)\mathcal{A}}G(s,\mathfrak{u}(s),z)\right\|^2$$
 is bounded by a function $C_1|\mathcal{A}G(s,\mathfrak{u}(s),z)|^2$ which satisfies $$\E\int_0^t\int_Z|\mathcal{A}G(s,\mathfrak{u}(s),z)|^2\nu(dz)\,ds<\infty$$ for every $t\geq0$
by the assumptions. Since $\mathcal{A}$ is the infinitesimal generator of
the $C_0$-semigroup $(e^{t\mathcal{A}})_{t\geq0}$, the integrand
converges to $0$ pointwise on $[0,t]\times\Omega\times Z$.
Therefore, the Lebesgue
Dominated Convergence Theorem on interchanging a limit and an integral is applicable. So the second term $II(h)$ converges to $0$ as $h\downarrow0$ as well. Therefore, we have
$$\E\left\|\mathcal{A}\int_0^t\int_Ze^{(t-s)\mathcal{A}}G(s,\mathfrak{u}(s-),z)\tilde{N}(ds,dz)-\int_0^t\int_Z\mathcal{A}e^{(t-s)\mathcal{A}}G(s,\mathfrak{u}(s-),z)\tilde{N}(ds,dz)\right\|^2=0,$$
which gives that for $t\geq0$,
$$\mathcal{A}\int_0^t\int_Ze^{(t-s)\mathcal{A}}G(s,\mathfrak{u}(s-),z)\tilde{N}(ds,dz)=\int_0^t\int_Z\mathcal{A}e^{(t-s)\mathcal{A}}G(s,\mathfrak{u}(s-),z)\tilde{N}(ds,dz),\ \
     \mathbb{P}\text{-a.s.}.$$
Similarly, one can show that for $t\geq0$,
 \begin{align*}
      \mathcal{A}\int_0^te^{(t-s)\mathcal{A}}F(s,\mathfrak{u}(s))\,ds=\int_0^t\mathcal{A}e^{(t-s)\mathcal{A}}F(s,\mathfrak{u}(s))\,ds,
     \ \mathbb{P}\text{-a.s.}.
 \end{align*}

On the other hand, we have, for every $0<T<\infty$,
\begin{align*}
    \E\int_0^T\int_0^t \|\mathcal{A}e^{(t-s)\mathcal{A}}F(s,\mathfrak{u}(s))\|^2_{\mathcal{H}}dsdt\leq \E\int_0^T\int_0^t \|F(s,\mathfrak{u}(s))\|^2_{\mathcal{D}(\mathcal{A})}dsdt<\infty.
\end{align*}
It follows that for every $t\in[0,T]$,
\begin{align*}
   \int_0^T\int_0^t \|\mathcal{A}e^{(t-s)\mathcal{A}}F(s,\mathfrak{u}(s))\|^2_{\mathcal{H}}dsdt<\infty,\ \mathbb{P}\text{-a.s.}
\end{align*}
Similarly, we also find out that for every $0<t<T<\infty$, $\mathbb{P}\text{-a.s.}$
\begin{align*}
     \E\int_0^T\int_0^t\int_Z \|\mathcal{A}e^{(t-s)\mathcal{A}}&G(s,\mathfrak{u}(s),z)\|^2_{\mathcal{H}}\nu(dz)\,dsdt\\
     &\leq \E\int_0^T\int_0^t\int_Z \|G(s,\mathfrak{u}(s),z)\|^2_{\mathcal{D}(\mathcal{A})}\nu(dz)\,dsdt<\infty.
\end{align*}
Now one can apply the general Fubini's Theorem and the stochastic
Fubini's theorem to obtain for every $0<s<t<\infty$
\begin{align}\label{eq-2-proof}
    \int_0^t\int_0^s\mathcal{A}&e^{(s-r)\mathcal{A}}F(r,\mathfrak{u}(r))drds\nonumber\\
      &=\int_0^t\int_r^t\mathcal{A}e^{(s-r)\mathcal{A}}F(r,\mathfrak{u}(r))\,dsdr\nonumber
        =\int_0^t\left(e^{(t-r)\mathcal{A}}-I\right)F(r,\mathfrak{u}(r))dr\nonumber\\
        &=\int_0^te^{(t-r)\mathcal{A}}F(r,\mathfrak{u}(r))dr-\int_0^tF(r,\mathfrak{u}(r))dr\,,
\end{align}
and
\begin{align}\label{eq-3-proof}
      \int_0^t\int_0^s\int_Z\mathcal{A}&e^{(s-r)\mathcal{A}}G(r,\mathfrak{u}(r-),z)\tilde{N}(dr,dz)\,ds\nonumber\\
      &=\int_0^t\int_Z\left(\int_r^t\mathcal{A}e^{(s-r)\mathcal{A}}G(r,\mathfrak{u}(r-),z)\,ds\right)\tilde{N}(dr,dz)\nonumber\\
        &= \int_0^t\int_Z\left(\int_r^t\mathcal{A}e^{(s-r)\mathcal{A}}ds\right)G(r,\mathfrak{u}(r-),z)\tilde{N}(dr,dz)\nonumber\\
        &=\int_0^t\int_Z\left(e^{(t-r)\mathcal{A}}-I\right)G(r,\mathfrak{u}(r-),z)\tilde{N}(dr,dz)\nonumber\\
        &=\int_0^t\int_Ze^{(t-r)\mathcal{A}}G(r,\mathfrak{u}(r-),z)\tilde{N}(dr,dz)-\int_0^t\int_ZG(r,\mathfrak{u}(r-),z)\tilde{N}(dr,dz)\,.
\end{align}
In the above we used the fact that since the semigroup $e^{t\mathcal{A}}$,
$t\geq0$ is strongly continuous, $t\mapsto e^{t\mathcal{A}}x$ is
differentiable for every $x\in\mathcal{D}(\mathcal{A})$. From what we have proved
in the preceding part, we know that Problem \eqref{SDE1} has a
unique mild solution which satisfies
\begin{align*}
      \mathfrak{u}(t)=e^{t\mathcal{A}}\mathfrak{u}_0+\int_0^te^{(t-s)\mathcal{A}}F(s,\mathfrak{u}(s))\,ds+\int_0^t\int_Ze^{(t-s)\mathcal{A}}G(s,\mathfrak{u}(s-),z)\tilde{N}(ds,dz)\ \ \mathbb{P}\text{-a.s.}\ \ t\geq0.
\end{align*}
Hence first by \eqref{eq-1-proof} we conclude that $\mathcal{A}\mathfrak{u}$ is integrable
$\mathbb{P}$-a.s. and then by using \eqref{eq-2-proof} and \eqref{eq-3-proof} we obtain
\begin{align*}
      \int_0^t\mathcal{A}\mathfrak{u}(s)\,ds&=\int_0^t\mathcal{A}e^{t\mathcal{A}}\mathfrak{u}_0
      +\int_0^t\int_0^s\mathcal{A}e^{(s-r)\mathcal{A}}F(r,\mathfrak{u}(r))dr\\
      &\hspace{1cm}+\int_0^t\int_0^s\int_Ze^{(s-r)\mathcal{A}}G(r,\mathfrak{u}(r-),z)\tilde{N}(dr,dz)\\
      &=e^{t\mathcal{A}}\mathfrak{u}_0-\mathfrak{u}_0+\int_0^te^{(t-r)\mathcal{A}}F(r,\mathfrak{u}(r))dr-\int_0^tF(r,\mathfrak{u}(r))dr\\
      &\hspace{1cm}+\int_0^t\int_Ze^{(t-r)\mathcal{A}}G(r,\mathfrak{u}(r),z)\tilde{N}(dr,dz)-\int_0^t\int_ZG(r,\mathfrak{u}(r-),z)\tilde{N}(dr,dz)\\
      &=\mathfrak{u}(t)-\mathfrak{u}_0-\int_0^tF(r,\mathfrak{u}(r))dr-\int_0^t\int_ZG(r,\mathfrak{u}(r-),z)\tilde{N}(dr,dz)
\end{align*}
which shows that the mild solution is also a strong solution.\\
Conversely, let $\mathfrak{u}$ be a strong solution. By
making use of the It\^{o} formula, see \cite{[Zhu_2010]}, to the function
$\psi(s,y)=e^{(t-s)\mathcal{A}}y$ and process
$\mathfrak{u}_{\lambda}(s)=R(\lambda,\mathcal{A})\mathfrak{u}(s)$,
where $R(\lambda,\mathcal{A})$ is the resolvent of $\mathcal{A}$, we
infer for every $t\geq 0$
\begin{align*}
     e^{(t-s)\mathcal{A}}R(\lambda,\mathcal{A})\mathfrak{u}(s)&-R(\lambda,\mathcal{A})\mathfrak{u}_0\\
     &=
     -\int_0^te^{(t-s)\mathcal{A}}\mathcal{A}R(\lambda,\mathcal{A})\mathfrak{u}(s)\,ds
     +\int_0^te^{(t-s)\mathcal{A}}R(\lambda,\mathcal{A})\mathcal{A}\mathfrak{u}(s)\,ds\\
     &+\int_0^te^{(t-s)\mathcal{A}}R(\lambda,\mathcal{A})F(s,\mathfrak{u}(s))\,ds\\
     &+\int_0^t\int_Ze^{(t-s)\mathcal{A}}R(\lambda,\mathcal{A})G(s,\mathfrak{u}(s-),z)\tilde{N}(ds,dz),\ \mathbb{P}\text{-a.s.}
\end{align*}
It follows that for every $t\geq 0$
\begin{align*}
     R(\lambda,\mathcal{A})e^{(t-s)\mathcal{A}}\mathfrak{u}(s)=R(\lambda,\mathcal{A})\Big{(}\mathfrak{u}_0
     &+\int_0^te^{(t-s)\mathcal{A}}F(s,\mathfrak{u}(s))\,ds\\
     &+\int_0^t\int_Ze^{(t-s)\mathcal{A}}G(s,\mathfrak{u}(s-),z)\tilde{N}(ds,dz)\Big{)}\ \mathbb{P}\text{-a.s.}
\end{align*}
Hence we have for every $t\geq 0$,
\begin{align*}
      \mathfrak{u}(t)=e^{t\mathcal{A}}\mathfrak{u}_0+\int_0^te^{t-s\mathcal{A}}F(s,\mathfrak{u}(s))\,ds+\int_0^t\int_Ze^{t-s\mathcal{A}}G(s,\mathfrak{u}(s-),z)\tilde{N}(ds,dz),\
      \ \mathbb{P}\text{-a.s.}.
\end{align*}
Thus, we infer that $\mathfrak{u}$ is of the form \eqref{strong
solution}. Furthermore, the stochastic equivalence becomes $\mathbb{P}$-equivalence in view of the c\`{a}dl\`{a}g property of the strong solution and the mild  solution. Therefore, mild solution and strong solution are
$\mathbb{P}$-equivalent or in other word, uniqueness of strong solution holds.

\end{proof}

In the proof of Theorem \ref{theo: exsitence and uniqueness of mild
solution} we will need the following Lemma.
\begin{lem}\label{lem: globally cont.}If a function $h:\mathcal{H}\rightarrow\mathcal{H}$ is locally
Lipschitz on a closed ball $B(0,R)\subset \mathcal{H}$, then the
function $\tilde{h}:\mathcal{H}\rightarrow\mathcal{H}$ defined by
\begin{eqnarray*}
    \tilde{h}(x):=\left\{
           \begin{array}{cc}
              h(x),& \text{if  } \|x\|_{\mathcal{H}}\leq R,\\
              h(\frac{Rx}{\|x\|_{\mathcal{H}}}),& \text{otherwise}.
           \end{array}
           \right.
\end{eqnarray*}
is globally Lipschitz.
\end{lem}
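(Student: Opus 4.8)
The plan is to write $\tilde h = h\circ\pi$, where $\pi:\mathcal{H}\to\overline{B(0,R)}$ is the radial retraction
\begin{align*}
  \pi(x)=\begin{cases} x, & \|x\|_{\mathcal{H}}\le R,\\ \dfrac{Rx}{\|x\|_{\mathcal{H}}}, & \|x\|_{\mathcal{H}}>R,\end{cases}
\end{align*}
and to reduce the claim to the nonexpansiveness of $\pi$. First I would record that, by the hypothesis (understood in the sense of Assumption \ref{assu: locally Lipschitz condition}), there is a constant $L_R$ with $\|h(\xi)-h(\eta)\|_{\mathcal{H}}\le L_R\|\xi-\eta\|_{\mathcal{H}}$ for all $\xi,\eta$ in the closed ball $\overline{B(0,R)}$, and that $\tilde h(x)=h(\pi(x))$ for every $x\in\mathcal{H}$; the only point to check here is the case $\|x\|_{\mathcal{H}}>R$, where $\pi(x)=Rx/\|x\|_{\mathcal{H}}$ lies on the sphere of radius $R$ and hence in $\overline{B(0,R)}$, so the two formulas for $\tilde h$ agree.

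The heart of the argument is the inequality $\|\pi(x)-\pi(y)\|_{\mathcal{H}}\le\|x-y\|_{\mathcal{H}}$ for all $x,y\in\mathcal{H}$. I would obtain this by observing that $\pi$ is precisely the metric (nearest point) projection of the Hilbert space $\mathcal{H}$ onto the nonempty closed convex set $\overline{B(0,R)}$, which is a classical $1$-Lipschitz map. If a self-contained proof is preferred, one splits into cases: the case $\|x\|_{\mathcal{H}},\|y\|_{\mathcal{H}}\le R$ is trivial; if $\|x\|_{\mathcal{H}}\le R<\|y\|_{\mathcal{H}}$ one uses the obtuse-angle characterization $\langle y-\pi(y),\,\xi-\pi(y)\rangle_{\mathcal{H}}\le 0$ for $\xi\in\overline{B(0,R)}$, applied with $\xi=x=\pi(x)$, together with the identity $\|x-y\|_{\mathcal{H}}^2=\|\pi(x)-\pi(y)\|_{\mathcal{H}}^2-2\langle y-\pi(y),\pi(x)-\pi(y)\rangle_{\mathcal{H}}+\|y-\pi(y)\|_{\mathcal{H}}^2$; and the case $\|x\|_{\mathcal{H}},\|y\|_{\mathcal{H}}>R$ reduces, after expanding both squared norms, to the elementary inequality $(\|x\|_{\mathcal{H}}-\|y\|_{\mathcal{H}})^2\ge 0$ once one uses $\|x\|_{\mathcal{H}}\|y\|_{\mathcal{H}}\ge R^2$.

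Combining the two steps gives, for all $x,y\in\mathcal{H}$,
\begin{align*}
  \|\tilde h(x)-\tilde h(y)\|_{\mathcal{H}}=\|h(\pi(x))-h(\pi(y))\|_{\mathcal{H}}\le L_R\|\pi(x)-\pi(y)\|_{\mathcal{H}}\le L_R\|x-y\|_{\mathcal{H}},
\end{align*}
so $\tilde h$ is globally Lipschitz with constant $L_R$. I do not expect a serious obstacle here; the only thing worth care is the reading of the hypothesis — in infinite dimensions ``locally Lipschitz on $\overline{B(0,R)}$'' must be taken to mean the existence of a single Lipschitz constant valid on the whole ball (as in Assumption \ref{assu: locally Lipschitz condition}), since compactness is not available to upgrade a genuinely local condition to a uniform one.
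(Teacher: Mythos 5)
Your proposal is correct, and in fact the paper states this lemma without giving any proof at all, so there is nothing to compare it against; your argument supplies exactly the standard justification. Writing $\tilde h=h\circ\pi$ with $\pi$ the radial retraction and observing that $\pi$ coincides with the metric projection of the Hilbert space $\mathcal{H}$ onto the closed convex ball $\overline{B(0,R)}$ — hence is nonexpansive — immediately yields the global Lipschitz bound with the same constant $L_R$; your self-contained case analysis (obtuse-angle characterization for the mixed case, and the reduction to $(\|x\|_{\mathcal{H}}-\|y\|_{\mathcal{H}})^2\geq 0$ using $\|x\|_{\mathcal{H}}\|y\|_{\mathcal{H}}\geq R^2$ when both points lie outside) is also correct. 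Two small remarks: the Hilbert structure is genuinely used here, since in a general Banach space the radial retraction is only $2$-Lipschitz, so your route gives the sharp constant; and your reading of the hypothesis is the intended one — in the paper the lemma is applied to the truncation $\tilde f_n$ under Assumption \ref{assu: locally Lipschitz condition}, which provides a single constant $L_R$ valid on the whole ball, exactly as you assume (a merely pointwise-local Lipschitz condition on a noncompact ball would not suffice, so this reading is necessary as well as consistent with the paper's usage).
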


\begin{proof}[Proof of Theorem \ref{theo: existence and uniqueness of locally mild solution}]
Set
$\tilde{f}(x)=-f(t,\omega,x_1,x_2)-m(\|B^{\frac{1}{2}}x_1\|^2)Bx_1$.
Since $\tilde{f}$ is locally Lipschitz continuous,  for every
$n\in\mathbb{N}$ we may define the following mapping
  \begin{align*}
     \tilde{f}_n(x)=\left\{\begin{array}{cc} \tilde{f}(x) & \text{if }\|x\|_{\mathcal{H}}\leq n  \\ \tilde{f}(\frac{nx}{\|x\|_{\mathcal{H}}}) & \text{if }\|x\|_{\mathcal{H}}>n, \end{array}\right.
  \end{align*}
where $x\in\mathcal{H}$. Then $\tilde{f}_n$ is globally Lipschitz
continuous by Lemma \ref{lem: globally cont.}. Set
$F_n(x)=\left(0,\tilde{f}_n(x)\right)^{\top}$ for every
$x\in\mathcal{H}$. Therefore, by Theorem \ref{theo: exsitence and
uniqueness of mild solution} for every $n\in\mathbb{N}$ there exists
a unique mild solution $(X_n(t))_{t\geq 0}$ to Problem \eqref{SDE1}
with $F$ substituted for $F_n$ which is given by
\begin{align}\label{eq2}
  X_n(t)=e^{t\mathcal{A}}\mathfrak{u}_0+\int_0^te^{(t-s)\mathcal{A}}F_n(s,X_n(s))\,ds
  +\int_0^{t}\int_Ze^{(t-s)\mathcal{A}}G(s,X_n(s),z)\tilde{N}(ds,dz),\ t\geq0.
\end{align}
Define a sequence of stopping times $\{\tau_n\}_{n=1}^{\infty}$ by
\begin{align*}
            \tau_n:=  \inf\{t\geq 0:\ \|X_n(t)\|_{\mathcal{H}}> n\}.
\end{align*}
By the c\`{a}dl\`{a}g property of the solution $X_n$, we know that $\tau_n$ is indeed a stopping time.
First let us note that for every $n<m$, we have $F_n(x)=F_m(x)=F(x)$ for
all $\|x\|_{\mathcal{H}}\leq n$. Since $\|X_n(t)\|_{\mathcal{H}}\leq
n$ for all $t<\tau_n$, so by \eqref{eq2} we obtain that on $[0,\tau_n)$
\begin{align}\label{eq-330}
   X_n(t)&=e^{t\mathcal{A}}\mathfrak{u}_0+\int_0^{t}e^{(t-s)\mathcal{A}}F_n(s,X_n(s))\,ds +\int_0^{t}\int_Ze^{(t-s)\mathcal{A}}G(s,X_n(s-),z)\tilde{N}(ds,dz)\nonumber\\
   &=e^{t\mathcal{A}}\mathfrak{u}_0+\int_0^{t}e^{(t-s)\mathcal{A}}F(s,X_n(s))\,ds +\int_0^{t}\int_Ze^{(t-s)\mathcal{A}}G(s,X_n(s-),z)\tilde{N}(ds,dz)
   .
\end{align}
Set
\begin{align*}
\Phi(X_n):=e^{t\mathcal{A}}\mathfrak{u}_0+\int_0^{t}e^{(t-s)\mathcal{A}}F(s,X_n(s))\,ds +\int_0^{t}\int_Ze^{(t-s)\mathcal{A}}G(s,X_n(s-),z)\tilde{N}(ds,dz).
\end{align*}
Note that
\begin{align*}
    \triangle \Phi(X_n)(\tau_{n})=\int_Z G(\tau_{n},X_n(\tau_{n}-),z)\tilde{N}(\{\tau_n\},dz).
\end{align*}
which means that the value of $\Phi(X_n)$ at $\tau_n$ depends only on the values of $X_n$ on $[0,\tau_n)$. Hence we may extend the solution $X_n$ on $[0,\tau_n)$ to $X_n$ on $[0,\tau_n]$ by setting, see Appendix,
\begin{align}\label{eq-250}
      X_n(\tau_n)=\Phi(X_n)(\tau_n)=e^{\tau_n\mathcal{A}}\mathfrak{u}_0+\int_0^{\tau_n}e^{(\tau_n-s)\mathcal{A}}F(s,X_n(s))\,ds+I_{\tau_n}(G(X_n))(\tau_n)
\end{align}

where
\begin{align*}
    I_{\tau_n}(G(X_n))(t)=\int_0^{t}\int_Z1_{[0,\tau_n]}e^{(t-s)\mathcal{A}}G(s,X_n(s-),z)\tilde{N}(ds,dz),\ t\geq0.
\end{align*}
In such a case, combining \eqref{eq-330} together with \eqref{eq-250}, we deduce that the stopped process $X(\cdot\wedge\tau_n)$ satisfies
\begin{align}\label{eq-332}
   X_n(t\wedge\tau_n)
   =e^{(t\wedge\tau_n)\mathcal{A}}\mathfrak{u}_0+\int_0^{t\wedge\tau_n}e^{(t\wedge\tau_n-s)\mathcal{A}}F(s,X_n(s))\,ds+I_{\tau_n}(G(X_n))(t\wedge\tau_n),\ t\geq0.
\end{align}
In a similar way, we have
\begin{align*}
   X_m(t\wedge\tau_m)
   &=e^{(t\wedge\tau_n)\mathcal{A}}\mathfrak{u}_0+\int_0^{t\wedge\tau_n}e^{(t\wedge\tau_m-s)\mathcal{A}}F(s,X_m(s))\,ds+I_{\tau_n}(G(X_m))(t\wedge\tau_n),\ t\geq0.
\end{align*}

Set $\tau_{n,m}=\tau_n\wedge\tau_m$. It follows that $\|X_n(t)\|\leq n<m$ and $\|X_m(t)\|\leq m$ for $t\in[0,\tau_{n,m})$. So $F_n(s,X_n(s))=F(s,X_n(s))$ and $F_m(s,X_m(s))=F(s,X_m(s))$. Therefore, $X_n$ and $X_m$ both satisfy the same Equation
\begin{align*}
       X(t)&=e^{t\mathcal{A}}\mathfrak{u}_0+\int_0^{t}e^{(t-s)\mathcal{A}}F(s,X(s))\,ds +\int_0^{t}\int_Ze^{(t-s)\mathcal{A}}G(s,X(s-),z)\tilde{N}(ds,dz),\ \text{on }[0,\tau_{n,m}).
\end{align*}
Hence by the uniqueness of mild solution proved in the theorem \ref{theo: exsitence and uniqueness of mild solution}, we have
\begin{align*}
        X_n(t)=X_m(t),\ on\ [0,\tau_{n,m})\ a.s.
\end{align*}
Since \begin{align*}
    \triangle X_n(\tau_{n,m})=\int_Z G(\tau_{n,m},X_n(\tau_{n,m}-),z)\tilde{N}(\{\tau_n\},dz),
\end{align*}
and the Remark \ref{rem-main} tells us that $G(s,X_n,z)$ and $G(s,X_m,z)$ coincide on $[0,\tau_{n,m}]$, we infer $$X_n=X_m\text{ on }[0,\tau_{n,m}].$$
It follows that
$$\tau_n\leq \tau_m\ \text{if}\ n<m. $$ We will show this assertion by contradiction. Let us fix $n<m$. Suppose that $\mathbb{P}(\tau_n>\tau_m)>0$. Set $A=\{\tau_n>\tau_m\}$. By the definition of the stopping time $\tau_n$, we have $\|X_n(t)\|_{\mathcal{H}}\leq n$ for $t\in[0,\tau_n)$ and $\|X_m(\tau_m)\|_{\mathcal{H}}\geq m>n $. Since $X_n$ coincides with $X_m$ on $[0,\tau_{n,m}]$, we find $\|X_n(\tau_m)\|_{\mathcal{H}}=\|X_m(\tau_m)\|_{\mathcal{H}}>n$ on $A$ which would contradict the fact that $\|X_n(t)\|_{\mathcal{H}}\leq n$ for $t\in[0,\tau_n)$. Therefore, we conclude that $\tau_n\leq \tau_m$ a.s. for $n<m$. This means that
$\{\tau_n\}_{n=1}^{\infty}$ is an increasing sequence. So the limit
$\lim_{n\rightarrow\infty}\tau_n$ exists a.s. Let us denote this limit by
$\tau_{\infty}$. Let
$\Omega_0=\{\omega:\lim_{n\rightarrow\infty}\tau_n=\tau_{\infty}\}$.
Note that $\mathbb{P}(\Omega_0)=1$.

 Now define a local process
$(X_t)_{0\leq t<\tau_{\infty}}$ as follows. If
$\omega\notin\Omega_0$, set $X(t,\omega)=0$ for all $0\leq t<\tau_{\infty}$.
If $\omega\in\Omega_0$, then there exists a number $n\in\mathbb{N}$
such that $t\leq \tau_n(\omega)$ and we
set $X(t,\omega)=X_n(t,\omega)$. The process is well defined since
$X_n(t)$ exists uniquely on $\{t\leq \tau_n\}$.
 Indeed, for every $t\in\mathbb{R}_+$ by \eqref{eq-332} we have
\begin{align*}
  X_n(t\wedge\tau_n)=e^{(t\wedge\tau_n)\mathcal{A}}\mathfrak{u}_0+\int_0^{t\wedge\tau_n}e^{(t\wedge\tau_n-s)\mathcal{A}}F(s,X_n(s))\,ds+I_{\tau_n}(G(X_n))(t\wedge\tau_n)
\end{align*}
Since $X(t)=X_n(t)$ for $t\leq\tau_n$, we infer that
\begin{align*}
        X(t\wedge\tau_n)=e^{(t\wedge\tau_n)\mathcal{A}}\mathfrak{u}_0+\int_0^{t\wedge\tau_n}e^{(t\wedge\tau_n-s)\mathcal{A}}F(s,X(s))\,ds+I_{\tau_n}(G(X))(t\wedge\tau_n)
\end{align*}
where we used the fact that for all $t\geq0$,
\begin{align*}
  I_{\tau_n}(G(X_n))(t)&=\int_0^{t}\int_Z1_{[0,\tau_n]}(s)e^{(t-s)\mathcal{A}}G(s,X_n(s-),z)\tilde{N}(ds,dz)\\
                       &=\int_0^{t}\int_Z1_{[0,\tau_n]}(s)e^{(t-s)\mathcal{A}}G(s\wedge\tau_n,X_n(s\wedge\tau_n-),z)\tilde{N}(ds,dz)\\
                       &=\int_0^{t}\int_Z1_{[0,\tau_n]}(s)e^{(t-s)\mathcal{A}}G(s\wedge\tau_n,X(s\wedge\tau_n-),z)\tilde{N}(ds,dz)\\
                       &=I_{\tau_n}(G(X))(t).
\end{align*}
Furthermore, by the definition of the sequence
$\{\tau_n\}_{n=1}^{\infty}$ we obtain
\begin{align}\label{eq-334}
    \lim_{t\nearrow\tau_{\infty}(\omega)}\|X(t,\omega)\|_{\mathcal{H}}=\lim_n \|X(\tau_n(\omega),\omega)\|_{\mathcal{H}}\geq \lim_{n}n=\infty\ \ \text{a.s.}.
\end{align}
To show that the process $X(t)$, $0\leq t<\tau_{\infty}$ is a maximal local mild solution to Problem \eqref{SDE1}. Let us suppose that $\tilde{X}=(\tilde{X}(t))_{0\leq t<\tilde{\tau}}$ is another local mild solution to Problem \eqref{SDE1}
   such that $\tilde{\tau}\geq \tau_{\infty}$ a.s. and $\tilde{X}|_{[0,\tau_{\infty})\times\Omega}\sim
   X$.
It follows from \eqref{eq-334} and the $\mathbb{P}$-equivalence of $X$ and $\tilde{X}$ on $[0,\tau_{\infty})$ that
\begin{align}\label{eq-336}
   \lim _{t\nearrow\tau_{\infty}(\omega)}\|\tilde{X}(t,\omega)\|_{\mathcal{H}}= \lim_{t\nearrow\tau_{\infty}(\omega)}\|X(t,\omega)\|_{\mathcal{H}}=\infty.
\end{align}
In order to get the maximality of $X$, we need to show that $\mathbb{P}(\tilde{\tau}>\tau_{\infty})=0$. To prove this, assume the contrary, namely $\mathbb{P}(\tilde{\tau}>\tau_{\infty})>0$. Since $\tilde{X}$ is a local mild solution, there exists a sequence $\{\tilde{\tau}_n\}$ of increasing stopping times such that $\tilde{X}$ is a mild solution on the interval $[0,\tilde{\tau}_n]$, i.e. the equation \eqref{locally mild solution} is satisfied. Define a new family of stopping times by
\begin{align*}
    & \sigma_{n,k}:=\tilde{\tau}_n\wedge\inf\{t:\|\tilde{X}(t)\|>k\};\\
     &\sigma_k:=\sup\sigma_{n,k}.
\end{align*}
Since $\sigma_{n,k}\leq \tilde{\tau}_n$, $\sigma_k\leq \tilde{\tau}_n$. Also, observe that $\lim_{k}\sigma_k=\tilde{\tau}$. Since $\sigma_k\nearrow\tilde{\tau}$ and $\mathbb{P}(\tilde{\tau}>\tau_{\infty})>0$, there exists a number $k$ such that $\mathbb{P}(\sigma_k>\tau_{\infty})>0$.
Hence, we have $\|\tilde{X}(t,\omega)\|_{\mathcal{H}}\leq k$ for $t\in[\tau_{\infty}(\omega),\sigma_k(\omega))$ contradicting the earlier observation \eqref{eq-336}.

Now we continue to show the uniqueness of the solution. Actually, the uniqueness of the solution has already shown in above construction of solution $X$. Alternatively, we may prove it in another way. Let $X$ and $Y$ be two mild solution to Problem \eqref{SDE1} on the stochastic intervals $[0,\tau]$ and $[0,\sigma]$. respectively. We shall show that $X=Y$, $\mathbb{P}$-a.s. on $[0,\tau\wedge\sigma]$.\\
  For each $n\in\mathbb{N}$, define
\begin{align*}
   \sigma_n=\inf\{t\geq0:\|Y_n(t)\|_{\mathcal{H}}>n \text{ or } \|X(t)\|_{\mathcal{H}}>n\}\wedge\tau\wedge\sigma\wedge n.
\end{align*}
Then $\|Y(t)\|_{\mathcal{H}}\leq n$ and
$\|X(t)\|_{\mathcal{H}}\leq n$ on $[0,\sigma_n)$. Further, we find out that $\lim_{n\rightarrow\infty}\mathbb{P}(\sigma_n<\sigma\wedge\tau)=0$. Hence we only need to verify that $X=Y$ on $[0,\sigma_n]$, $\mathbb{P}$-a.s.
Since  $X(t)$, $t\in[0,\tau]$ and $Y(t)$, $t\in[0,\sigma]$ are both mild solutions to Problem
\eqref{SDE1}, we infer for that
\begin{align*}
    X(t)&=e^{t\mathcal{A}}\mathfrak{u}_0+\int_0^te^{(t-s)\mathcal{A}}F(s,X(s))\,ds\\
    &\hspace{2cm}+\int_0^{t}\int_Ze^{(t-s)\mathcal{A}}G(s,X(s-),z)\tilde{N}(ds,dz)\ \text{on } [0,\sigma_n)\ \mathbb{P}\text{-a.s.}\\
    Y(t)&=e^{t\mathcal{A}}\mathfrak{u}_0+\int_0^te^{(t-s)\mathcal{A}}F(s,Y(s))\,ds\\
    &\hspace{2cm}+\int_0^{t}\int_Ze^{(t-s)\mathcal{A}}G(s,Y(s-),z)\tilde{N}(ds,dz)\  \text{on }[0,\sigma_n)\ \mathbb{P}\text{-a.s.}.
\end{align*}

Therefore, by using the Cauchy-Schwarz and Burkholder-Davis inequalities, see \cite{[Brzezniak_Hau_Zhu]},
that
\begin{align*}
   &\E\left(\sup_{0\leq s<\sigma_n}\Big{\|}X(s)-Y(s)\Big{\|}^2_{\mathcal{H}}\right)\\
   &\hspace{1cm}\leq 2\E\left(\sup_{0\leq s<\sigma_n}\left\|\int_0^{s}e^{(s-r)\mathcal{A}}\Big{(}F(X(r))-F(Y(r))\Big{)}dr
   \right\|^2_{\mathcal{H}}\right)\\
   &\hspace{1cm}+2\E\left(\sup_{0\leq s<\sigma_n}\left\|\int_0^s\int_Z e^{(s-r)\mathcal{A}}\Big{(}G(r,X(r-),z)-G(r,Y(r-),z)\Big{)}\tilde{N}(dr,dz)\right\|^2_{\mathcal{H}}\right)\\
   &\hspace{1cm}\leq2n L_n^2\E\int_0^{\sigma_n}\|X(s)-Y(s)\|^2_{\mathcal{H}}ds+2CL_g\E\int_0^{\sigma_n}\|X(s)-Y(s)\|^2_{\mathcal{H}}ds\\
   &\hspace{1cm}\leq C(n)\E\int_0^{t\wedge\sigma_n}\sup_{0\leq u\leq \sigma_n}\|X(u)-Y(u)\|^2_{\mathcal{H}}ds,
\end{align*}
where $C(n)=2(nL_n+CL_g)$. By applying the Gronwall Lemma, we obtain
for every $t\geq0$,
\begin{align*}
    \E\left(\sup_{0\leq s<\sigma_n}\Big{\|}X(s)-Y(s)\Big{\|}^2_{\mathcal{H}}\right)=0.
\end{align*}
This implies that for every  $X|_{[0,\sigma_n)}$ and
$Y|_{[0,\sigma_n}$ are indistinguishable. By Remark \ref{rem-main}, we infer that $X=Y$ on $[0,\sigma_n]$ $\mathbb{P}$-a.s.
\end{proof}

We are now in a position to prove the main result. To prove this, we
need another certain auxiliary Lemma known as Khas'minskii's test.
\begin{lem}\label{lem: Khasminskii test}(Khas'minskii's test for nonexplosions) Let $\mathfrak{u}(t)$, $0\leq t<\tau_{\infty}$ be a maximal local mild solution to Equation \eqref{SDE1} with an approximating sequence $\{\tau_n\}_{n\in\mathbb{N}}$. Suppose that there exists a function $V:\mathcal{H}\rightarrow\mathbb{R}$ such that
\begin{enumerate}
   \item[1.] $V\geq0$ on $\mathcal{H}$,
   \item[2.] $q_R=\inf_{\|x\|_{\mathcal{H}}\geq R}V(x)\rightarrow +\infty$
   \item[3.] $\E V(\mathfrak{u}(t\wedge\tau_n))\leq\E V(\mathfrak{u}_0)+C\int_0^t\Big{(}1+\E(V(\mathfrak{u}(s\wedge\tau_n)))\Big{)}ds$ for each $n\in\mathbb{N}$,
   \item[4.] $\E V(\mathfrak{u}_0)<\infty$.
\end{enumerate}
Then $\tau_{\infty}=+\infty$ $\mathbb{P}$-a.s. We call $V$ a
Lyapunov function for \eqref{SDE1}.
\end{lem}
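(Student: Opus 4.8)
The plan is to combine the a priori bound in hypothesis~3 with the coercivity in hypothesis~2 through a Chebyshev-type estimate on the stopping times $\tau_n$. Fix $t>0$ and put $\phi_n(t):=\E V(\mathfrak{u}(t\wedge\tau_n))$, which by hypotheses~1 and~4 is a well-defined, nonnegative, finite quantity. Hypothesis~3 then reads
\[
\phi_n(t)\le \big(\E V(\mathfrak{u}_0)+Ct\big)+C\int_0^t\phi_n(s)\,ds .
\]
Since the forcing $s\mapsto \E V(\mathfrak{u}_0)+Cs$ is nondecreasing, Gronwall's lemma gives the $n$-independent bound $\phi_n(t)\le \big(\E V(\mathfrak{u}_0)+Ct\big)e^{Ct}=:\Psi(t)$.

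Next I would use the structure of the approximating sequence coming from the construction in the proof of Theorem~\ref{theo: existence and uniqueness of locally mild solution}. On the event $\{\tau_n\le t\}$ one has $t\wedge\tau_n=\tau_n$, and by the c\`{a}dl\`{a}g regularity of the solution together with the definition $\tau_n=\inf\{s\ge0:\|X_n(s)\|_{\mathcal{H}}>n\}$ (recall \eqref{eq-334}), $\|\mathfrak{u}(\tau_n)\|_{\mathcal{H}}\ge n$. Hence, writing $q_n:=\inf_{\|x\|_{\mathcal{H}}\ge n}V(x)$ and using $V\ge 0$,
\[
\Psi(t)\ge \phi_n(t)\ge \E\big[V(\mathfrak{u}(t\wedge\tau_n))\,1_{\{\tau_n\le t\}}\big]\ge q_n\,\mathbb{P}(\tau_n\le t),
\]
so that $\mathbb{P}(\tau_n\le t)\le \Psi(t)/q_n$.

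Finally I would pass to the limit $n\to\infty$. Because $\{\tau_n\}$ is increasing with $\tau_\infty=\lim_n\tau_n$, we have $\tau_n\le\tau_\infty$, so the event $\{\tau_\infty\le t\}$ is contained in $\{\tau_n\le t\}$ for every $n$, whence $\mathbb{P}(\tau_\infty\le t)\le \Psi(t)/q_n$; letting $n\to\infty$ and invoking hypothesis~2 ($q_n\to+\infty$) gives $\mathbb{P}(\tau_\infty\le t)=0$. Since $t>0$ is arbitrary, letting $t\to\infty$ yields $\mathbb{P}(\tau_\infty<\infty)=0$, i.e.\ $\tau_\infty=+\infty$ $\mathbb{P}$-a.s.

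I expect the argument to be essentially routine; the only points requiring care are the Gronwall step with a time-dependent (but monotone) forcing term, and the precise justification that $\|\mathfrak{u}(\tau_n)\|_{\mathcal{H}}\ge n$ on $\{\tau_n\le t\}$, which relies on the c\`{a}dl\`{a}g property and the explicit construction of the $\tau_n$ in the proof of Theorem~\ref{theo: existence and uniqueness of locally mild solution}.
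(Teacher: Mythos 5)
Your proposal is correct and follows essentially the same route as the paper: a Gronwall estimate derived from hypothesis~3 giving an $n$-independent bound on $\E V(\mathfrak{u}(t\wedge\tau_n))$, followed by a Chebyshev-type bound $\mathbb{P}(\tau_n\le t)\le \Psi(t)/q_n$ using $\|\mathfrak{u}(\tau_n)\|_{\mathcal{H}}\ge n$ on $\{\tau_n\le t\}$, and then letting $n\to\infty$ via hypothesis~2. The only cosmetic differences are that the paper adds $1$ to both sides before Gronwall (obtaining $(1+\E V(\mathfrak{u}_0))e^{Ct}-1$ instead of your $(\E V(\mathfrak{u}_0)+Ct)e^{Ct}$) and phrases the final limit through the decreasing events $\{\tau_n<t\}$.
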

\begin{proof}
   Since by assumption we know
   \begin{align*}
         \E V(\mathfrak{u}(t\wedge\tau_n))\leq\E V(\mathfrak{u}_0)+C\int_0^t\Big{(}1+\E(V(\mathfrak{u}(s\wedge\tau_n)))\Big{)}ds
   \end{align*}
   So
   \begin{align*}
         1+\E V(\mathfrak{u}(t\wedge\tau_n))\leq1+\E V(\mathfrak{u}_0+C\int_0^t\Big{(}1+\E(V(\mathfrak{u}(s\wedge\tau_n)))\Big{)}ds,
   \end{align*}
    then apply the Gronwall Lemma to obtain
    \begin{align*}
      1+\E V(\mathfrak{u}(t\wedge\tau_n))\leq \Big{(}1+\E V(\mathfrak{u}_0)\Big{)}e^{Ct}.
    \end{align*}
    Hence we have for each $n\in\mathbb{N}$
    \begin{align*}
          \E V(\mathfrak{u}(t\wedge\tau_n))\leq \Big{(}1+\E V(\mathfrak{u}_0)\Big{)}e^{Ct}-1,\ \ t\geq0.
    \end{align*}
    It then follows that
    \begin{align*}
         \mathbb{P}(\{\tau_n<t\})&=\E1_{\{\tau_n<t\}}=\int_{\Omega}1_{\{\tau_n<t\}}d\mathbb{P}=\int_{\Omega}\frac{q_n}{q_n}1_{\{\tau_n<t\}}d\mathbb{P}\leq\frac{1}{q_n}\int_{\Omega}V(\mathfrak{u}(t\wedge\tau_n))1_{\{\tau_n<t\}}d\mathbb{P}\\
                                 &\leq\frac{1}{q_n}\int_{\Omega}V(\mathfrak{u}(t\wedge\tau_n))d\mathbb{P}=\frac{1}{q_n}\E V(\mathfrak{u}(t\wedge\tau_n))\leq\frac{1}{q_n}\left[\Big{(}1+\E V(\mathfrak{u}_0)\Big{)}e^{Ct}-1\right].
    \end{align*}
    Since $\E V(\mathfrak{u}_0)<\infty$ and $q_n\rightarrow\infty$ as $n\rightarrow\infty$, so $\lim_{n\rightarrow\infty}\mathbb{P}(\{\tau_n<t\})=0$. Since $\tau_n$ is an increasing stopping time, the set $\{\tau_n<t\}$ is decreasing. Thus we infer that for every $t\geq0$,
    \begin{align*}
     \mathbb{P}(\{\tau_{\infty}<t\})=\mathbb{P}(\{\lim_{n\rightarrow\infty}\tau_n<t\})
     =\mathbb{P}\left(\bigcap_{n\in\mathbb{N}}\{\tau_n<t\}\right)=\lim_{n\rightarrow\infty}\mathbb{P}(\{\tau_n<t\})=0
    \end{align*}
    Hence $\tau_{\infty}=+\infty$, $\mathbb{P}$-a.s..
\end{proof}
\begin{proof}[Proof of Theorem \ref{theo: lifespan}]
Let $\mathfrak{u}(t)$, $0\leq t<\tau_{\infty}$, be a maximal local
mild solution to problem \eqref{SDE1}. Define a sequence of stopping
times by
\begin{align*}
     \tau_n=\inf\{t\geq0:\|\mathfrak{u}(t)\|_{\mathcal{H}}\geq n\},\ \ n\in\mathbb{N}.
\end{align*}
Then in the proof of Theorem \ref{theo: existence and uniqueness of locally mild solution}, we showed that
$\{\tau_n\}_{n\in\mathbb{N}}$ is an approximating sequence of the
accessible stopping time $\tau_{\infty}$. In order to apply the 
Khas'minskii's test, we need to find a Lyapunov function.
Define a function $V:\mathcal{H}\rightarrow\mathbb{R}^+$ by
\begin{align*}
     V(x)=\frac{1}{2}\|x\|_{\mathcal{H}}^2+\frac{1}{2}M(\|B^{\frac{1}{2}}x_1\|_H^2),
\end{align*}
where $x=(x_1,x_2)^{\top}\in\mathcal{H}$ and $M(s)=\int_0^sm(r)dr$,
$s\geq0$. It is clear that for every $x\in\mathcal{H}$,
\begin{align*}
    V(x)\geq0.
\end{align*}
Observe that
\begin{align*}
    q_R&=\inf_{\|x\|_{\mathcal{H}}\geq R}V(x)=\frac{1}{2}\inf_{\|x\|_{\mathcal{H}}\geq R}\|x\|_{\mathcal{H}}^2+\frac{1}{2}\inf_{\|x\|_{\mathcal{H}}\geq R}M(\|B^{\frac{1}{2}}x_1\|^2)\\
    &=\frac{1}{2}R^2+\frac{1}{2}\inf_{\|x\|_{\mathcal{H}}\geq R}M(\|B^{\frac{1}{2}}x_1\|^2)\\
    &=\frac{1}{2}R^2+\frac{1}{2}\inf_{\|x\|_{\mathcal{H}}\geq R}\int_0^{\|B^{\frac{1}{2}}x_1\|^2}m(r)dr.
\end{align*}
Taking the limit in this equality as $R\rightarrow\infty$, we obtain
that $q_R\rightarrow+\infty$.
Meanwhile, we have
\begin{align*}
      \E(V(\mathfrak{u}_0))=\frac{1}{2}\E\|\mathfrak{u}_0\|^2_{\mathcal{H}}+\frac{1}{2}\E M(\|B^{\frac{1}{2}}\mathfrak{u}_0\|_H)<\infty.
\end{align*}
Thus conditions 1,2,4 in the definition of Lyapunov function are
satisfied. It remains to verify condition 3 from Lemma \ref{lem:
Khasminskii test}, namely,
\begin{align}\label{eq350}
\E V(\mathfrak{u}(t\wedge\tau_n))\leq\E
V(\mathfrak{u}_0)+C\int_0^t\Big{(}1+\E(V(\mathfrak{u}(s\wedge\tau_n)))\Big{)}ds,\
t\geq0.
\end{align}
The idea is to prove \eqref{eq350} first for global strong solution and then extend to the case when $\mathfrak{u}$ is a local mild solution.

\textbf{\textit{Step 1: Inequality \eqref{eq350} holds for global
strong solutions.}} Suppose that $\mathfrak{u}$ is a global strong
solution to Problem \eqref{SDE1} satisfying
\begin{align*}
    \mathfrak{u}(t)=\mathfrak{u}_0&+\int_0^t\Big{[}\mathcal{A}\mathfrak{u}(s)+F(s,u(s),u_t(s))\Big{]}ds\\
    &+\int_0^{t}\int_ZG(s,\mathfrak{u}(s-),z)\tilde{N}(ds,dz),\ \mathbb{P}\text{-a.s. }
    t\geq0.
\end{align*}
Applying the It\^{o} formula, see \cite{[Zhu_2010]}, to the process
$\mathfrak{u}(\cdot\wedge\tau_n)$ and function
$V(x)=\frac{1}{2}\|x\|^2_{\mathcal{H}}+\frac{1}{2}M(\|B^{\frac{1}{2}}x_1\|_H^2)$,
we obtain for $t\geq0$,
\begin{align}\label{eq4}
       V(\mathfrak{u}(t\wedge\tau_n))-V(\mathfrak{u}_0)=&\int_0^{t\wedge\tau_n}\langle DV(\mathfrak{u}(s),\mathcal{A}\mathfrak{u}(s)+F(s,\mathfrak{u}(s)))\rangle_{\mathcal{H}}ds\nonumber\\
       &+\int_0^{t\wedge\tau_n}\int_Z\Big{[}V(\mathfrak{u}(s)+G(s,\mathfrak{u}(s),z))-V(\mathfrak{u}(s))\\
       &\hspace{2cm}-\langle DV(\mathfrak{u}(s)),G(s,\mathfrak{u}(s),z)\rangle\Big{]}\nu(dz)\,ds\nonumber\\
       &+\int_0^{t\wedge\tau_n}\int_Z\Big{[}V\big{(}\mathfrak{u}(s-)+G(s,\mathfrak{u}(s-),z)\big{)}-V(\mathfrak{u}(s-))\Big{]}\tilde{N}(ds,dz).\nonumber
\end{align}
Note that for any $x=(x_1,x_2)^{\top}$ and $h=(h_1,h_2)^{\top}$,
\begin{align*}
       DV(x)h&=\langle x,h\rangle_{\mathcal{H}}+m(\|B^{\frac{1}{2}x_1}\|^2)\langle B^{\frac{1}{2}}x_1,B^{\frac{1}{2}}h_1\rangle\\
       &=\langle x,h\rangle_{\mathcal{H}}+m(\|B^{\frac{1}{2}}x_1\|^2_H)\langle Bx_1,h_1\rangle\\
       &=\langle x,h\rangle_{\mathcal{H}}+m(\|B^{\frac{1}{2}}x_1\|^2_H)\langle AA^{-2}Bx_1,Ah_1\rangle\\
       &=\langle x,h\rangle_{\mathcal{H}}+m(\|B^{\frac{1}{2}}x_1\|^2_H)\langle \left( \begin{array}{c} A^{-2}Bx_1 \\ 0 \end{array}\right),\left(\begin{array}{c} h_1 \\ h_2 \end{array}\right) \rangle_{\mathcal{H}}.
\end{align*}
Hence for any $x=(x_1,x_2)^{\top}\in\mathcal{H}$,
\begin{align*}
        DV(x)=x+m(\|B^{\frac{1}{2}}x_1\|_H^2)\left(\begin{array}{c} A^{-2}Bx_1 \\ 0 \end{array}\right).
\end{align*}
It follows that for $x\in\mathcal{D}(\mathcal{A})$,
\begin{align*}
   &\langle DV(x),\mathcal{A}x\rangle_{\mathcal{H}}=\langle x,\mathcal{A}x\rangle_{\mathcal{H}}+m(\|B^{\frac{1}{2}}x_1\|^2_H)\langle \left(\begin{array}{c} A^{-2}Bx_1 \\ 0 \end{array}\right),\mathcal{A}x\rangle_{\mathcal{H}}\\
   &=\langle \left(\begin{array}{c} x_1 \\ x_2 \end{array}\right),\left(\begin{array}{c} x_2 \\ -A^2x_1 \end{array}\right)\rangle_{\mathcal{H}}+m(\|B^{\frac{1}{2}}x_1\|^2_H)\langle\left(\begin{array}{c} A^{-2}Bx_1 \\ 0 \end{array}\right),\left(\begin{array}{c} x_2 \\ -A^{-2}x_1 \end{array}\right)\rangle_{\mathcal{H}}\\
   &=\langle Ax_1,Ax_2\rangle_H+\langle x_2,-A^{-2}x_1\rangle_H+m(\|B^{\frac{1}{2}}x_1\|^2_H)\langle AA^{-2}Bx_1,Ax_2\rangle+0\\
   &=m(\|B^{\frac{1}{2}}x_1\|^2_H)\langle Bx_1,x_2\rangle_H.
\end{align*}
Moreover,
\begin{align*}
    & \langle DV(x),F(x)\rangle_{\mathcal{H}}=\langle x,F(x)\rangle_{\mathcal{H}}+m(\|B^{\frac{1}{2}}x_1\|^2_H)\langle \left(\begin{array}{c} A^{-2}Bx_1 \\ 0 \end{array}\right),F(x)\rangle_{\mathcal{H}}\\
     &=\langle \left(\begin{array}{c} x_1 \\ x_2\end{array}\right),\left(\begin{array}{c} 0 \\ -m(\|B^{\frac{1}{2}}x_1\|^2_H)Bx_1-f(x_1,x_2) \end{array}\right)\rangle_{\mathcal{H}}\\
     &\hspace{1cm}+m(\|B^{\frac{1}{2}}x_1\|^2_H)\langle \left(\begin{array}{c} A^{-2}Bx_1 \\ 0 \end{array}\right),\left(\begin{array}{c} 0 \\ -m(\|B^{\frac{1}{2}}x_1\|^2_H)Bx_1-f(x_1,x_2) \end{array}\right)\rangle_{\mathcal{H}}\\
     &=\langle x_2,-m(\|B^{\frac{1}{2}}x_1\|^2_H)Bx_1-f(x_1,x_2)\rangle_H+0\\
     &=-m(\|B^{\frac{1}{2}}x_1\|^2_H)\langle x_2,Bx_1\rangle_H-\langle
     x_2,f(x_1,x_2)\rangle_H,\ x\in\mathcal{H}.
\end{align*}
Combining the above equalities, we infer that
\begin{align*}
    \langle DV(x),\mathcal{A}x+F(x)\rangle_{\mathcal{H}}=-\langle x_2,f(x_1,x_2)\rangle_H\ \ \text{for all }x\in\mathcal{D}(\mathcal{A}).
\end{align*}
On the other hand, we find
\begin{align*}
    & \langle DV(x),G(x,z)\rangle_{\mathcal{H}}=\langle x,G(x,z)\rangle_{\mathcal{H}}+m(\|B^{\frac{1}{2}}x_1\|^2_H)\langle\left(\begin{array}{c} A^{-2}Bx_1 \\ 0 \end{array}\right),G(x,z)\rangle_{\mathcal{H}}\\
     &=\langle \left(\begin{array}{c} x_1 \\ x_2 \end{array}\right),\left(\begin{array}{c} 0 \\ g(x_1,x_2,z) \end{array}\right)\rangle_{\mathcal{H}}
     +m(\|B^{\frac{1}{2}}x_1\|^2_H)\langle\left(\begin{array}{c} A^{-2}Bx_1 \\ 0 \end{array}\right),\left(\begin{array}{c} 0 \\ g(x_1,x_2,z) \end{array}\right)\rangle_{\mathcal{H}}\\
     &=\langle x_2,g(x_1,x_2,z)\rangle_H.
\end{align*}
and
\begin{align*}
   & V(x+G(x,z))-V(x)\\
    &=\frac{1}{2}\|x+G(x,z)\|_{\mathcal{H}}^2+\frac{1}{2}M(\|B^{\frac{1}{2}}x_1\|^2_H)-\frac{1}{2}\|x\|^2_{\mathcal{H}}-\frac{1}{2}M(\|B^{\frac{1}{2}}x_1\|^2_H)\\
    &=\frac{1}{2}\|x\|_{\mathcal{H}}^2+\langle x,G(x,z)\rangle_{\mathcal{H}}+\frac{1}{2}\|G(x,z)\|_{\mathcal{H}}^2-\frac{1}{2}\|x\|_{\mathcal{H}}^2\\
    &=\langle x_2,g(x_1,x_2,z)\rangle_H+\frac{1}{2}\|g(x_1,x_2,z)\|^2_H.
\end{align*}
From these relations we obtain
\begin{align*}
       &V(\mathfrak{u}(t\wedge\tau_n))-V(\mathfrak{u}_0)=\int_0^{t\wedge\tau_n}\langle DV(\mathfrak{u}(s),\mathcal{A}\mathfrak{u}(s)+F(s,\mathfrak{u}(s)))\rangle_{\mathcal{H}}ds\\
       &\hspace{1cm}+\int_0^{t\wedge\tau_n}\int_Z\Big{[}V(\mathfrak{u}(s)+G(s,\mathfrak{u}(s),z))-V(\mathfrak{u}(s))\\
       &\hspace{3cm}-\langle DV(\mathfrak{u}(s)),G(s,\mathfrak{u}(s),z)\rangle\Big{]}\nu(dz)\,ds\\
       &\hspace{1cm}+\int_0^{t\wedge\tau_n}\int_Z\Big{[}V\big{(}\mathfrak{u}(s-)+G(s,\mathfrak{u}(s-),z)\big{)}-V(\mathfrak{u}(s-))\Big{]}\tilde{N}(ds,dz)\\
       &=-\int_0^{t\wedge\tau_n}\langle u_t(s),f(s,u(s),u_t(s))\rangle_Hds\\
       &\hspace{1cm}+\int_0^{t\wedge\tau_n}\int_Z\Big{[}\langle u_t(s),g(s,u(s),u_t(s),z)\rangle_{\mathcal{H}}+\frac{1}{2}\|g(s,u(s),u_t(s),z)\|^2_H\\
       &\hspace{2cm}-\langle u_t(s),g(u(s),u_t(s),z)\rangle_H\Big{]}\nu(dz)\,ds\\
       &\hspace{1cm}
       +\int_0^{t\wedge\tau_n}\int_Z\Big{[}\langle \mathfrak{u}_t(s-),g(s,\mathfrak{u}(s-),z)\rangle_{\mathcal{H}}+\frac{1}{2}\|g(s,\mathfrak{u}(s-),z)\|_{\mathcal{H}}^2\Big{]}\tilde{N}(ds,dz)\\
       &=-\int_0^{t\wedge\tau_n}\langle u_t(s),f(s,u(s),u_t(s))\rangle_Hds+\frac{1}{2}\int_0^{t\wedge\tau_n}\int_Z\|g(s,\mathfrak{u}(s),z)\|_{\mathcal{H}}^2\nu(dz)\,ds\\
       &\hspace{1cm}+\int_0^{t\wedge\tau_n}\int_Z\Big{[}\langle \mathfrak{u}_t(s-),g(s,\mathfrak{u}(s-),z)\rangle_{\mathcal{H}}+\frac{1}{2}\|g(s,\mathfrak{u}(s-),z)\|_{\mathcal{H}}^2\Big{]}\tilde{N}(ds,dz).
\end{align*}
Taking expectation to both sides of the above equalities we infer
that
\begin{align*}
       \E V(\mathfrak{u}(t\wedge\tau_n))&=\E V(\mathfrak{u}_0)-\E \int_0^{t\wedge\tau_n}\langle u_t(s),f(s,u(s),u_t(s))\rangle_Hds\\
       &\hspace{1cm}+\frac{1}{2}\E\int_0^{t\wedge\tau_n}\int_Z\|g(s,\mathfrak{u}(s),z)\|_{\mathcal{H}}^2\nu(dz)\,ds\\
       &=\E V(\mathfrak{u}_0)-\E \int_0^t\langle u_t(s),f(s,u(s),u_t(s))\rangle_H1_{(0,{t\wedge\tau_n}]}(s)\,ds\\
       &\hspace{1cm}+\frac{1}{2}\E\int_0^{t}\int_Z\|g(s,\mathfrak{u}(s),z)\|_{\mathcal{H}}^21_{(0,{t\wedge\tau_n}]}(s)\nu(dz)\,ds
       \end{align*}
       \begin{align*}
       &\leq \E V(\mathfrak{u}_0)+\frac{1}{2}(1+K_f)\E\int_0^{t}(1+\|\mathfrak{u}(s\wedge\tau_n)\|_{\mathcal{H}}^2)\,ds\\
       &\hspace{1cm}+\frac{1}{2}K_g\E\int_0^{t\wedge\tau_n}(1+\|\mathfrak{u}(s\wedge\tau_n)\|_{\mathcal{H}}^2)\,ds\\
       &=\E
       V(\mathfrak{u}_0)+\frac{1}{2}(1+K_f+K_g)\int_0^{t}(1+\E\|\mathfrak{u}(s\wedge\tau_n)\|_{\mathcal{H}}^2)\,ds,
       \ t\geq0.
\end{align*}
Above we used the growth conditions \eqref{growth condition of
f}-\eqref{growth condition of g} of functions $f$ and $g$.
Therefore, inequality \eqref{eq350} holds if we set $C=\frac{1}{2}(1+K_f+K_g)$.\\
\textbf{\textit{Step 2: Inequality \eqref{eq350} holds for a local mild solution.}}\\
In this case, one of the main obstacles is that the solution
$\mathfrak{u}$ to Problem \eqref{SDE1} under Assumptions \ref{assu:
growth condition} and \ref{assu: locally Lipschitz condition} is a
local mild solution, so the lifespan of solution $\tau_{\infty}$ may
be finite. For this, we fix $n\in\mathbb{N}$ and introduce the
following functions
\begin{align*}
      \tilde{f}(t)&=1_{[0,\tau_n)}(t)f(t,\mathfrak{u}(t\wedge\tau_n)),\ t\geq0,\\
      \tilde{g}(t,z)&=1_{[0,\tau_n]}(t)g(t,\mathfrak{u}(t\wedge\tau_n-),z),\ t\geq0 \text{ and }z\in Z.
\end{align*}
Here $\mathfrak{u}(t)$, $0\leq t<\tau_{\infty}$, with
$\tau_{\infty}=\lim_{n\rightarrow\infty}\tau_n$, is the unique local
mild solution of Problem \eqref{SDE1} under Assumptions \ref{assu:
growth condition} and \ref{assu: locally Lipschitz condition}.
Denote
\begin{align*}
   \tilde{F}(t)=\left(\begin{array}{c} 0 \\ -\tilde{f}(t)-m(\|B^{\frac{1}{2}}u(t\wedge\tau_n)\|^2_H)Bu(t\wedge\tau_n)1_{[0,\tau_n)}(t) \end{array}\right)\ \ \text{and}\ \  \tilde{G}(t,z)=\left(\begin{array}{c} 0 \\ \tilde{g}(t,z) \end{array}\right).
\end{align*}
One can see that the process $\tilde{F}$ and $\tilde{G}$ are bounded. So
  Consider
the following linear non-homogeneous stochastic equation
\begin{align}
\begin{split}\label{SDE3}   dv(t)&=\mathcal{A}v(t)dt+\tilde{F}(t)dt+\int_Z\tilde{G}(t,z)\tilde{N}(dt,dz),\ t\geq0,\\
  v(0)&=\mathfrak{u}(0). 
\end{split}
\end{align}
By Theorem \ref{theo: exsitence and uniqueness of mild solution},
there exists a unique global mild solution of this equation which is given by
\begin{align}\label{eq7}
       v(t)=e^{t\mathcal{A}}\mathfrak{u}(0)+\int_0^te^{(t-s)\mathcal{A}}\tilde{F}(s)\,ds+\int_0^{t}\int_Ze^{(t-s)\mathcal{A}}\tilde{G}(s,z)\tilde{N}(ds,dz),\ t\geq0.
\end{align}
Hence the stopped process $v(\cdot\wedge\tau_n)$ satisfies
\begin{align*}
   v(t\wedge\tau_n)
    =e^{(t\wedge\tau_n)\mathcal{A}}\mathfrak{u}(0)
    +\int_0^{t\wedge\tau_n}e^{(t\wedge\tau_n-s)\mathcal{A}}\tilde{F}(s)\,ds
    +I_{\tau_n}(\tilde{G})(t\wedge\tau_n),\ t\geq0,
\end{align*}
where as usual
\begin{align*}
I_{\tau_n}(\tilde{G})(t)=\int_0^{t}\int_Z1_{[0,\tau_n]}(s)e^{(t-s)\mathcal{A}}\tilde{G}(s,z)\tilde{N}(ds,dz).
\end{align*}
One can observe that
\begin{align*}
    I_{\tau_n}(\tilde{G})(t)&=\int_0^{t}\int_Z1_{[0,\tau_n]}(s)e^{(t-s)\mathcal{A}}\tilde{G}(s,z)\tilde{N}(ds,dz)\\
               &=\int_0^{t}\int_Z1_{[0,\tau_n]}(s)e^{(t-s)\mathcal{A}}G(s,\mathfrak{u}(s\wedge\tau_n-),z)\tilde{N}(ds,dz)\\
               &=\int_0^{t}\int_Z1_{[0,\tau_n]}(s)e^{(t-s)\mathcal{A}}G(s,\mathfrak{u}(s-),z)\tilde{N}(ds,dz)\\
               &=I_{\tau_n}(G(\mathfrak{u}))(t),\ t\geq0.
\end{align*}
 Therefore, on the basis of Lemma \ref{stochasitc convolution}, we find out that for
each $n\in\mathbb{N}$
\begin{align*}
    v(t\wedge\tau_n)
    &=e^{(t\wedge\tau_n)\mathcal{A}}\mathfrak{u}(0)
    +\int_0^{t\wedge\tau_n}e^{(t\wedge\tau_n-s)\mathcal{A}}\tilde{F}(s)\,ds
    +I_{\tau_n}(\tilde{G})(t\wedge\tau_n)\\
    &=e^{(t\wedge\tau_n)\mathcal{A}}\mathfrak{u}(0)
    +\int_0^{t\wedge\tau_n}1_{(0,\tau_n]}e^{(t\wedge\tau_n-s)\mathcal{A}}\tilde{F}(s)\,ds
    +I_{\tau_n}(G(\mathfrak{u}))(t\wedge\tau_n)\\
                    &=e^{(t\wedge\tau_n)\mathcal{A}}\mathfrak{u}(0)
                    +\int_0^{t\wedge\tau_n}e^{(t\wedge\tau_n-s)\mathcal{A}}1_{[0,\tau_n]}(s)F(s,\mathfrak{u}(s\wedge\tau_n))\,ds
                    +I_{\tau_n}(G(\mathfrak{u}))(t\wedge\tau_n)\\
                    &=\mathfrak{u}(t\wedge\tau_n)\ \ \
                    \mathbb{P}\text{-a.s.}\
                    t\geq0.
\end{align*}
The second difficulty here is that the It\^{o} formula is only
applicable to strong solution. So our next step is to find a
sequence of global strong solutions which converges to the global
mild solution $v$ uniformly. To do this, let us set, with
$R(m;\mathcal{A})=(mI-\mathcal{A})^{-1}$,
\begin{align*}
\mathfrak{u}_m(0)&=mR(m;\mathcal{A})\mathfrak{u}(0);\\\
\tilde{F}_m(t,\omega)&=mR(m;\mathcal{A})\tilde{F}(t,\omega)\ \
\text{for}\ (t,\omega)\in\mathbb{R}_+\times\Omega;\\
\tilde{G}_m(t,\omega,z)&=mR(m;\mathcal{A})\tilde{G}(t,\omega,z)\ \
\text{ for } (t,\omega,z)\in\mathbb{R}_+\times\Omega\times Z.
\end{align*}

 Since $\mathcal{A}$ is the infinitesimal
generator of a contraction $C_0$-semigroup
$(e^{t\mathcal{A}})_{t\geq0}$, by the Hille-Yosida Theorem,
$\|R(m;\mathcal{A})\|\leq\frac{1}{m}$,
$\tilde{F}_m(t,\omega)\in\mathcal{D}(\mathcal{A})$, for every
$(t,\omega)\in\mathbb{R}_+\times\Omega$ and
$\tilde{G}_m(t,\omega,z)\in\mathcal{D}(\mathcal{A})$ for every
$(t,\omega,z)\in\mathbb{R}_+\times\Omega\times Z$. Moreover,
$\tilde{F}_m(t,\omega)\rightarrow \tilde{F}(t,\omega)$ pointwise
on $\mathbb{R}_+\times\Omega$ and
$\tilde{G}_m(t,\omega,z)\rightarrow\tilde{G}(t,\omega,z)$
pointwise on $\mathbb{R}_+\times\Omega\times Z$. Next, we note that $\|\tilde{F}_m\|_{\mathcal{H}}$ and
$\|\tilde{F}_m-\tilde{F}\|_{\mathcal{H}}$ is bounded from above by a function
$2\|F\|_{\mathcal{H}}$ belonging to $\mathcal{M}^2_{loc}(\mathcal{P};\mathbb{R})$,
so the Lebesgue Dominated Convergence Theorem tells us that for every $T>0$,
\begin{align}
 \lim_{m\rightarrow\infty}\E\int_0^T\|\tilde{F}_m(t)-\tilde{F}(t)\|_{\mathcal{H}}^2\,dt=0\label{eq41}
\end{align}
Analogously, we know that $\|\tilde{G}_m \|_{\mathcal{H}} $ and  $\|\tilde{G}_m-\tilde{G}\|_{\mathcal{H}}$ are bounded by
functions $\|G\|_{\mathcal{H}}$ and $2\|G\|_{\mathcal{H}}$, respectively, which are both belonging to $\mathcal{M}^2_{loc}(\hat{\mathcal{P}};\mathbb{R})$.
So again we can apply the Lebesgue Dominated Convergence Theorem to
find out that for all $T>0$,
\begin{align}
   \lim_{m\rightarrow\infty}\E\int_0^T\int_Z|\tilde{G}_m(t,z)-\tilde{G}(t,z)|^2_{\mathcal{H}}\nu(dz)dt=0.\label{eq42}
\end{align}
Clearly, by the definition,  $\tilde{F}_m(t,\omega)\in\mathcal{D}(\mathcal{A})$, for all $(t,\omega)\in\mathbb{R}_+\times\Omega $ and $\tilde{G}_m(t,\omega,z)\in\mathcal{D}(\mathcal{A})$, for all $(t,\omega,z)\in\mathbb{R}_+\times\Omega \times Z$. Hence by the boundedness discussed before, we infer
$\tilde{F}_m\in\mathcal{M}^2_{loc}(\mathcal{B}\mathcal{F};\mathcal{D}(\mathcal{A}))$
and
$\tilde{G}_m\in\mathcal{M}^2_{loc}(\hat{\mathcal{P}};\mathcal{D}(\mathcal{A}))$, $m\in\mathbb{N}$.\\
 From Theorem \ref{theo:
exsitence and uniqueness of mild solution}, it follows that the
following Equation
\begin{align*}
     dv_m(t)&=\mathcal{A}v_m(t)dt+\tilde{F}_m(t)dt+\int_Z\tilde{G}_m(t,z)\tilde{N}(dt,dz),\ t\geq0\\
      v_m(0)&=\mathfrak{u}_m(0).
\end{align*}
has a unique global strong solution which satisfies that $\mathbb{P}$-a.s. for all $t\geq0$,
\begin{align}\label{eq5}
     v_m(t)=e^{t\mathcal{A}_m}\mathfrak{u}_m(0)+\int_0^te^{(t-s)\mathcal{A}_m}\tilde{F}_m(s)\,ds
     +\int_0^{t}\int_Ze^{(t-s)\mathcal{A}}\tilde{G}_m(s,z)\tilde{N}(ds,dz),
\end{align}
Note that we can rewrite this global strong solution in the following
form
\begin{align}\label{eq6}
    v_m(t)=\mathfrak{u}_m(0)+\int_0^t\Big{[}\mathcal{A}v_m(s)+\tilde{F}_m(s)\Big{]}ds+\int_0^{t}\int_Z\tilde{G}_m(s,z)\tilde{N}(ds,dz),\
    t\geq0.
\end{align}
Let $\sigma$ be a stopping time. Now we can apply It\^{o} formula, see \cite{[Zhu_2010]}, to the process $v_m$ of the form \eqref{eq6}
and the function $V$ to get
\begin{align}\label{eq40}
\begin{split}
       &V(v_m(\sigma))-V(\mathfrak{u}_m(0))\\
       &=\int_0^{\sigma}\langle DV(v_m(s),\mathcal{A}v_m(s)+\tilde{F}_m(s))\rangle_{\mathcal{H}}ds\\
       &+\int_0^{\sigma}\int_Z\Big{[}V(v_m(s)+\tilde{G}_m(s,z))-V(v_m(s))-\langle DV(v_m(s)),\tilde{G}_m(s,z)\rangle\Big{]}\nu(dz)\,ds\\
       &+\int_0^{\sigma}\int_Z\Big{[}V\big{(}v_m(s-)+\tilde{G}_m(s,z)\big{)}-V(v_m(s-))\Big{]}\tilde{N}(ds,dz).
\end{split}
\end{align}
We next observe that for every $T>0$,
\begin{align}\label{eq8}
    \lim_{m\rightarrow\infty}\E\sup_{0\leq t\leq T}\|v_m(t)-v(t)\|^2_{\mathcal{H}}=0.
\end{align}
Indeed, from \eqref{eq7} and \eqref{eq5} we find out that
\begin{align*}
   v_m(t)-v(t)&=\int_0^te^{(t-s)\mathcal{A}}\left(\tilde{F}(s)-\tilde{F}_m(s)\right)\,ds\\
   &\hspace{1cm}+\int_0^{t}\int_Ze^{(t-s)\mathcal{A}}\left(\tilde{G}(s,z)-\tilde{G}_m(s,z)\right)\tilde{N}(ds,dz),\
   t\geq0.
\end{align*}
Using the Cauchy-Swartz inequality, we obtain
\begin{align*}
     \E\sup_{0\leq t\leq T}\left\|\int_0^te^{(t-s)\mathcal{A}}\left(\tilde{F}(s)-\tilde{F}_m(s)\right)\,ds\right\|^2_{\mathcal{H}}
     &\leq
     T\E\sup_{0\leq t\leq
     T}\int_0^t\left\|e^{(t-s)\mathcal{A}}\left(\tilde{F}(s)-\tilde{F}_m(s)\right)\right\|^2_{\mathcal{H}}ds\\
     &\leq
     T\E\int_0^T\left\|\tilde{F}(s)-\tilde{F}_m(s)\right\|^2_{\mathcal{H}}ds.
\end{align*}
The right side of above inequality converges to 0, as
$m\rightarrow\infty$, as we have already shown before in
\eqref{eq41}. Therefore, we obtain
\begin{align*}
      \lim_{m\rightarrow\infty}\E\sup_{0\leq t\leq T}\left\|\int_0^te^{(t-s)\mathcal{A}}\left(\tilde{F}(s)-\tilde{F}_m(s)\right)\,ds\right\|^2_{\mathcal{H}}=0.
\end{align*}
Meanwhile, we can use the Davis inequality for stochastic convolution
processes, see \cite{[Brzezniak_Hau_Zhu]}, to deduce that
\begin{align}\label{eq43}
    \E\sup_{0\leq t\leq T}\left\|\int_0^t\int_Ze^{(t-s)\mathcal{A}}\left(\tilde{G}(s,z)-\tilde{G}_m(s,z)\right)\tilde{N}(ds,dz)\right\|^2_{\mathcal{H}}\nonumber\\
      \leq
      C\E\int_0^T\int_Z\|\tilde{G}(t,z)-\tilde{G}_m(t,z)\|^2_{\mathcal{H}}\nu(dz)dt.
\end{align}
Note that the right side of \eqref{eq43} converges to $0$ as
$m\rightarrow\infty$ by \eqref{eq42}. Hence we have
\begin{align*}
   \lim_{m\rightarrow\infty}\E
   \sup_{0\leq t\leq T} \left\|\int_0^t\int_Ze^{(t-s)\mathcal{A}}\left(\tilde{G}(s,z)-\tilde{G}_m(s,z)\right)\tilde{N}(ds,dz)\right\|^2_{\mathcal{H}}=0,
\end{align*}
which proves equality \eqref{eq8}.\\
Therefore, we conclude that $v_m(t)$
  converges to $v(t)$  uniformly on any closed interval $[0,T]$, $0<T<\infty$, $\mathbb{P}$-a.s.
  Hence, by taking a subsequence if necessary we may assume that  $v_m(t)\rightarrow v(t)$,
 uniformly and
$\tilde{F}_{m}(s)\rightarrow\tilde{F}(s)$ and
$\tilde{G}_{m}(s,z)\rightarrow\tilde{G}(s,z)$  on
$[0,\sigma(\omega)]$, as $m\rightarrow\infty$, for almost all $\omega$ in $\Omega$.

We introduce the following canonical linear projection mappings
\begin{align*}
      &\pi_1:\mathcal{H}\ni\left(\begin{array}{c} x \\ y \end{array}\right)\mapsto x\in\mathcal{D}(A)\\
      &\pi_2:\mathcal{H}\ni\left(\begin{array}{c} x \\ y \end{array}\right)\mapsto y\in\mathcal{H}.
\end{align*}

Calculations similar to those performed in Step 1 yield
\begin{align*}
   &\langle DV(v_m(s),\mathcal{A}v_m(s)+\tilde{F}_m(s))\rangle_{\mathcal{H}}\\
   &=\langle v_m(s),\mathcal{A}v_m(s)+\tilde{F}_m(s))\rangle_{\mathcal{H}}\\
   &\hspace{1cm}+m(\|B^{\frac{1}{2}}\pi_1v_m(s)\|^2_H)\langle \left(\begin{array}{c} A^{-2}B\pi_1v_m(s) \\ 0 \end{array}\right),\mathcal{A}v_m(s)+\tilde{F}_m(s))\rangle_{\mathcal{H}}\\
   &=\langle v_m(s),\mathcal{A}v_m\rangle_{\mathcal{H}}+\langle v_m(s),\tilde{F}_m(s)\rangle_{\mathcal{H}}\\
   &\hspace{1cm}+m(\|B^{\frac{1}{2}}\pi_1v_m(s)\|^2_H)\langle
   B\pi_1v_m(s),\pi_1\mathcal{A}v_m(s)+
   \pi_1\tilde{F}_m(s)\rangle_H\\
   &\leq\langle v_m(s),\tilde{F}_m(s)\rangle_{\mathcal{H}}+m(\|B^{\frac{1}{2}}\pi_1v_m(s)\|^2_H)\langle
   B\pi_1v_m(s),\pi_2v_m(s)+
   \pi_1\tilde{F}_m(s)\rangle_H,\ s\geq0,
\end{align*}
where we used the facts that $\pi_1\mathcal{A}v_m(t)=\pi_2v_m(t)$ for
$t\in [0,T]$ and $$\langle v(s),\mathcal{A}v(s)\rangle_{\mathcal{H}}\leq0, \text{ for all }s,$$
since the operator $\mathcal{A}$ is dissipative.

Moreover, since for all appropriate $(s,z)$, $$\pi_1\tilde{G}_m(s,z)=m(m^2I+A^2)^{-1}\tilde{g}(s,z)$$ and
$$\pi_2\tilde{G}_m(s,z)=m^2(m^2I+A^2)^{-1}\tilde{g}(s,z),$$

we infer that
\begin{align}\label{eq-360}
\begin{split}
    & \langle DV(v_m(s)),\tilde{G}_m(s,z)\rangle_{\mathcal{H}}
     \\
     &=\langle v_m(s),\tilde{G}_m(s,z)\rangle_{\mathcal{H}}+m(\|B^{\frac{1}{2}}\pi_1v_m(s)\|^2_H)\langle\left(
     \begin{array}{c} A^{-2}B\pi_1v_m(s) \\ 0 \end{array}\right),\tilde{G}_m(s,z)\rangle_{\mathcal{H}}
     \\
     &=\langle v_m(s),\tilde{G}_m(s,z)\rangle_{\mathcal{H}}
     +m(\|B^{\frac{1}{2}}\pi_1v_m(s)\|^2_H)\langle\left(\begin{array}{c} A^{-2}B\pi_1v_m(s) \\ 0 \end{array}\right),\left(\begin{array}{c} \pi_1\tilde{G}_m(s,z) \\
      \pi_1\tilde{G}_m(s,z) \end{array}\right)\rangle_{\mathcal{H}}
     \\
     &=\langle v_m(s),\tilde{G}_m(s,z)\rangle_{\mathcal{H}}+m(\|B^{\frac{1}{2}}\pi_1v_m(s)\|^2_H)\langle
     A^{-2}B\pi_1v_m(s),\pi_1\tilde{G}_m(s,z)\rangle_H,\ s\geq0.
     \end{split}
    \end{align}

 Furthermore, we have
\begin{align}
\begin{split}\label{eq-361}
    V(v_m(s)&+\tilde{G}_m(s,z))-V(v_m(s))\\
    &=\frac{1}{2}\|v_m(s)+\tilde{G}_m(s,z)\|_{\mathcal{H}}^2+\frac{1}{2}M(\|B^{\frac{1}{2}}\pi_1(v_m(s)+\tilde{G}_m(s,z))\|^2_H)\\
    &\hspace{1cm}-\frac{1}{2}\|v_m(s)\|^2_{\mathcal{H}}-\frac{1}{2}M(\|B^{\frac{1}{2}}\pi_1v_m(s)\|^2_H)\\
    &=\langle v_m(s),\tilde{G}_m(s,z)\rangle_{\mathcal{H}}+\frac{1}{2}\|\tilde{G}_m(s,z)\|^2_{\mathcal{H}}+\frac{1}{2}M(\|B^{\frac{1}{2}}\pi_1(v_m(s)+\tilde{G}_m(s,z))\|^2_H)\\
    &\hspace{1cm}-\frac{1}{2}M(\|B^{\frac{1}{2}}\pi_1v_m(s)\|^2_H).
    \end{split}
\end{align}
Hence equality \eqref{eq40} becomes
\begin{align}\label{eq48}
       &V(v_m(\sigma))-V(\mathfrak{u}_m(0))\\
       &\leq\int_0^{\sigma}\Big{[}\langle v_m(s),\tilde{F}_m(s)\rangle_{\mathcal{H}}+m(\|B^{\frac{1}{2}}\pi_1v_m(s)\|^2_H)\langle
   B\pi_1v_m(s),\pi_2v_m(s)+
      \pi_1\tilde{F}_m(s)\rangle_H\Big{]}ds\nonumber\\
       &\hspace{1cm}+\int_0^{\sigma}\int_Z\Big{[}V(v_m(s)+\tilde{G}_m(s,z))-V(v_m(s))-\langle DV(v_m(s)),\tilde{G}_m(s,z)\rangle\Big{]}\nu(dz)\,ds\nonumber\\
       &\hspace{1cm}+\int_0^{\sigma}\int_Z\Big{[}V\big{(}v_m(s-)+\tilde{G}_m(s,z)\big{)}-V(v_m(s-))\Big{]}\tilde{N}(ds,dz).
\end{align}

 Note that $\pi_1\tilde{F}(s,\omega)=0$ on $\mathbb{R}_+\times\Omega$ and
$\pi_1\tilde{G}(s,\omega,z)=0$ on $\mathbb{R}_+\times\Omega\times Z$.
Since the functions $m(\cdot)$ and $M(\cdot)$ are continuous and the operator $B\in\mathcal{L}(\mathcal{D}(\mathcal{A}),H)$, we have $\mathbb{P}$-a.s.
 \begin{align*}
        \pi_1v_m(s)&\rightarrow\pi_1v(s),\\
        m(\|B^{\frac{1}{2}}\pi_1 v_m(s)\|^2_H)&\rightarrow m(\|B^{\frac{1}{2}}\pi_1 v(s)\|^2_H),\\
        \pi_2v_m(s)&\rightarrow\pi_2v(s),\\
        B\pi_1v_m(s)&\rightarrow B\pi_1v_m(s),\ \ \  \end{align*}
  uniformly on $[0,\sigma(\omega)]$, as $m\rightarrow\infty$ and
\begin{align*}
        \pi_1\tilde{G}_m(s,z)&\rightarrow 0,\\
        M(\|B^{\frac{1}{2}}\pi_1(v_m(s)+\tilde{G}_m(s,z))\|^2_H)&\rightarrow
        M(\|B^{\frac{1}{2}}\pi_1v(s)\|^2_H)
\end{align*}
on $[0,\sigma(\omega)]$ for all most all $\omega\in\Omega$, as
$m\rightarrow\infty$.  We also notice that for every
$m\in\mathbb{N}$, the set $\{v_m(t,\omega): t\in [0, T]\}$ is relatively 
compact for almost all $\omega$ and the sequence
$\{v_m\}_{m\in\mathbb{N}}$ converges uniformly to $v$,
$\mathbb{P}$-a.s.. Hence the set $\{v_m(s),s\in[0,T], m\in\mathbb{N}\}$ is
bounded in $\mathcal{H}$, $\mathbb{P}$-a.s. It follows that
\begin{align*}
     \langle
v_m(s),\tilde{F}_m(s)\rangle_{\mathcal{H}}\leq
\|v_m(s)\|_{\mathcal{H}}\|\tilde{F}_m(s)\|_{\mathcal{H}}\leq
\|\tilde{F}_m(s)\|_{\mathcal{H}}\sup_{0\leq s\leq
T}\|v_m(s)\|_{\mathcal{H}}\leq C\|\tilde{F}(s)\|_{\mathcal{H}}.
\end{align*}

 Therefore, on the basis of the Lebesgue
Dominated convergence theorem, we conclude that
\begin{align*}
    \int_0^{\sigma}\langle v_m(s),\tilde{F}_m(s)\rangle_{\mathcal{H}}ds\rightarrow\int_0^{\sigma}\langle
v_m(s),\tilde{F}_m(s)\rangle_{\mathcal{H}}ds\ \
\mathbb{P}\text{-a.s.}
\end{align*}
Analogously, by the continuity of the function $m$ and the fact that
$B\in\mathcal{L}(\mathcal{D}(A),H)$, we infer for some constants
$C_1$, $C_2$,
\begin{align*}
     m(\|B^{\frac{1}{2}}\pi_1v_m(s)\|^2_H)\langle
   B\pi_1v_m(s),\pi_2v_m(s)+
   \pi_1\tilde{F}_m(s)\rangle_H\leq C_1+C_2\|\tilde{F}(s)\|_{\mathcal{H}}.
\end{align*}
Moreover, we know that for almost all $\omega\in\Omega$
\begin{align*}
m(\|B^{\frac{1}{2}}\pi_1v_m(s)\|^2_H)\langle
   B\pi_1v_m(s),\pi_2v_m(s)+
   \pi_1\tilde{F}_m(s)\rangle_H
\end{align*}
converges on $[0,\sigma(\omega)]$  as
  $m\rightarrow\infty$ to
$$m(\|B^{\frac{1}{2}}\pi_1v(s)\|^2_H)\langle
   B\pi_1v(s),\pi_2v(s)
   \rangle_H.$$
Again, it follows from the Lebesgue Dominated convergence theorem that
$\mathbb{P}$-a.s.
\begin{align*}
    \int_0^{\sigma}
  \Big{[}m(\|B^{\frac{1}{2}}\pi_1v_m(s)\|^2_H)\langle
   B\pi_1v_m(s),\pi_2v_m(s)+
   \pi_1\tilde{F}_m(s)\rangle_H\rangle_{\mathcal{H}}\Big{]}ds
\end{align*}
converges as $m\rightarrow\infty$ to
\begin{align*}
\int_0^{\sigma}m(\|B^{\frac{1}{2}}\pi_1v(s)\|^2_H)\langle
   B\pi_1v(s),\pi_2v(s)\,ds.
\end{align*}
In conclusion, $\mathbb{P}$-a.s. the first term on the right
side of inequality \eqref{eq48} converges as $m\rightarrow\infty$ to
$$\int_0^{\sigma}\Big{[}\langle v(s),\mathcal{A}v\rangle_{\mathcal{H}}+\langle v(s),\tilde{F}(s)\rangle_{\mathcal{H}}+m(\|B^{\frac{1}{2}}\pi_1v(s)\|^2_H)\langle
   B\pi_1v(s),\pi_2v(s)
   \rangle_H\Big{]}ds.$$
    \\
  Also,  we know from \eqref{eq-360} and \eqref{eq-361} that as $m\rightarrow\infty$ for all $t\in[0,T]$, $z\in Z$,
$\mathbb{P}$-a.s.
\begin{align*}
 &V(v_m(s)+\tilde{G}_m(s,z))-V(v_m(s))-\langle
DV(v_m(s)),\tilde{G}_m(s,z)\rangle_{\mathcal{H}}\rightarrow\frac{1}{2}\|\tilde{G}(s,z)\|^2_H\\
&V\big{(}v_m(s)+\tilde{G}_m(s,z)\big{)}-V(v_m(s))\rightarrow\langle
v(s),\tilde{G}(s,z)\rangle_{\mathcal{H}}+\frac{1}{2}\|\tilde{G}(s,z)\|^2_{\mathcal{H}}.
\end{align*}

Set $X(\omega)=\{v_m(t,\omega): t\in [0, T],m\in\mathbb{N}\}$, for
$\omega\in\Omega$. As we have noticed before, $X(\omega)$ is a
bounded subset of $\mathcal{H}$ for almost all $\omega\in\Omega$.
Since the functions $DV$ and $D^2V$ are uniformly continuous on
bounded subsets of $\mathcal{H}$, so $\sup_{x\in X} |DV(x)|<\infty$
and $\sup_{x\in X}|D^2V(x)|<\infty$, $\mathbb{P}$-a.s. Hence by the Taylor
formula, one have
\begin{align*}
   V(v_m(s)+\tilde{G}_m(s,z))&-V(v_m(s))-\langle
DV(v_m(s)),\tilde{G}_m(s,z)\rangle_{\mathcal{H}}\\
&\leq
\frac{1}{2}\|D^2V(v_m(s))\|\|\tilde{G}_m(s,z)\|^2_{\mathcal{H}}\\
&\leq \frac{1}{2}\sup_{x\in
X}\|D^2V(x)\|\|\tilde{G}(s,z)\|^2_{\mathcal{H}}.
\end{align*}
We also observe that since
$\tilde{G}\in\mathcal{M}^2_{loc}(\hat{\mathcal{P}};\mathcal{H})$,
for every $0<T<\infty$, we have
$$\int_0^T\int_Z\|\tilde{G}(s,z)\|^2\nu(dz)\,ds<\infty,
\ \mathbb{P}\text{-a.s.}$$ By using above result, along with the Lebesgue Dominated
Convergence Theorem, we obtain that
\begin{align*}
\int_0^{\sigma}\int_Z V(v_m(s)+\tilde{G}_m(s,z))&-V(v_m(s))-\langle
DV(v_m(s)),\tilde{G}_m(s,z)\rangle_{\mathcal{H}}\nu(dz)\,ds
\end{align*}
converges to
\begin{align*}
\int_0^{\sigma}\int_Z\frac{1}{2}\tilde{G}(s,z)\nu(dz)\,ds,\
\mathbb{P}\text{-a.s.  as }m\rightarrow\infty.
\end{align*}
On the other hand, by the It\^{o} isometry property of the stochastic integral, see \cite{[Zhu_2010]}, we have
   \begin{align*}
      &\E\Big{\|}\int_0^{\sigma}\int_Z\Big{[}V\big{(}v_m(s-)+\tilde{G}_m(s,z)\big{)}-V(v_m(s-))\Big{]}\tilde{N}(ds,dz)\\
      &\hspace{2cm}-\int_0^{\sigma}\int_Z\Big{[}\langle
v(s-),\tilde{G}(s,z)\rangle_{\mathcal{H}}+\frac{1}{2}\|\tilde{G}(s,z)\|^2_{\mathcal{H}}\Big{]}\tilde{N}(ds,dz)\Big{\|}_{\mathcal{H}}^2\\
      &=\E\int_0^{\sigma}\int_Z\left|V\big{(}v_m(s)+\tilde{G}_m(s,z)\big{)}-V(v_m(s))-\langle
v(s),\tilde{G}(s,z)\rangle_{\mathcal{H}}+\frac{1}{2}\|\tilde{G}(s,z)\|^2_{\mathcal{H}}\right|^2\nu(dz)\,ds.
   \end{align*}
   Moreover, we note that the integrand $$\Big{|}V\big{(}v_m(s)+\tilde{G}_m(s,z)\big{)}-V(v_m(s))-\langle
v(s),\tilde{G}(s,z)\rangle_{\mathcal{H}}+\frac{1}{2}\|\tilde{G}(s,z)\|^2_{\mathcal{H}}\Big{|}^2$$
is bounded by $2\sup_{x\in
X}\|DV(x)\|^2\|\tilde{G}(s,z)\|^2_{\mathcal{H}}\leq
C\|\tilde{G}(s,z)\|^2_{\mathcal{H}}$. Since $G\in\mathcal{M}^2(\mathcal{P};\mathcal{H})$, for every
$0<T<\infty$,
$\E\int_0^T\int_Z\|\tilde{G}(s,z)\|^2_{\mathcal{H}}\nu(dz)\,ds<\infty$.
So
   again, we can apply the Lebesgue Dominated Converges Theorem to get
\begin{align*}
\lim_{m\rightarrow\infty}&\E\Big{\|}\int_0^{\sigma}\int_Z\Big{[}V\big{(}v_m(s-)+\tilde{G}_m(s,z)\big{)}-V(v_m(s-))\Big{]}\tilde{N}(ds,dz)\\
      &\hspace{2cm}-\int_0^{\sigma}\int_Z\Big{[}\langle
v(s-),\tilde{G}(s,z)\rangle_{\mathcal{H}}+\frac{1}{2}\|\tilde{G}(s,z)\|^2_{\mathcal{H}}\Big{]}\tilde{N}(ds,dz)\Big{\|}_{\mathcal{H}}^2=0.
\end{align*}
Hence by taking a subsequence, we infer that
$$\int_0^{\sigma}\int_Z\Big{[}V\big{(}v_m(s-)+\tilde{G}_m(s,z)\big{)}-V(v_m(s-))\Big{]}\tilde{N}(ds,dz)$$
converges $\mathbb{P}$-a.s. to $$\int_0^{\sigma}\int_Z\Big{[}\langle
v(s-),\tilde{G}(s,z)\rangle_{\mathcal{H}}+\frac{1}{2}\|\tilde{G}(s,z)\|^2_{\mathcal{H}}\Big{]}\tilde{N}(ds,dz).$$
Also, it is not hard to see that
\begin{align}
    \lim_{n\rightarrow\infty} V(v_m(\sigma))&=\frac{1}{2}\lim_{n\rightarrow\infty}\|v_m(\sigma)\|^2_H+\frac{1}{2}\lim_{n\rightarrow\infty}M(\|B^{\frac{1}{2}}\pi_1v_m(\sigma)\|^2_H)\nonumber\\
                  &=\frac{1}{2}\|v(\sigma)\|_H^2+\frac{1}{2}M(\|B^{\frac{1}{2}}\pi_1v(\sigma)\|^2_H)\nonumber\\
                  &= V(v(\sigma)).
\end{align}
From above observation, by letting $m\rightarrow\infty$ in
inequality \eqref{eq48}, one easily deduces that
\begin{align}
\begin{split}
      &V(v(\sigma))-V(\mathfrak{u}_0)\\
       &=\int_0^{\sigma}\Big{[}\langle v(s),\mathcal{A}v\rangle_{\mathcal{H}}+\langle v(s),\tilde{F}(s)\rangle_{\mathcal{H}}+m(\|B^{\frac{1}{2}}\pi_1v(s)\|^2_H)\langle
   B\pi_1v(s),\pi_1\mathcal{A}v(s)
   \rangle_H\Big{]}ds\\
       &\hspace{1cm}+\int_0^{\sigma}\int_Z\frac{1}{2}\tilde{G}(s,z)\nu(dz)\,ds\\
       &\hspace{1cm}+\int_0^{\sigma}\int_Z\Big{[}\langle
v(s-),\tilde{G}(s,z)\rangle_{\mathcal{H}}+\frac{1}{2}\|\tilde{G}(s,z)\|^2_{\mathcal{H}}\Big{]}\tilde{N}(ds,dz)\\
&\leq \int_0^{\sigma}\Big{[}\langle
v(s),\tilde{F}(s)\rangle_{\mathcal{H}}+m(\|B^{\frac{1}{2}}\pi_1v(s)\|^2_H)\langle
   B\pi_1v(s),\pi_1\mathcal{A}v(s)
   \rangle_H\Big{]}ds\\
       &\hspace{1cm}+\int_0^{\sigma}\int_Z\frac{1}{2}\tilde{G}(s,z)\nu(dz)\,ds\\
       &\hspace{1cm}+\int_0^{\sigma}\int_Z\Big{[}\langle
v(s-),\tilde{G}(s,z)\rangle_{\mathcal{H}}+\frac{1}{2}\|\tilde{G}(s,z)\|^2_{\mathcal{H}}\Big{]}\tilde{N}(ds,dz),\
\mathbb{P}\text{-a.s.}
\end{split}
\end{align}

Therefore, $\mathbb{P}$-a.s.
  \begin{align*}
     V(v(\sigma))-V(\mathfrak{u}_0)
              &\leq\int_0^{\sigma}\Big{[}\langle \pi_2v(s),\tilde{f}(s)\rangle_{\mathcal{H}}+m(\|B^{\frac{1}{2}}\pi_1v(s)\|^2_H)\langle B\pi_1v(s),\pi_1\mathcal{A}v(s)\rangle\Big{]}ds\\
       &\hspace{1cm}+\frac{1}{2}\int_0^{\sigma}\int_Z\|\tilde{g}(s,z)\|^2_H\nu(dz)\,ds\\
       &\hspace{1cm}+\int_0^{\sigma}\int_Z\Big{[}\langle \pi_2v(s-),\tilde{g}(s,z)\rangle_{H}+\frac{1}{2}\|\tilde{g}(s,z)\|^2_H\Big{]}\tilde{N}(ds,dz).
  \end{align*}
  Taking expectation to both sides, we have
    \begin{align*}
              \E V(v(\sigma))\leq \E V(\mathfrak{u}_0)&+\E\int_0^{\sigma}\Big{[}\langle \pi_2v(s),\tilde{f}(s)\rangle_{\mathcal{H}}+m(\|B^{\frac{1}{2}}\pi_1v(s)\|^2_H)\langle B\pi_1v(s),\pi_1\mathcal{A}v(s)\rangle\Big{]}ds\\
       &+\frac{1}{2}\E\int_0^{\sigma}\int_Z\|\tilde{g}(s,z)\|^2_H\nu(dz)\,ds.
  \end{align*}
  Now let us recall that $v(t\wedge\tau_n)=\mathfrak{u}(t\wedge\tau_n)$, $ \tilde{F}(t)=1_{(0,\tau_n]}(t)F(t,\mathfrak{u}(t\wedge\tau_n))$
     and $\tilde{G}(t)=1_{(0,\tau_n]}(t)G(t,\mathfrak{u}(t\wedge\tau_n-),z)$ for $t\geq0$. Thus by setting $\sigma=t\wedge\tau_n$ and using the results achieved in step 1, we infer that
  \begin{align*}
      & \E V(\mathfrak{u}(t\wedge\tau_n))\\
       &\leq \E V(\mathfrak{u}_0)+\E\int_0^{t\wedge\tau_n}\Big{[}\langle \pi_2\mathfrak{u}(s),\pi_2\tilde{F}(s)\rangle_{\mathcal{H}}+m(\|B^{\frac{1}{2}}\pi_1\mathfrak{u}(s)\|^2_H)\langle B\pi_1\mathfrak{u}(s),\pi_1\mathcal{A}\mathfrak{u}(s)\rangle\Big{]}ds\\
       &\hspace{1.5cm}+\frac{1}{2}\E\int_0^{\sigma}\int_Z\|\tilde{g}(s,z)\|^2_H\nu(dz)\,ds\\
       &=\E V(\mathfrak{u}_0)+\E\int_0^{t\wedge\tau_n}\Big{[}-m(\|B^{\frac{1}{2}}u(s\wedge\tau_n)\|^2_H)\langle u_t(s),Bu(s\wedge\tau_n)\rangle1_{(0,\tau_n]}(s)\\
       &\hspace{3cm}-\langle u_t(s),f(\mathfrak{u}(s\wedge\tau_n)\rangle_{\mathcal{H}}1_{(0,\tau_n]}(s)+m(\|B^{\frac{1}{2}}u(s)\|^2_H)\langle Bu(s),u_t(s)\rangle\Big{]}ds\\
       &\hspace{1.5cm}+\frac{1}{2}\E\int_0^{t\wedge\tau_n}\int_Z\|g(s,\mathfrak{u}(s\wedge\tau_n-),z)\|^2_H1_{(0,\tau_n]}(t)\nu(dz)\,ds\\
       &=\E V(\mathfrak{u}_0)-\E\int_0^{t\wedge\tau_n}\langle u_t(s),f(\mathfrak{u}(s\wedge\tau_n)\rangle_{\mathcal{H}}ds+\frac{1}{2}\E\int_0^{t\wedge\tau_n}\int_Z\|g(s,\mathfrak{u}(s-),z)\|^2_H\nu(dz)\,ds           \end{align*}
           \begin{align*}
       &=\E V(\mathfrak{u}_0)-\E \int_0^t\langle u_t(s),f(\mathfrak{u}(s))\rangle_H1_{(0,{t\wedge\tau_n}]}(s)\,ds\\
       &\hspace{3cm}+\frac{1}{2}\E\int_0^{t}\int_Z\|g(s,\mathfrak{u}(s-),z)\|_{\mathcal{H}}^21_{(0,{t\wedge\tau_n}]}(s)\nu(dz)\,ds\\
       &\leq \E V(\mathfrak{u}_0)+\frac{1}{2}(1+K_f)\E\int_0^{t}(1+\|\mathfrak{u}(s\wedge\tau_n)\|_{\mathcal{H}}^2)\,ds+\frac{1}{2}K_g\E\int_0^{t\wedge\tau_n}(1+\|\mathfrak{u}(s\wedge\tau_n)\|_{\mathcal{H}}^2)\,ds\\
       &=\E V(\mathfrak{u}_0)+\frac{1}{2}(1+K_f+K_g)\int_0^{t}(1+\E\|\mathfrak{u}(s\wedge\tau_n)\|_{\mathcal{H}}^2)\,ds.
  \end{align*}
  This finally proves inequality \eqref{eq350}.
  In conclusion, we proved that $V$ is indeed a Lyapunov function and hence we can apply Lemma \ref{lem: Khasminskii test} to deduce that $\tau_{\infty}=\infty$.
\end{proof}

\begin{proof}[Proof of Theorem \ref{theo: stability}]
Define a new Lyapunov function in terms of operator $P$ by
\begin{align*}
        \Phi(x)=\frac{1}{2}\langle
        Px,x\rangle_{\mathcal{H}}+M(\|B^{\frac{1}{2}}x_1\|^2_{H}),\
        x\in\mathcal{H}.
\end{align*}
Since $m\in \mathcal{C}^1$ and $P\in\mathcal{L}(\mathcal{H})$, we infer that
$\Phi\in\mathcal{C}^2(\mathcal{H})$. Under Assumptions \eqref{assu:
growth condition} and \eqref{assu: locally Lipschitz condition},
Theorems \ref{theo: existence and uniqueness of locally mild
solution} and \ref{theo: lifespan} imply that Equation \eqref{SDE1}
has a unique global mild solution $\mathfrak{u}(t)$, $t\geq 0$ given
by \eqref{locally mild solution}, i.e.
     \begin{align*}
     \mathfrak{u}(t\wedge\tau_n)=e^{t\mathcal{A}}\mathfrak{u}_0+\int_0^{t\wedge\tau_n}e^{(t\wedge\tau_n-s)\mathcal{A}}F(s,\mathfrak{u}(s))\,ds+I_{\tau_n}(G(\mathfrak{u}))(t\wedge\tau_n) \ \text{a.s.,
     }t\geq 0.
  \end{align*}
where the process $I_{\tau_n}(G(\mathfrak{u}))$ is defiend by \eqref{eqn-I_tau_n}, i.e.
\begin{align*}I_{\tau_n}(G(\mathfrak{u}))(t)=\int_0^{t}\int_Z1_{(0,\tau_n]}e^{(t-s)\mathcal{A}}G(s,\mathfrak{u}(s-),z)\tilde{N}(ds,dz),\
t\geq0
\end{align*}
 and $\{\tau_n\}_{n\in\mathbb{N}}$ is an accessible sequence and
$\lim_{n\rightarrow\infty}\tau_n=\tau_{\infty}=\infty$. We have
already seen in the proof of Theorem \ref{theo: lifespan} that the
idea of getting an estimate for our Lyapunov function with a mild
solution is to approximate the mild solution by a sequence of strong
solutions to which we can apply the It\^{o} formula. We shall examine
the new Lyapunov function $\Phi$ in the same way as we did for $V$. Let $n$
be fixed. We first define functions $\tilde{F}$ and $\tilde{G}$ by the following formulae
\begin{align*}
      \tilde{F}(t)&=1_{(0,\tau_n]}(t)F(\mathfrak{u}(t\wedge\tau_n))\\
      &=\left(\begin{array}{c} 0 \\ -\tilde{f}(t) -m(\|B^{\frac{1}{2}}u(t\wedge\tau_n)\|^2_H)Bu(t\wedge\tau_n)1_{(0,\tau_n]}(t) \end{array}\right),\ t\in[0,T],\\
      \tilde{G}(t)&=1_{(0,\tau_n]}(t)G(t,\mathfrak{u}(t\wedge\tau_n-),z)\\
      &=\left(\begin{array}{c} 0 \\ \tilde{g}(t,z)
      \end{array}\right),\ t\in[0,T].
\end{align*}
Here $\tilde{f}(t)=-1_{(0,\tau_n]}(t)\beta u(t\wedge\tau_n)$,
$t\geq0$ and
$\tilde{g}(t,z)=1_{(0,\tau_n]}(t)g(t,\mathfrak{u}(t\wedge\tau_n-),z)$,
$t\geq0$. Then the following Equation
\begin{align}
\begin{split}
  dv(t)&=\mathcal{A}v(t)dt+\tilde{F}(t)dt+\int_Z\tilde{G}(t,z)\tilde{N}(dt,dz),\; t\geq 0\\
  v(0)&=\mathfrak{u}(0)
\end{split}
\end{align}
has a unique global mild solution which satisfies
\begin{align}
       v(t)=e^{t\mathcal{A}}\mathfrak{u}(0)+\int_0^te^{(t-s)\mathcal{A}}\tilde{F}(s)\,ds+\int_0^{t}\int_Ze^{(t-s)\mathcal{A}}\tilde{G}(s,z)\tilde{N}(ds,dz),\ \mathbb{P}\text{-a.s.}, t\geq0.
\end{align}
Since $\mathfrak{u}$ is the local mild solution, so $\mathfrak{u}$
satisfies \eqref{locally mild solution},  a similar argument used in
the proof of Theorem \ref{theo: lifespan} yields that for each
$n\in\mathbb{N}$
\begin{align*}
     v(t\wedge\tau_n)=\mathfrak{u}(t\wedge\tau_n)\ \ \mathbb{P}\text{-a.s. }t\geq0.
\end{align*}
Set
\begin{align*}
\mathfrak{u}_m(0)&=mR(m;\mathcal{A})\mathfrak{u}(0)\\
\tilde{F}_m(t,\omega)&=mR(m;\mathcal{A})\tilde{F}(t,\omega)\
\text{for}\  (t,\omega)\in\mathbb{R}_+\times\Omega;\\
\tilde{G}_m(t,\omega,z)&=mR(m;\mathcal{A})\tilde{G}(t,\omega,z)\text{
for }(t,\omega,z)\in\mathbb{R}_+\times\Omega\times Z. \end{align*}

In exactly the same manner as in the proof of Theorem \ref{theo:
lifespan} we infer that $\mathbb{P}$-a.s.
\begin{align}
     &\lim_{m     \rightarrow  \infty}
     \int_0^T\|\tilde{F}_m(t)-\tilde{F}(t)\|_{\mathcal{H}}^2\,dt=0, \label{eq44}\\
   &\lim_{m
     \rightarrow
     \infty}\int_0^T\int_Z|\tilde{G}_m(t,z)-\tilde{G}(t,z)|^2_{\mathcal{H}}\nu(dz)dt=0.\label{eq45}
\end{align}
Also, we find out that
$\tilde{G}_m\in\mathcal{M}^2([0,T]\times\Omega\times
Z,\hat{\mathcal{P}},\lambda\otimes\mathbb{P}\times\nu;\mathcal{D}(\mathcal{A}))$.
 By using the Theorem \ref{theo: exsitence and uniqueness of mild
solution}, one can see that the equation
\begin{align*}
     dv_m(t)&=\mathcal{A}v_m(t)dt+\tilde{F}_m(t)dt+\int_Z\tilde{G}_m(t,z)\tilde{N}(dt,dz)\\
      v_m(0)&=\mathfrak{u}(0)
\end{align*}
has a unique strong solution $v_m$  given by
\begin{equation}
    v_m(t)=\mathfrak{u}_m(0)+\int_0^t\Big{[}\mathcal{A}v_m(s)+\tilde{F}_m(s)\Big{]}ds+\int_0^{t}\int_Z\tilde{G}_m(ss,z)\tilde{N}(ds,dz)\
    \mathbb{P}\text{-a.s.}, t\geq0.
\end{equation}
Equivalently, we can also write the solution in the mild form
\begin{align}
     v_m(t)=e^{t\mathcal{A}}\mathfrak{u}(0)+\int_0^te^{(t-s)\mathcal{A}}\tilde{F}_m(s)\,ds+\int_0^{t}\int_Ze^{(t-s)\mathcal{A}}\tilde{G}_m(s,z)\tilde{N}(ds,dz),\
     \mathbb{P}\text{-a.s.}, t\geq0.
\end{align}
Now applying the It\^{o} Formula, see \cite{[Zhu_2010]}, to function $\Phi(x)e^{\lambda t}$
and the strong solution $v_m$ yields
\begin{align}\label{eq15}
\begin{split}
      &\Phi(v_m(t))e^{\lambda t}\\
      &=\Phi(v_m(s))e^{\lambda s}+\int_s^te^{\lambda r}\Big{[}\lambda \Phi(v_m(r))+\langle D\Phi(v_m(r)),\mathcal{A}v_m(r)+\tilde{F}_m(r)\rangle_{\mathcal{H}}\Big{]}dr\\
      &+\int_s^t\int_Ze^{\lambda r}\Big{[}\Phi(v_m(r)+\tilde{G}_m(r,z))-\Phi(v_m(r))-\langle D\Phi(v_m(s),\tilde{G}_m(r,z))\rangle_{\mathcal{H}}\Big{]}\nu(dz)dr\\
      &+\int_s^{t}\int_Ze^{\lambda t}\Big{[}\Phi(v_m(r-)+\tilde{G}_m(r,z))-\Phi(v_m(r-))\Big{]}\tilde{N}(dr,dz).
      \end{split}
\end{align}
We first find the following fact
\begin{align*}
     D\Phi(x)h&=\langle Ph,x\rangle_{\mathcal{H}}+2m(\|B^{\frac{1}{2}}x_1\|^2_H)\langle B^{\frac{1}{2}}x_1,B^{\frac{1}{2}}h_1\rangle\\
     &=\langle Ph,x\rangle_{\mathcal{H}}+2m(\|B^{\frac{1}{2}}x_1\|^2_H)\langle\left(\begin{array}{c} A^{-2}Bx_1 \\ 0 \end{array}\right),\left(\begin{array}{c} h_1 \\ h_2 \end{array}\right)\rangle_{\mathcal{H}},
\end{align*}
where $x=(x_1,x_2)^{\top}$, $h=(h_1,h_2)^{\top}$ and
$k=(k_1,k_2)^{\top}$ are all in $\mathcal{H}$. One can also rewrite
the derivative $D\Phi$ as follows
\begin{align*}
   D\Phi(x)&=Px+2m(\|B^{\frac{1}{2}}x_1\|^2_H)\left(\begin{array}{c} A^{-2}Bx_1 \\ 0 \end{array}\right)\ x
   \in\mathcal{H}.
\end{align*}
We adopt the projections $\pi_1$ and $\pi_2$ which are defined in
the proof of Theorem \ref{theo: lifespan}.

Therefore, by using above derivative formula we get for $r\in[0,T]$, 
\begin{align}\label{eq18}
      &\langle D\Phi(v_m(r)),\mathcal{A}v_m(r)+\tilde{F}_m(r)\rangle_{\mathcal{H}}\\
      &=\langle D\Phi(v_m(r)),\mathcal{A}v_m(r)\rangle_{\mathcal{H}}+\langle D\Phi(v_m(r)),\tilde{F}_m(r)\rangle_{\mathcal{H}}\nonumber\\
      &=\langle Pv_m(r)+2m(\|B^{\frac{1}{2}}\pi_1 v_m(r)\|^2_H)\left(\begin{array}{c} A^{-2}B\pi_1 v_m(r) \\ 0 \end{array}\right),\mathcal{A}v_m(r)\rangle_{\mathcal{H}}\nonumber\\
      &\hspace{1cm}+\langle Pv_m(r)+2m(\|B^{\frac{1}{2}}\pi_1 v_m(r)\|^2_H)\left(\begin{array}{c} A^{-2}B\pi_1 v_m(r) \\ 0 \end{array}\right),\tilde{F}_m(r)\rangle_{\mathcal{H}}\nonumber\\
      &=\langle Pv_m(r),\mathcal{A}v_m(r)\rangle_{\mathcal{H}}+2m(\|B^{\frac{1}{2}}\pi_1 v_m(r)\|^2_H)\langle B\pi_1 v_m(r),\pi_2v_m(r)\rangle_{H}\nonumber\\
      &\hspace{1cm}+\langle Pv_m(r),\tilde{F}_m(r)(r)\rangle_{\mathcal{H}}+2m(\|B^{\frac{1}{2}}\pi_1 v_m(r)\|^2_H)\langle B\pi_1 v_m(r),\pi_1\tilde{F}_m(r)\rangle_{H}.\nonumber
\end{align}
From Lemma \ref{lem: property of operator P} and the fact that
$A\geq\mu I$ for some $\mu>0$, we have
\begin{align*}
       &\langle Pv_m(r),\mathcal{A}v_m(r)\rangle_{\mathcal{H}}\\
       &=-\beta\|A\pi_1v_m(r)\|^2_H+\beta^2\langle \pi_1v_m(r),\pi_2v_m(r)\rangle+\beta\|\pi_2v_m(r)\|_H^2\\
&\leq-\beta\|A\pi_1v_m(r)\|^2_H+\beta^2\| \pi_1v_m(r)\|_H\|\pi_2v_m(r)\|_H+\beta\|\pi_2v_m(r)\|_H^2\\
&\leq -\beta\|A\pi_1v_m(r)\|^2_H+\frac{\beta^2}{2}(\| \pi_1v_m(r)\|_H^2+\|\pi_2v_m(r)\|_H^2)+\beta\|\pi_2v_m(r)\|_H^2\\
&\leq -\beta\|A\pi_1v_m(r)\|^2_H+\frac{\beta^2}{2\mu^2}\| A\pi_1v_m(r)\|_H^2+\frac{\beta^2}{2}\|\pi_2v_m(r)\|_H^2+\beta\|\pi_2v_m(r)\|_H^2\\
&=\Big{(}\frac{\beta^2}{2\mu^2}-\beta\Big{)}\|A\pi_1v_m(r)\|^2_H+\Big{(}\frac{\beta^2}{2}+\beta\Big{)}\|\pi_2v_m(r)\|^2_H,\
r\geq 0.
\end{align*}
Recall that in the proof of Theorem \ref{theo: lifespan} we have
shown that for every $0<T<\infty$,
\begin{align}\label{eq23}
\lim_{m\rightarrow\infty}\E\sup_{0\leq t\leq
T}\|v_m(t)-v(t)\|^2_{\mathcal{H}}=0,
\end{align}
So there exists a subsequence, denoted also by $\{v_m(t)\}_{m\in\mathbb{N}}$ for simplicity, such that $v_m(t)\rightarrow v(t)$ uniformly on $[s,t]$ as $k\rightarrow\infty$ a.s.\\
Therefore,
\begin{align*}
\limsup_{m\rightarrow\infty}\langle Pv_m(r),\mathcal{A}v_m(r)\rangle_{\mathcal{H}}&\leq \limsup_{m\rightarrow\infty}\left(\frac{\beta^2}{2\mu^2}-\beta\right)\|A\pi_1v_m(r)\|^2_H+\left(\frac{\beta^2}{2}+\beta\right)\|\pi_2v_m(r)\|^2_H\\
&=\left(\frac{\beta^2}{2\mu^2}-\beta\right)\|A\pi_1v(r)\|^2_H+\left(\frac{\beta^2}{2}+\beta\right)\|\pi_2v(r)\|^2_H,\
r\in[s,t].
\end{align*}
Now by applying Fatou Lemma we infer
\begin{align*}
   &\limsup_{m\rightarrow\infty}\int_s^te^{\lambda r}\langle Pv_m(r),\mathcal{A}v_m(r)\rangle_{\mathcal{H}}dr\\
   &\leq \int_s^te^{\lambda r}\limsup_{m\rightarrow\infty}\langle \mathcal{A}v_m(r),Pv_m(r)\rangle_{\mathcal{H}}\nonumber\\
      &\leq\int_s^t\Big{[}\Big{(}\frac{\beta^2}{2\mu^2}-\beta\Big{)}\|A\pi_1v(r)\|^2_H+\Big{(}\frac{\beta^2}{2}+\beta\Big{)}\|\pi_2v(r)\|^2_H\Big{]}dr.
\end{align*}

Further, by above derivative formula of $D\Phi$ and definition of
Lyapunov function of $\Phi$ we get
\begin{align*}
   &\langle D\Phi(v_m(r)),\tilde{G}_m(r,z)\rangle_{\mathcal{H}}\\
   &\hspace{1cm}=\langle Pv_m(r),\tilde{G}_m(r,z)\rangle_{\mathcal{H}}
   +2m(\|B^{\frac{1}{2}}\pi_1v_m(r)\|_H^2)\langle  B\pi_1v_m(r),\pi_1\tilde{G}_m(r,z)\rangle_{\mathcal{H}},\ r\geq0.
\end{align*}
and
\begin{align}\label{eq20}
\begin{split}
    &\Phi(v_m(r)+\tilde{G}_m(r,z))-\Phi(v_m(r))\\
    &=\frac{1}{2}\langle     P\big{(}v_m(r)+\tilde{G}_m(r,z)\big{)},v_m(r)+\tilde{G}_m(r,z)\rangle_{\mathcal{H}}+M\big{(}\|B^{\frac{1}{2}}\pi_1\big{(}v_m(r)+\tilde{G}_m(r,z)\big{)}\|\big{)}\\
    &\hspace{1cm}-\frac{1}{2}\langle Pv_m(r),v_m(r)\rangle_{\mathcal{H}}-M(\|B^{\frac{1}{2}}\pi_1v_m(r)\|^2_H)\\
    &=\frac{1}{2}\langle Pv_m(r),\tilde{G}_m(r,z)\rangle_{\mathcal{H}}+\frac{1}{2}\langle P\tilde{G}_m(r,z),v_m(r)\rangle_{\mathcal{H}}\\
    &\hspace{1cm}+\frac{1}{2}\langle P\tilde{G}_m(r,z),\tilde{G}_m(r,z)\rangle_{\mathcal{H}}+M(\|B^{\frac{1}{2}}\pi_1(v_m(r)+\tilde{G}_m(r,z))\|^2_H)\\
    &\hspace{1cm}-M(\|B^{\frac{1}{2}}\pi_1v_m(r)\|^2_H)\\
    &=\langle Pv_m(r),\tilde{G}_m(r,z)\rangle_{\mathcal{H}}+\frac{1}{2}\langle P\tilde{G}_m(r,z),\tilde{G}_m(r,z)\rangle_{\mathcal{H}}\\
    &\hspace{1cm}+M(\|B^{\frac{1}{2}}\pi_1(v_m(r)+\tilde{G}_m(r,z))\|^2_H)-M(\|B^{\frac{1}{2}}\pi_1v_m(r)\|^2_H),\ r\geq0.
    \end{split}
\end{align}
Combining above tow equalities, we find
\begin{align*}
     &\Phi(v_m(r)+\tilde{G}_m(r,z))-\Phi(v_m(r))-\langle D\Phi(v_m(r)),\tilde{G}_m(r,z)\rangle_{\mathcal{H}}\\
&\hspace{1cm}=\frac{1}{2}\langle P\tilde{G}_m(r,z),\tilde{G}_m(r,z)\rangle_{\mathcal{H}}+M(\|B^{\frac{1}{2}}\pi_1(v_m(r)+\tilde{G}_m(r,z))\|^2_H)\nonumber\\
    &\hspace{2cm}-M(\|B^{\frac{1}{2}}\pi_1v_m(r)\|^2_H)-2m(\|B^{\frac{1}{2}}\pi_1v_m(r)\|_H^2)\langle  B\pi_1v_m(r),\pi_1\tilde{G}_m(r,z)\rangle_{\mathcal{H}},
\end{align*}
which converges $\mathbb{P}$-a.s. to
\begin{align*}
      \Phi(v(r)+\tilde{G}(r,z))-\Phi(v(r))-\langle D\Phi(v(r)),\tilde{G}(r,z)\rangle_{\mathcal{H}}=\frac{1}{2}\langle P\tilde{G}(r,z),\tilde{G}(r,z)\rangle_{\mathcal{H}},
\end{align*}
as $m\rightarrow\infty$, $r\geq0$.
Also, we find that
\begin{align*}
  \Phi(v_m(r)&+\tilde{G}_m(r,z))-\Phi(v_m(r))\\
  &=\langle Pv_m(r),\tilde{G}_m(r,z)\rangle_{\mathcal{H}}+\frac{1}{2}\langle P\tilde{G}_m(r,z),\tilde{G}_m(r,z)\rangle_{\mathcal{H}}\nonumber\\
    &\hspace{1cm}+M(\|B^{\frac{1}{2}}\pi_1(v_m(r)+\tilde{G}_m(r,z))\|^2_H)-M(\|B^{\frac{1}{2}}\pi_1v_m(r)\|^2_H),\ r\geq0.
\end{align*}
This converges $\mathbb{P}$-a.s. to
\begin{align*}
\Phi(v(r)+\tilde{G}(r,z))-\Phi(v(r))=\langle
Pv(r),\tilde{G}(r,z)\rangle_{\mathcal{H}}+\frac{1}{2}\langle
P\tilde{G}(r,z),\tilde{G}(r,z)\rangle_{\mathcal{H}},
\end{align*}
as $m\rightarrow\infty$, $r\geq0$.
On the other hand, since the function $\Phi$ is in
$\mathcal{C}^2(\mathcal{H})$, by the Taylor formula we infer that
\begin{align*}
       \Phi(v_m(r)+\tilde{G}_m(r,z))-\Phi(v_m(r))&\leq \sup_{x\in X}\|D \Phi(x)\|\|\tilde{G}_m(r,z)\|_{\mathcal{H}},\ r\in[s,t]
\end{align*}
and
\begin{align*}
       \Phi(v_m(r) +\tilde{G}_m(r,z))-\Phi(v_m(r))&- \langle D\Phi(v_m(r)),\tilde{G}_m(r,z)\rangle_{\mathcal{H}}\\
       &\leq \frac{1}{2}\|D^2\Phi(v_m(r))\|\|\tilde{G}_m(r,z)\|^2_{\mathcal{H}}\\
&\leq \frac{1}{2}\sup_{x\in X}\|D^2
\Phi(x)\|\|\tilde{G}_m(r,z)\|^2_{\mathcal{H}},
\end{align*}
where we used the uniformly boundedness of
$\{v_m\}_{m\in\mathbb{N}}$ on $[s,t]$. Hence it follows from the
Lebesgue Dominated Convergence Theorem that for $0\leq s\leq t\leq
\infty$,
\begin{align*}
  \int_s^t\int_Ze^{\lambda r}\Big{[}\Phi(v_m(r)+\tilde{G}_m(r,z))-\Phi(v_m(r))-\langle D\Phi(v_m(s),\tilde{G}_m(r,z))\rangle_{\mathcal{H}}\Big{]}\nu(dz)dr\nonumber
\end{align*}
converges $\mathbb{P}$-a.s. to
\begin{align*}
  \int_s^t\int_Z\frac{e^{\lambda r}}{2}\langle P\tilde{G}(r,z),\tilde{G}(r,z)\rangle_{\mathcal{H}}\nu(dz)dr.
\end{align*}
On the basis of the It\^{o} isometry for stochastic integral w.r.t.
compensated Poisson random measure, we obtain
\begin{align*}
        \E\Big{\|}\int_s^{t}\int_Z&e^{\lambda t}\Big{[}\Phi(v_m(r-)+\tilde{G}_m(r,z))-\Phi(v_m(r-))\Big{]}\tilde{N}(dr,dz)\\
&-\int_s^{t}\int_Ze^{\lambda t}\Big{[}\langle Pv(r-),\tilde{G}(r,z)\rangle_{\mathcal{H}}+\frac{1}{2}\langle P\tilde{G}(r,z),\tilde{G}(r,z)\rangle_{\mathcal{H}}\Big{]}\tilde{N}(dr,dz)\Big{\|}^2\\
&\leq \E\int_s^t\int_Ze^{2\lambda}\Big{\|}\Phi(v_m(r)+\tilde{G}_m(r,z))-\Phi(v_m(r))-\langle Pv(r),\tilde{G}(r,z)\rangle_{\mathcal{H}}\\
&-\frac{1}{2}\langle
P\tilde{G}(r,z),\tilde{G}(r,z)\rangle_{\mathcal{H}}\Big{\|}^2_{\mathcal{H}}\nu(dz)dr.
\end{align*}
Note that the integrand on the right side of above equality is
dominated by $$2\sup_{x\in X}\|D\Phi(x)\|^2\|\tilde{G}(s,z)\|^2,$$
where $X$ is a compact set on $\mathcal{H}$. Again, by passing to
the limit as $m\rightarrow\infty$, the Lebesgue Dominated
Convergence Theorem tells us that the right-side of above
equality converges to $0$. Hence, by taking a subsequence we infer
that
\begin{align*}
\int_s^{t}\int_Z&e^{\lambda
t}\Big{[}\Phi(v_m(r-)+\tilde{G}_m(r-,z))-\Phi(v_m(r-))\Big{]}\tilde{N}(dr,dz)
\end{align*}
converges $\mathbb{P}$-a.s. to
\begin{align*}
   \int_s^{t}\int_Ze^{\lambda t}\Big{[}\langle Pv(r-),\tilde{G}(r-,z)\rangle_{\mathcal{H}}+\frac{1}{2}\langle P\tilde{G}(r-,z),\tilde{G}(r-,z)\rangle_{\mathcal{H}}\Big{]}\tilde{N}(dr,dz)\ \text{as }m\rightarrow\infty.
\end{align*}
Combining all the observations together and letting
$m\rightarrow\infty$ yields that for $0\leq s\leq  t<\infty$,  $\mathbb{P}\text{-a.s.}$,
\begin{eqnarray*}
    \Phi(v(t))e^{\lambda t}&\leq &
    \Phi(v(s))e^{\lambda s}+\int_s^te^{\lambda r}\Big{[}\lambda \Phi(v(r))+\Big{(}\frac{\beta^2}{2\mu^2}-\beta\Big{)}\|A\pi_1v(r)\|^2_H\\
    &+&\Big{(}\frac{\beta^2}{2}+\beta\Big{)}\|\pi_2v(r)\|^2_H+ 2m(\|B^{\frac{1}{2}}\pi v(r)\|^2_H)\langle B\pi_1 v(r),\pi_2v(r)\rangle_{H}\\
    &-&\langle\beta\pi_1v(r)+2\pi_2v(r),\pi_2\tilde{F}(r)\rangle_{H}\Big{]}dr
    +\int_s^t\int_Ze^{\lambda r}\|\tilde{g}(r,z)\|^2_H\nu{dz}dr
   \\ &+&\int_s^{t+}\int_Ze^{\lambda r}\Big{[}\langle \beta \pi_1v(r-)+2\pi_2v(r-),\tilde{g}(r,z)\rangle_{\mathcal{H}}+\|\tilde{g}(r,z)\|^2_H\Big{]}\tilde{N}(dr,dz).
\end{eqnarray*}
Recall that for every $n\in\mathbb{N}$,
$v(t\wedge\tau_n)=\mathfrak{u}(t\wedge\tau_n)$ $\mathbb{P}$-a.s., by
replacing $t$ by $t\wedge\tau_n$ in above inequality we have, for every $0\leq s\leq t<\infty$,
\begin{align*}
    &\Phi(\mathfrak{u}(t\wedge\tau_n))e^{\lambda (t\wedge\tau_n)}\\
    &\leq\Phi(\mathfrak{u}(s))+\int_s^{t\wedge\tau_n}e^{\lambda r}\Big{[}\lambda \Phi(\mathfrak{u}(r))+\Big{(}\frac{\beta^2}{2\mu^2}-\beta\Big{)}\|Au(r)\|^2_H+\Big{(}\frac{\beta^2}{2}+\beta\Big{)}\|u_t(r)\|^2_H\\
    &\hspace{3cm}+2m(\|B^{\frac{1}{2}}u(r)\|^2_H)\langle Bu(r),u_t(r)\rangle_{H}-\langle\beta u(r)+2u_t(r),\beta u_t(r)\rangle_{H}\\
    &\hspace{3cm}-m(\|B^{\frac{1}{2}}u(r)\|^2_H)\langle\beta u(r)+2u_t(r),Bu(r)\rangle_{H}\Big{]}dr\\
    &\hspace{0.5cm}+\int_s^{t\wedge\tau_n}\int_Ze^{\lambda r}\|g(r,\mathfrak{u}(r),z)\|_H^2\nu(dz)dr\\
    &\hspace{0.5cm}+\int_s^{t\wedge\tau_n+}\int_Ze^{\lambda r}\Big{[}\langle \beta \pi_1\mathfrak{u}(r-)+2\pi_2\mathfrak{u}(r-),g(r,\mathfrak{u}(r),z)\rangle_{\mathcal{H}}+\|g(r,\mathfrak{u}(r),z)\|^2_H\Big{]}\tilde{N}(dr,dz)
    \end{align*}
    \begin{align*}
    &=\Phi(\mathfrak{u}(s))+\int_s^{t\wedge\tau_n}e^{\lambda r}\Big{[}\lambda \Phi(\mathfrak{u}(r))+\Big{(}\frac{\beta^2}{2\mu^2}-\beta\Big{)}\|Au(r)\|^2_H+\Big{(}\frac{\beta^2}{2}-\beta\Big{)}\|u_t(r)\|^2_H\\
    &\hspace{3cm}-\beta^2\langle u(r), u_t(r)\rangle_{H}-m(\|B^{\frac{1}{2}}u(r)\|^2_H)\langle\beta u(r),Bu(r)\rangle_{H}\Big{]}dr\\
    &\hspace{0.5cm}+\int_s^{t\wedge\tau_n}\int_Ze^{\lambda r}\|g(r,\mathfrak{u}(r),z)\|^2_H\nu(dz)dr\\
    &\hspace{0.5cm}+\int_s^{t\wedge\tau_n+}\int_Ze^{\lambda r}\Big{[}\langle \beta \pi_1\mathfrak{u}(r-)+2\pi_2\mathfrak{u}(r-),g(r,\mathfrak{u}(r),z)\rangle_{\mathcal{H}}+\|\tilde{g}(r,z)\|^2_H\Big{]}\tilde{N}(dr,dz)\\
        &\leq \Phi(\mathfrak{u}(s))+\int_s^{t\wedge\tau_n}e^{\lambda r}\Big{[}\lambda \Phi(\mathfrak{u}(r))+(2C\beta^2-\beta)\|\mathfrak{u}(r)\|^2_{\mathcal{H}}\\
        &\hspace{3cm}-\beta m(\|B^{\frac{1}{2}}u(r)\|^2_H)\|B^{\frac{1}{2}}u(r)\|^2_H\Big{]}dr\\
    &\hspace{0.5cm}+\int_s^{t\wedge\tau_n}\int_Ze^{\lambda r}\|g(r,\mathfrak{u}(r),z)\|^2_H\nu{dz}dr\\
    &\hspace{0.5cm}+\int_s^{t\wedge\tau_n+}\int_Ze^{\lambda r}\Big{[}\langle \beta \pi_1\mathfrak{u}(r-)+2\pi_2\mathfrak{u}(r-),g(r,\mathfrak{u}(r),z)\rangle_{\mathcal{H}}+\|g(r,\mathfrak{u}(r),z)\|^2_H\Big{]}\tilde{N}(dr,dz),
\end{align*}
where $C=\max\{\frac{1}{2\mu^2},\frac{1}{2}\}$. Now applying part
$(3)$ of the Assumption \eqref{assu: stability} and the definition
of the function $\Phi$ yields that for $0\leq s\leq t<\infty$,
\begin{align*}
    &\Phi(\mathfrak{u}(t\wedge\tau_n))e^{\lambda (t\wedge\tau_n)}\\
    &\leq\Phi(\mathfrak{u}(s))+\int_s^{t\wedge\tau_n}e^{\lambda r}\Big{[}\lambda \Phi(\mathfrak{u}(r))+(2C\beta^2-\beta)\|\mathfrak{u}(r)\|^2_{\mathcal{H}}-\beta m(\|B^{\frac{1}{2}}u(r)\|^2_H)\|B^{\frac{1}{2}}u(r)\|^2_H\\
    &\hspace{4cm}+R^2_g\|\mathfrak{u}(r)\|^2_{\mathcal{H}}+K\Big{]}dr\\
    &\hspace{1cm}+\int_s^{t\wedge\tau_n}\int_Ze^{\lambda r}\Big{[}\langle \beta \pi_1\mathfrak{u}(r-)+2\pi_2\mathfrak{u}(r-),g(r,\mathfrak{u}(r),z)\rangle_{\mathcal{H}}+\|g(r,\mathfrak{u}(r),z)\|^2_H\Big{]}\tilde{N}(dr,dz)
               \end{align*}
           \begin{align*}
    &=\Phi(\mathfrak{u}(s))+\int_s^{t\wedge\tau_n}e^{\lambda r}\Big{[}\frac{\lambda }{2}\langle P\mathfrak{u}(r),\mathfrak{u}(r)\rangle_{\mathcal{H}}+\lambda M(\|B^{\frac{1}{2}}u(r)\|^2_H)+(R_g^2+2C\beta^2-\beta)\|\mathfrak{u}(r)\|^2_{\mathcal{H}}\\
    &\hspace{4cm}-\beta m(\|B^{\frac{1}{2}}u(r)\|^2_H)\|B^{\frac{1}{2}}u(r)\|^2_H+K\Big{]}dr\\
    &\hspace{1cm}+\int_s^{t\wedge\tau_n}\int_Ze^{\lambda r}\Big{[}\langle \beta \pi_1\mathfrak{u}(r-)+2\pi_2\mathfrak{u}(r-),g(r,\mathfrak{u}(r),z)\rangle_{\mathcal{H}}+\|g(r,\mathfrak{u}(r),z)\|^2_H\Big{]}\tilde{N}(dr,dz)\\
    &\leq\Phi(\mathfrak{u}(s))+\int_s^{t\wedge\tau_n}e^{\lambda r}\Big{[}\Big{(}\frac{\lambda }{2}\|P\|_{\mathcal{L}(H)}+R_g^2+2C\beta^2-\beta\Big{)}\|\mathfrak{u}(r)\|^2_{\mathcal{H}}\\
    &\hspace{4cm}+\big{(}\frac{\lambda}{\alpha}-\beta\big{)} m(\|B^{\frac{1}{2}}u(r)\|^2_H)\|B^{\frac{1}{2}}u(r)\|^2_H+K\Big{]}dr\\
    &\hspace{1cm}+\int_s^{t\wedge\tau_n}\int_Ze^{\lambda r}\Big{[}\langle \beta \pi_1\mathfrak{u}(r-)+2\pi_2\mathfrak{u}(r-),g(r,\mathfrak{u}(r),z)\rangle_{\mathcal{H}}+\|g(r,\mathfrak{u}(r),z)\|^2_H\Big{]}\tilde{N}(dr,dz),
\end{align*}
where in the last inequality we used the following inequality $$\langle Px,x\rangle_H\leq\|Px\|_{\mathcal{L}(H)}\|x\|^2_H.$$
Now let $n\rightarrow\infty$. Since by Theorem \ref{theo: lifespan},
$\tau_{\infty}=\infty$, we have for $0\leq s\leq t<\infty$,
\begin{align*}
    \Phi(\mathfrak{u}(t))e^{\lambda t}
    &\leq\Phi(\mathfrak{u}(s))+\int_s^{t}e^{\lambda r}\Big{[}\Big{(}\frac{\lambda }{2}\|P\|_{\mathcal{L}(H)}+R_g^2+2C\beta^2-\beta\Big{)}\|\mathfrak{u}(r)\|^2_{\mathcal{H}}\\
    &\hspace{4cm}+\big{(}\frac{\lambda}{\alpha}-\beta\big{)} m(\|B^{\frac{1}{2}}u(r)\|^2_H)\|B^{\frac{1}{2}}u(r)\|^2_H+K\Big{]}dr\\
    &\hspace{0.5cm}+\int_s^{t+}\int_Ze^{\lambda r}\Big{[}\langle \beta \pi_1v(r-)+2\pi_2v(r-),\tilde{g}(r,z)\rangle_{\mathcal{H}}+\|\tilde{g}(r,z)\|^2_H\Big{]}\tilde{N}(dr,dz).
\end{align*}
Choose $\lambda$ such that
$0<\lambda<2\|P\|^{-1}_{\mathcal{L}(H)}(\beta-2C\beta^2-R^2_g)\wedge\alpha\beta$.
It follows that
\begin{align*}
\frac{\lambda }{2}\|P\|_{\mathcal{L}(H)}+R_g^2+2C\beta^2-\beta<0\
\text{and} \ \frac{\lambda}{2}-\beta<0.
\end{align*}
Therefore, we infer that for $0\leq s\leq t<\infty$,
\begin{align}\label{eq25}
    \Phi(\mathfrak{u}(t))e^{\lambda t}\leq&\Phi(\mathfrak{u}(s))+\int_s^te^{\lambda r}Kdr\\
    &+\int_s^{t}\int_Ze^{\lambda r}\Big{[}\langle \beta \pi_1v(r-)+2\pi_2v(r-),\tilde{g}(r,z)\rangle_{\mathcal{H}}+\|\tilde{g}(r,z)\|^2_H\Big{]}\tilde{N}(dr,dz).\nonumber
\end{align}
First consider the case when $K=0$. Then equality \eqref{eq25}
becomes,
\begin{eqnarray}\nonumber \label{eq26}
  \Phi(\mathfrak{u}(t))e^{\lambda t}& \leq &\Phi(\mathfrak{u}(s))+\int_s^{t}\int_Ze^{\lambda r}\Big{[}\langle \beta \pi_1v(r-)\\
  &+&2\pi_2v(r-),\tilde{g}(r,z)\rangle_{\mathcal{H}}+\|\tilde{g}(r,z)\|^2_H\Big{]}\tilde{N}(dr,dz),\;\; 0\leq s\leq t<\infty.
\end{eqnarray}
Taking conditional expectation with respect to $\mathcal{F}_s$ to
both sides yields
\begin{eqnarray*}
      \E\big{(} \Phi(\mathfrak{u}(t))e^{\lambda t}\big{|}\mathcal{F}_s\big{)}&\leq& \E\big{(}\Phi(\mathfrak{u}(s))\big{|}\mathcal{F}_s\big{)}
      +\E\Big{(}\int_s^{t}\int_Ze^{\lambda r}\Big{[}\langle \beta\pi_1v(r-)\\
      &+&2\pi_2v(r-),\tilde{g}(r,z)\rangle_{\mathcal{H}}+\|\tilde{g}(r,z)\|^2_H\Big{]}\tilde{N}(dr,dz)\Big{|}\mathcal{F}_s\Big{)}\\
      &=&\Phi(\mathfrak{u}(s)),\ 0\leq s\leq t<\infty,
\end{eqnarray*}
where the equality follows from the measurability of $\Phi(\mathfrak{u}(s))$ with respect to $\mathcal{F}_s$ and independence of the integrals with respect to $\mathcal{F}_s$. This means that the process $\Phi(\mathfrak{u}(t))e^{\lambda t}$ is a supermartingale.\\
Take $\lambda^\ast\in(0,\lambda)$. We observe that for every
$k=0,1,2\cdots$,
\begin{align*}
    \sup_{t\in[k,k+1]}e^{\lambda^\ast t}\Phi(\mathfrak{u}(t))=\sup_{t\in[k,k+1]}e^{(\lambda^\ast-\lambda)t}e^{\lambda t}\Phi(\mathfrak{u}(t))\leq e^{(\lambda^\ast-\lambda)k}\sup_{t\in[k,k+1]}e^{\lambda t}\Phi(\mathfrak{u}(t)).
\end{align*}
Therefore,
\begin{align*}
      \mathbb{P}\Big{\{}\sup_{t\in[k,k+1]}e^{\lambda^\ast t}\Phi(\mathfrak{u}(t))\geq\E\Phi(\mathfrak{u}(0))\Big{\}}&\leq
      \mathbb{P}\Big{\{}\sup_{t\in[k,k+1]}e^{\lambda t}\Phi(\mathfrak{u}(t))\geq e^{(\lambda-\lambda^\ast)k}\E\Phi(\mathfrak{u}(0))\Big{\}}\\
      &\leq\frac{\E\big{(} e^{\lambda k}\Phi(\mathfrak{u}(k))\big{)}}{e^{(\lambda-\lambda^\ast)k}\E\Phi(\mathfrak{u}(0))}\\
      &\leq\frac{\E\Phi(\mathfrak{u}(0))}{e^{(\lambda-\lambda^\ast)k}\E\Phi(\mathfrak{u}(0))}=e^{-(\lambda-\lambda^\ast)k}
\end{align*}
By the ratio test, we know that the series
$\sum_{k=1}^{\infty}e^{-(\lambda-\lambda^\ast)k}$ is convergent. Thus
\begin{align*}
         \sum_{k=1}^{\infty}\mathbb{P}\Big{\{}\sup_{t\in[k,k+1]}e^{\lambda^\ast t}\Phi(\mathfrak{u}(t))\geq\E\Phi(\mathfrak{u}(0))\Big{\}}
         \leq\sum_{k=1}^{\infty}e^{-(\lambda-\lambda^\ast)}k<\infty.
\end{align*}
Now by applying Borel-Cantelli Theorem, we have
\begin{align*}
      \mathbb{P}\Big{(}\bigcap_{j=1}^{\infty}\bigcup_{k\geq j}\Big{\{}\sup_{t\in[k,k+1]}e^{\lambda^\ast t}\Phi(\mathfrak{u}(t))\geq \E\Phi(\mathfrak{u}(0))\Big{\}}\Big{)}=0.
\end{align*}
It follows that
\begin{align*}
     \mathbb{P}\Big{(}\bigcup_{j=1}^{\infty}\bigcap_{k\geq j}\Big{\{}\sup_{t\in[k,k+1]}e^{\lambda^\ast t}\Phi(\mathfrak{u}(t))\geq \E\Phi(\mathfrak{u}(0))\Big{\}}\Big{)}=1.
\end{align*}
Therefore, there exists $j\in\mathbb{N}$ such that for every $k\geq
j$,
\begin{align*}
    \sup_{t\in[k,k+1]}e^{\lambda^\ast}\Phi(\mathfrak{u}(t))\leq\E\Phi(\mathfrak{u}(0))\ \ \mathbb{P}\text{-a.s.}
\end{align*}
Then we can infer that for every $t\geq j$
\begin{align*}
       e^{\lambda^\ast t}\Phi(\mathfrak{u}(t))\leq\E\Phi(\mathfrak{u}(0))\ \ \mathbb{P}\text{-a.s.}.
\end{align*}
It then follows that
\begin{align*}
       \E\|\mathfrak{u}(t)\|^2_H&\leq\E\langle P\mathfrak{u}(t),\mathfrak{u}(t)\rangle_{\mathcal{H}}\\
       &\leq2\E\Big{[}\frac{1}{2}\langle P\mathfrak{u}(t),\mathfrak{u}(t)\rangle_{\mathcal{H}}+M\big{(}\|B^{\frac{1}{2}}u(t)\|_H^2\big{)}\Big{]}\\
       &=\E\Phi(\mathfrak{u}(r))\\
       &\leq2e^{-\lambda^\ast t}\E\Phi(\mathfrak{u}(0)),
\end{align*}
where the first inequality follows from part $(1)$ of Lemma
\ref{lem: property of operator P}, the last inequality follows from
above result. Also, note that
\begin{align}\label{eq27}
      \E\Phi(\mathfrak{u}(0))&=\E\left[\frac{1}{2}\langle P\mathfrak{u}(0),\mathfrak{u}(0)\rangle_{\mathcal{H}}+M(\|B^{\frac{1}{2}}u(0)\|)\right]\nonumber\\
      &=\E\Big{[}\frac{1}{2}\| P\|_{\mathcal{L}(\mathcal{H})}\|\mathfrak{u}(0)\|^2_{\mathcal{H}}+M(\|B^{\frac{1}{2}}u(0)\|)\Big{]}\nonumber\\
      &\leq \Big{(}\frac{1}{2}\| P\|_{\mathcal{L}(\mathcal{H})}+1\Big{)}\E\left[\|\mathfrak{u}(0)\|^2_{\mathcal{H}}+M(\|B^{\frac{1}{2}}u(0)\|)\right]\nonumber\\
      &=\Big{(}\frac{1}{2}\| P\|_{\mathcal{L}(\mathcal{H})}+1\Big{)}\mE(\mathfrak{u}(0)).
\end{align}
Therefore, we conclude that
\begin{align*}
     \E\|\mathfrak{u}(t)\|^2_H\leq 2\Big{(}\frac{1}{2}\| P\|_{\mathcal{L}(\mathcal{H})}+1\Big{)}e^{-\lambda^\ast t}\mE(\mathfrak{u}(0)),\ t\geq0.
\end{align*}
Set $C=\| P\|_{\mathcal{L}(\mathcal{H})}+2$. In conclusion, we find out that
\begin{align*}
    \E\|\mathfrak{u}(t)\|^2_H\leq Ce^{-\lambda^\ast t}\mE(\mathfrak{u}(0)),\ t\geq0,
\end{align*}
which shows the exponentially mean-square stable of our mild solution.\\
For the case $K\neq0$, first taking expectation to both side of
\eqref{eq25} and setting $s=0$ gives
\begin{align*}
    \E\left(\Phi(\mathfrak{u}(t))e^{\lambda t}\right)\leq\E\Phi(\mathfrak{u}(s))+\frac{K}{\lambda}\big{(}e^{\lambda t}-1\big{)},\ 0\leq s\leq t<\infty.
\end{align*}
Thus
\begin{align*}
    \E\Phi(\mathfrak{u}(t))&\leq e^{-\lambda t}\E\Phi(\mathfrak{u}(s))+\frac{K}{\lambda}\big{(}1-e^{-\lambda t}\big{)}, \ 0\leq s\leq t<\infty.
\end{align*}
By the definition of function $\Phi$, we obtain
\begin{align*}
    \E\Big{(}\frac{1}{2}\langle P\mathfrak{u}(t),\mathfrak{u}(r)\rangle_{\mathcal{H}}\Big{)}+\E(M(\|B^{\frac{1}{2}}u(r)\|^2_H))&=\E\Phi(\mathfrak{u}(t))\\
    &\leq e^{-\lambda t}\E\Phi(\mathfrak{u}(0))+\frac{K}{\lambda}\big{(}1-e^{-\lambda t}\big{)},\ t\geq0.
\end{align*}
Thus applying the inequality $\|x\|^2_H\leq\langle x,Px\rangle_H $
from Lemma \ref{lem: property of operator P} gives that
\begin{align*}
      \E\|\mathfrak{u}(t)\|^2_H\leq \E\langle \mathfrak{u}(t),P\mathfrak{u}(t)\rangle_{\mathcal{H}}&\leq 2e^{-\lambda t}\E\Phi(\mathfrak{u}(0))+\frac{2K}{\lambda}\big{(}1-e^{-\lambda t}\big{)}\\
      &\leq 2e^{-\lambda t}\E\Phi(\mathfrak{u}(0))+\frac{2K}{\lambda},\ t\geq0.
\end{align*}
It then follows from inequality \eqref{eq27} that
\begin{align*}
       \E\|\mathfrak{u}(t)\|^2_H\leq 2e^{-\lambda t}\Big{(}\frac{1}{2}\| P\|_{\mathcal{L}(\mathcal{H})}+1\Big{)}\mE(\mathfrak{u}(0))+\frac{2K}{\lambda},\ t\geq0.
\end{align*}
Therefore,
\begin{align*}
        \sup_{t\geq0}E\|\mathfrak{u}(t)\|^2_H\leq \left(\| P\|_{\mathcal{L}(\mathcal{H})}+2\right)\mE(\mathfrak{u}(0))+\frac{2K}{\lambda}<\infty,
\end{align*}
which completes our proof of of Theorem \ref{theo: stability}.
\end{proof}

\section{Appendix}
Let $X=(X(t))_{t\geq0}$ be an $\mathcal{H}$-valued process.
Let $(e^{t\mathcal{A}})_{t\in\mathbb{R}}$ be a contraction
$C_0$-group.
Let $\varphi$ be an $\mathcal{H}$-valued process belonging to
$\mathcal{M}^2_{loc}(\hat{\mathcal{P}};\mathcal{H})$. Set
\begin{align*}
     I(t)&=\int_0^{t}\int_Ze^{(t-s)\mathcal{A}}\varphi(s,z)\tilde{N}(ds,dz),\ t\geq0,\\
     I_{\tau}(t)&=\int_0^{t}\int_Z1_{[0,\tau]}(s)e^{(t-s)\mathcal{A}}\varphi(s,z)\tilde{N}(ds,dz),\ t\geq0.
\end{align*}
By the choice of process $\varphi$,  Proposition
\ref{Lem:predictability} and the assumption about
$(e^{t\mathcal{A}})_{t\in\mathbb{R}}$, the stochastic convolution process $I(t)$,
$t\geq0$, is well defined. Also for any stopping time $\tau$, the process
$1_{[0,\tau]}(t,\omega)$ is predictable. In fact, the predictable
$\sigma$-field is generated by the family of closed stochastic
intervals $\{[0,T]:T\text{ is a stopping time}\}$, see \cite{[Metivier_1982]}. This together with the predictability of
$\varphi$ and Proposition \ref{Lem:predictability} implies that
integrand of $I_{\tau}(t)$ is predictable. Thus the stochastic
convolution $I_{\tau}(t)$ is well defined as well. Moreover, one can always assume that the stochastic convolution process $I(t)$, $t\geq0$ is c\`{a}dl\`{a}g, see \cite{[Brzezniak_Hau_Zhu]}. The following lemma, which was  first implicitely stated in \cite{Brz+Gat_1999} and explicitly the  Ph.D thesis \cite{[Carroll]} of Andrew Caroll, verifies the definition \eqref{locally mild solution} of a local  mild solution. The proof below is mainly based on \cite{Brz+Masl+S_2005} (which in turn was provided by Martin Ondrej\'at).

\begin{lem}\label{lem: stochastic convolution}For any stopping time $\tau$,
       \begin{align}\label{stochasitc convolution}
          e^{(t-t\wedge\tau)\mathcal{A}}I(t\wedge\tau)=I_{\tau}(t)
       \end{align}
       holds for all $t\geq 0$, $\mathbb{P}$-a.s.
\end{lem}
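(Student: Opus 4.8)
The plan is to strip the semigroup out of the stochastic convolution, so that the identity reduces to the optional stopping theorem for a genuine martingale. Since $\mathcal{A}$ generates a unitary $C_0$-group $(e^{t\mathcal{A}})_{t\in\R}$, for every $T>0$ the operators $e^{-s\mathcal{A}}$, $s\in[0,T]$, are uniformly bounded (indeed of norm $1$), so that, $\varphi$ belonging to $\mathcal{M}^2_{loc}(\hat{\mathcal{P}};\mathcal{H})$, the process
\begin{align*}
     M(t):=\int_0^t\int_Z e^{-s\mathcal{A}}\varphi(s,z)\,\tilde{N}(ds,dz),\qquad t\geq 0,
\end{align*}
is well defined and is an $\mathcal{H}$-valued, c\`adl\`ag, locally square-integrable $\mathfrak{F}$-martingale; predictability of the integrand follows from that of $\varphi$ together with the strong continuity of $s\mapsto e^{-s\mathcal{A}}$, exactly as in Proposition \ref{Lem:predictability}. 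Likewise, for a fixed stopping time $\tau$, the integrand $1_{[0,\tau]}(s)\,e^{-s\mathcal{A}}\varphi(s,z)$ is predictable (the predictable $\sigma$-field being generated by the closed stochastic intervals), so that
\begin{align*}
     M_\tau(t):=\int_0^t\int_Z 1_{[0,\tau]}(s)\,e^{-s\mathcal{A}}\varphi(s,z)\,\tilde{N}(ds,dz),\qquad t\geq 0,
\end{align*}
is a well-defined c\`adl\`ag martingale as well.

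First I would prove that $I(t)=e^{t\mathcal{A}}M(t)$ and $I_\tau(t)=e^{t\mathcal{A}}M_\tau(t)$ for all $t\geq 0$, $\mathbb{P}$-a.s. For a fixed $t$, since $e^{t\mathcal{A}}$ is a fixed bounded operator it commutes with the stochastic integral,
\begin{align*}
     I(t)&=\int_0^t\int_Z e^{(t-s)\mathcal{A}}\varphi(s,z)\,\tilde{N}(ds,dz)\\
         &=e^{t\mathcal{A}}\int_0^t\int_Z e^{-s\mathcal{A}}\varphi(s,z)\,\tilde{N}(ds,dz)=e^{t\mathcal{A}}M(t),
\end{align*}
the commutation being checked first for simple integrands and then extended by the It\^{o} isometry using $\|e^{t\mathcal{A}}\psi\|_{\mathcal{H}}\leq\|\psi\|_{\mathcal{H}}$; the same computation with $1_{[0,\tau]}(s)\varphi$ in place of $\varphi$ gives $I_\tau(t)=e^{t\mathcal{A}}M_\tau(t)$. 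Now both $t\mapsto I(t)$ and $t\mapsto e^{t\mathcal{A}}M(t)$ are c\`adl\`ag (the latter because $M$ is c\`adl\`ag and $(s,x)\mapsto e^{s\mathcal{A}}x$ is jointly continuous, the group being unitary), and two c\`adl\`ag processes coinciding at every deterministic time are indistinguishable; hence $I(t)=e^{t\mathcal{A}}M(t)$ holds for all $t\geq 0$ simultaneously, $\mathbb{P}$-a.s., and in particular remains valid when $t$ is replaced by the random time $t\wedge\tau$, i.e. $I(t\wedge\tau)=e^{(t\wedge\tau)\mathcal{A}}M(t\wedge\tau)$, $\mathbb{P}$-a.s.

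It then remains to invoke the optional stopping theorem for stochastic integrals with respect to $\tilde N$: since $1_{[0,\tau]}$ is predictable, $M_\tau(t)=M(t\wedge\tau)$ for all $t\geq 0$, $\mathbb{P}$-a.s. Combining this with the two representations above and the group law $e^{t\mathcal{A}}=e^{(t-t\wedge\tau)\mathcal{A}}e^{(t\wedge\tau)\mathcal{A}}$ gives
\begin{align*}
    I_\tau(t)&=e^{t\mathcal{A}}M_\tau(t)=e^{t\mathcal{A}}M(t\wedge\tau)\\
    &=e^{(t-t\wedge\tau)\mathcal{A}}\,e^{(t\wedge\tau)\mathcal{A}}M(t\wedge\tau)=e^{(t-t\wedge\tau)\mathcal{A}}I(t\wedge\tau),\qquad t\geq 0,\ \mathbb{P}\text{-a.s.},
\end{align*}
which is the assertion. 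The step that needs genuine care --- the main obstacle --- is upgrading the pointwise-in-$t$ identity $I(t)=e^{t\mathcal{A}}M(t)$ to one valid for all $t$ at once, so that it may legitimately be evaluated at the random time $t\wedge\tau$; this is precisely where the c\`adl\`ag modifications (available from \cite{[Brzezniak_Hau_Zhu]}) and the joint continuity of $(s,x)\mapsto e^{s\mathcal{A}}x$ are used. Pulling the bounded operator through the integral and the optional-stopping identity for $\tilde N$-integrals are then routine, being immediate for simple processes and extended by the isometry.
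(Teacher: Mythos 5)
Your argument is correct, and it takes a genuinely different route from the paper. The paper proves \eqref{stochasitc convolution} directly at the level of the convolution: first for deterministic times $\tau=a$ (using only the semigroup law $e^{(t-a)\mathcal{A}}e^{(a-s)\mathcal{A}}=e^{(t-s)\mathcal{A}}$), and then for a general $\tau$ by approximating it from above by the dyadic stopping times $\tau_n=2^{-n}([2^n\tau]+1)$ and passing to the limit, using the right-continuity of $I$, the strong continuity and contractivity of the group, and the It\^{o} isometry (plus a subsequence) to handle $I_{\tau_n}(t)\to I_\tau(t)$. You instead exploit the fact that $\mathcal{A}$ generates a unitary (in particular invertible) $C_0$-group, factor $e^{(t-s)\mathcal{A}}=e^{t\mathcal{A}}e^{-s\mathcal{A}}$, and thereby reduce the convolution to the genuine local martingale $M(t)=\int_0^t\int_Z e^{-s\mathcal{A}}\varphi(s,z)\tilde{N}(ds,dz)$; the lemma then becomes the standard stopped-integral identity $M(t\wedge\tau)=\int_0^t\int_Z 1_{[0,\tau]}(s)e^{-s\mathcal{A}}\varphi(s,z)\tilde{N}(ds,dz)$ combined with the group law, and you correctly isolate the only delicate point, namely upgrading $I(t)=e^{t\mathcal{A}}M(t)$ from a fixed-$t$ identity to indistinguishability via the c\`adl\`ag versions from \cite{[Brzezniak_Hau_Zhu]} (the same upgrade should be noted for $M_\tau=M(\cdot\wedge\tau)$, which follows identically). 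Two remarks on what each approach buys: your reduction is shorter and cleaner, but it genuinely uses the group structure (definedness of $e^{-s\mathcal{A}}$ with uniformly bounded norms), whereas the paper's dyadic argument works verbatim for any contraction $C_0$-semigroup and is therefore more portable; moreover, the stopped-integral identity you invoke, while standard (and citable from \cite{[Metivier_1982]} or \cite{[Zhu_2010]}), is itself usually proved by exactly the discrete-stopping-time approximation the paper carries out, so part of the work is relocated into the cited fact rather than eliminated.
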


\begin{proof}
We first verify it for deterministic time. Let $\tau=a$. If $t<a$,
then
\begin{align*}
     e^{(t-t\wedge a)\mathcal{A}}I(t\wedge a)=e^{(t-t)\mathcal{A}}I(t)&=I(t)=\int_0^{T}\int_Z1_{[0,t]}e^{(t-s)\mathcal{A}}\varphi(s,z)\tilde{N}(ds,dz)\\
&=\int_0^{T}\int_Z1_{[0,t]}1_{[0,a]}e^{(t-s)\mathcal{A}}\varphi(s,z)\tilde{N}(ds,dz)\\
     &=\int_0^{t}\int_Z1_{[0,a]}e^{(t-s)\mathcal{A}}\varphi(s\wedge a,z)\tilde{N}(ds,dz)=I_{a}(t),
\end{align*}
where we used in the equality the fact that
$1_{[0,a]}(s)\varphi(s,z)=1_{[0,a]}(s)\varphi(s\wedge a,z)$. If
$t\geq a$, then
\begin{align*}
     e^{(t-t\wedge a)\mathcal{A}}I(t\wedge a)&=e^{(t-a)\mathcal{A}}I(a)=e^{(t-a)\mathcal{A}}\int_0^{a}\int_Z e^{(a-s)\mathcal{A}}\varphi(s,z)\tilde{N}(ds,dz)\\
     &=e^{(t-a)\mathcal{A}}\int_0^{T}\int_Z 1_{[0,a]}(s)e^{(a-s)\mathcal{A}}\varphi(s,z)\tilde{N}(ds,dz)\\
      &\hspace{2cm}+e^{(t-a)\mathcal{A}}\int_0^{T}\int_Z 1_{(a,t]}(s)1_{[0,a]}(s)e^{(a-s)\mathcal{A}}\varphi(s,z)\tilde{N}(ds,dz)\\
     &=e^{(t-a)\mathcal{A}}\int_0^{a}\int_Z 1_{[0,a]}(s)e^{(a-s)\mathcal{A}}\varphi(s,z)\tilde{N}(ds,dz)\\
     &\hspace{2cm}+e^{(t-a)\mathcal{A}}\int_a^{t}\int_Z 1_{[0,a]}(s)e^{(a-s)\mathcal{A}}\varphi(s,z)\tilde{N}(ds,dz)\\
     &=e^{(t-a)\mathcal{A}}\int_0^{t}\int_Z 1_{[0,a]}(s)e^{(a-s)\mathcal{A}}\varphi(s\wedge a,z)\tilde{N}(ds,dz)\\
     &=\int_0^{t}\int_Z 1_{[0,a]}(s)e^{(t-s)\mathcal{A}}\varphi(s,z)\tilde{N}(ds,dz)=I_{a}(t).
\end{align*}
Thus equality \eqref{stochasitc convolution} holds for any
deterministic time. Now let $\tau$ be an arbitrary stopping time.
Define $\tau_n:=2^{-n}([2^n\tau]+1)$, for each $n\in\mathbb{N}$.
That is $\tau_n=\frac{k+1}{2^n}$ if $\frac{k}{2^n}\leq
\tau<\frac{k+1}{2^n}$. Then $\tau_n$ converges down to $\tau$ as
$n\rightarrow\infty$ pointwisely. Note that the equality
\eqref{stochasitc convolution} proved above holds for each
deterministic time $k2^{-n}$. It follows that
\begin{align}\label{eq28}
    e^{(t-t\wedge\tau_n)\mathcal{A}}I(t\wedge\tau_n)&=\sum_{k=0}^{\infty}1_{\{k2^{-n}\leq\tau<(k+1)2^{-n}\}}e^{(t-t\wedge (k+1)2^{-n})\mathcal{A}}I(t\wedge (k+1)2^{-n})\nonumber\\
    &=\sum_{k=0}^{\infty}1_{\{k2^{-n}\leq\tau<(k+1)2^{-n}\}}I_{(k+1)2^{-n}}(t)=I_{\tau_n}(t).
\end{align}
Since $\tau_n$ converges down to $\tau$, so by the $\mathbb{P}$-a.s.
right-continuity  of $I(t)$, $I(t\wedge\tau_n)$ converges pointwise
on $\Omega$ to $I(t\wedge\tau)$ as $n\rightarrow\infty$ for every
$t\geq0$ $\mathbb{P}$-a.s. Also, observe that
\begin{align*}
    &\left\|e^{(t-t\wedge\tau_n)\mathcal{A}}I(t\wedge\tau_n)-e^{(t-t\wedge\tau)\mathcal{A}}I(t\wedge\tau)\right\|\\
    &\hspace{2cm}\leq
    \left\|e^{(t-t\wedge\tau_n)\mathcal{A}}\big{(}I(t\wedge\tau_n)-I(t\wedge\tau)\big{)}\right\|
    +\left\|\left(e^{(t-t\wedge\tau_n)\mathcal{A}}-e^{(t-t\wedge\tau)\mathcal{A}}\right)I(t\wedge\tau)\right\|\\
    &\hspace{2cm}\leq \|I(t\wedge\tau_n)-I(t\wedge\tau)\|+\left\|\left(e^{(t-t\wedge\tau_n)\mathcal{A}}-e^{(t-t\wedge\tau)\mathcal{A}}\right)I(t\wedge\tau)\right\|.
\end{align*}
converges to $0$ as $n\rightarrow\infty$. Thus we conclude that
$e^{(t-t\wedge\tau_n)\mathcal{A}}I(t\wedge\tau_n)$ converges to
$e^{(t-t\wedge\tau)\mathcal{A}}I(t\wedge\tau)$, for each $t\geq0$,
$\mathbb{P}$-a.s. For the term $I_{\tau_n}(t)$, by the isometry we
find out that
\begin{align*}
      \E\|I_{\tau_n}(t)-I_{\tau}(t)\|^2&=\E\left\|\int_0^{t}\int_Z\big{(}1_{[0,\tau_n]}(s)-1_{[0,\tau]}(s)\big{)}e^{(t-s)\mathcal{A}}\varphi(s,z)\tilde{N}(ds,dz)\right\|^2\\
                                       &=\E\int_0^{t}\int_Z\left\|\big{(}1_{[0,\tau_n]}(s)-1_{[0,\tau]}(s)\big{)}e^{(t-s)\mathcal{A}}\varphi(s,z)\right\|^2\nu(dz)\,ds.
\end{align*}
Recall that that $\tau_n\downarrow\tau$ as $n\rightarrow\infty$. So
$1_{[0,\tau_n]}$ converges to $1_{[0,\tau]}$ as
$n\rightarrow\infty$. Obviously, the integrand is bounded by
$\|\varphi(s,z)\|^2$ for all $n$. It then follows from dominated
convergence theorem that
\begin{align*}
   \lim_{n\rightarrow\infty}\E\|I_{\tau_n}(t)-I_{\tau}(t)\|^2\rightarrow0.
\end{align*}
Hence we can always find a subsequence which is convergent a.s.
Finally, Letting $n\rightarrow\infty$ in both sides of \eqref{eq28}
yields
\begin{align*}
         e^{(t-t\wedge\tau)\mathcal{A}}I(t\wedge\tau)=I_{\tau}(t)
\end{align*}
which completes our proof.

\end{proof}
\begin{remark}
Note in particular that if we replace $t$ by $t\wedge\tau$ in
\eqref{stochasitc convolution}, we obtain
\begin{align*}
       I(t\wedge\tau)=I_{\tau}(t\wedge\tau).
\end{align*}
\end{remark}

\begin{acknowledgements}
The authors would like to thank Professor Pao-Liu Chow and Professor  Jerzy Zabczyk for checking over an earlier draft and providing comments that greatly improved the manuscript. The research of the second named
author was partially supported by an ORS award at the University of
York.
\end{acknowledgements}


\end{document}